\renewcommand{\@secnumfont}{\bfseries}
\def\subsection{\@startsection{subsection}{3}%
  \z@{.5\linespacing\@plus.7\linespacing}{.2\linespacing}%
  {\normalfont\bfseries}}
\renewcommand{\paragraph}[1]{\ \\ \textbf{#1}}
\newtheorem{theorem}{Theorem}
\newtheorem{lemma}{Lemma}
\newtheorem{assumption}{Assumption}
\newtheorem{corollary}{Corollary}
\newcommand{\esp}[1]{\mathbb{E}\left[ #1 \right]}
\newcommand{\grando}[1]{O\left(#1 \right)}
\newcommand{\adfs}{ADFS}
\newcommand{\esdacd}{ESDACD}
\newcommand{\msda}{MSDA}
\newcommand{\R}{\mathbb{R}}
\title{An Accelerated Decentralized Stochastic Proximal Algorithm for Finite Sums}
\date{}
\author{
Hadrien Hendrikx $^\dagger$ 
\qquad Francis Bach \qquad Laurent Massouli\'e $^\dagger$
}
\address{INRIA - Département d’informatique de l’ENS \\
Ecole normale supérieure, CNRS, INRIA \\ 
PSL Research University, 75005 Paris, France}
\address{$\dagger$ Microsoft-INRIA Joint Centre}
\email{hadrien.hendrikx@inria.fr}
\email{francis.bach@inria.fr}
\email{laurent.massoulie@inria.fr}
\begin{document}

\maketitle

\begin{abstract}
Modern large-scale finite-sum optimization relies on two key aspects: distribution and stochastic updates. For smooth and strongly convex problems, existing decentralized algorithms are slower than modern accelerated variance-reduced stochastic algorithms when run on a single machine, and are therefore not efficient. Centralized algorithms are fast, but their scaling is limited by global aggregation steps that result in communication bottlenecks. In this work, we propose an efficient \textbf{A}ccelerated \textbf{D}ecentralized stochastic algorithm for \textbf{F}inite \textbf{S}ums named ADFS, which uses local stochastic proximal updates and randomized pairwise communications between nodes. On $n$ machines, ADFS learns from $nm$ samples in the same time it takes optimal algorithms to learn from $m$ samples on one machine. This scaling holds until a critical network size is reached, which depends on communication delays, on the number of samples $m$, and on the network topology. We provide a theoretical analysis based on a novel augmented graph approach combined with a precise evaluation of synchronization times and an extension of the accelerated proximal coordinate gradient algorithm to arbitrary sampling. We illustrate the improvement of ADFS over state-of-the-art decentralized approaches with experiments.
\end{abstract}

\section{Introduction}

The success of machine learning models is mainly due to their capacity to train on huge amounts of data. Distributed systems can be used to process more data than one computer can store or to increase the pace at which models are trained by splitting the work among many computing nodes. In this work, we focus on problems of the form:\vspace{-3pt}
\begin{equation}
\label{eq:distributed_problem}
    \min_{\theta \in \mathbb{R}^d}\ \  \sum_{i=1}^n f_i(\theta),\ \ \ \ \mbox{ where } \ \ \ \  f_i(\theta) = \sum_{j=1}^m f_{i,j}(\theta) +  \frac{\sigma_i}{2}\| \theta \|^2 . \! \! \! \! \! \! \! \! \!
\end{equation}
This is the typical $\ell_2$-regularized empirical risk minimization problem with $n$ computing nodes that have $m$ local training examples each. The function $f_{i,j}$ represents the loss function for the $j$-th training example of node $i$ and is assumed to be convex and $L_{i,j}$-smooth~\citep{nesterov2013introductory, bubeck2015convex}.
\begin{table*}[t]
\centering 
\begin{small}
\begin{sc}
\begin{tabular}{lcccc}
\toprule
Algorithm & Synchrony & Stochastic & Time \\
\midrule
Point-SAGA~\citep{defazio2016simple}  & N/A & \checkmark & $nm + \sqrt{n m\kappa_s}$\\
MSDA~\citep{scaman2017optimal} & Global & $\times$ & $\sqrt{\kappa_b}\left(m + \frac{\tau}{\sqrt{\gamma}}\right)$\\
ESDACD~\citep{hendrikx2018accelerated}  & Local & $\times$ & $\left(m + \tau \right)\sqrt{\frac{\kappa_b}{\gamma}}$\\
DSBA~\citep{shen2018towards}  & Global & \checkmark & $\left(m + \kappa_s + \gamma^{-1}\right) \left(1 + \tau\right)$\\
\adfs~(this paper) & Local & \checkmark & $m + \sqrt{m\kappa_s} + (1 + \tau)\sqrt{ \frac{\kappa_s}{\gamma}}$\\
\bottomrule
\end{tabular}
\end{sc}
\end{small}
\caption{Comparison of various state-of-the-art decentralized algorithms to reach accuracy $\varepsilon$ in regular graphs. Constant factors are omitted, as well as the $\log\left(\varepsilon^{-1}\right)$ factor in the \emph{\textsc{Time}} column. Reported runtime for Point-SAGA corresponds to running it on a single machine with $n m$ samples. To allow for direct comparison, we assume that computing a dual gradient of a function $f_i$ as required by MSDA and ESDACD takes time $m$, although it is generally more expensive than to compute $m$ separate proximal operators of single $f_{i,j}$ functions.} 
\label{fig:table_speeds}
\vskip -0.1in
\end{table*}
These problems are usually solved by first-order methods, and the basic distributed algorithms compute gradients in parallel over several machines~\citep{nedic2009distributed}. Another way to speed up training is to use \emph{stochastic} algorithms \citep{bottou2010large, defazio2014saga, johnson2013accelerating}, that take advantage of the finite sum structure of the problem to use cheaper iterations while preserving fast convergence. This paper aims at bridging the gap between stochastic and decentralized algorithms when local functions are smooth and strongly convex. In the rest of this paper, following Scaman et al. ~\citep{scaman2017optimal}, we assume that nodes are linked by a communication network and can only exchange messages with their neighbours. We further assume that each communication takes time~$\tau$ and that processing one sample, \emph{i.e.}, computing the proximal operator for a \emph{single} function $f_{i,j}$, takes time $1$. The proximal operator of a function $f_{i,j}$ is defined by ${\rm prox}_{\eta f_{i,j}}(x) = \arg \min_v \frac{1}{2\eta}\|v - x\|^2 + f_{i,j}(v)$. The condition number of the Laplacian matrix of the graph representing the communication network is denoted $\gamma$. This natural constant appears in the running time of many decentralized algorithms and is for instance of order $O(1)$ for the complete graph and $O(n^{-1})$ for the 2D grid. More generally, $\gamma^{-1/2}$ is typically of the same order as the diameter of the graph. Following notations from Xiao et al. \citep{xiao2017dscovr}, we define the batch and stochastic condition numbers $\kappa_b$ and $\kappa_s$ (which are classical quantities in the analysis of finite sum optimization) such that for all $i$, $\kappa_b \geq M_i / \sigma_i$ where $M_i$ is the smoothness constant of the function~$f_i$ and $\kappa_s \geq \kappa_i$, with $\kappa_i = 1 + \sum_{j=1}^m L_{i,j} / \sigma_i$ the stochastic condition number of node $i$. Although~$\kappa_s$ is always bigger than $\kappa_b$, it is generally of the same order of magnitude, leading to the practical superiority of stochastic algorithms. The next paragraphs discuss the relevant state of the art for both distributed and stochastic methods, and Table~\ref{fig:table_speeds} sums up the speeds of the main decentralized algorithms available to solve Problem~\eqref{eq:distributed_problem}. Although it is not a distributed algorithm, Point-SAGA~\citep{defazio2016simple}, an optimal single-machine algorithm, is also presented for comparison.

\paragraph{\emph{Centralized} gradient methods.}
A simple way to split work between nodes is to distribute gradient computations and to aggregate them on a parameter server. Provided the network is fast enough, this allows the system to learn from the datasets of $n$ workers in the same time one worker would need to learn from its own dataset. Yet, these approaches are very sensitive to stochastic delays, slow nodes, and communication bottlenecks. Asynchronous methods may be used~\citep{recht2011hogwild, leblond2016asaga, xiao2017dscovr} to address the first two issues, but computing gradients on older (or even inconsistent) versions of the parameter harms convergence~\citep{chen2016revisiting}. Therefore, this paper focuses on decentralized algorithms, which are generally less sensitive to communication bottlenecks~\citep{lian2017can}.\vspace{-3pt}

\paragraph{\emph{Decentralized} gradient methods.} In their synchronous versions, decentralized algorithms alternate rounds of computations (in which all nodes compute gradients with respect to their local data) and communications, in which nodes exchange information with their direct neighbors~\citep{duchi2012dual,shi2015extra, nedic2017achieving, tang2018d, he2018cola}. Communication steps often consist in averaging gradients or parameters with neighbours, and can thus be abstracted as multiplication by a so-called gossip matrix. MSDA~\citep{scaman2017optimal} is a batch decentralized synchronous algorithm, and it is optimal with respect to the constants $\gamma$ and $\kappa_b$, among batch algorithms that can only perform these two operations. Instead of performing global synchronous updates, some approaches inspired from gossip algorithms~\citep{boyd2006randomized} use randomized pairwise communications~\citep{nedic2009distributed, johansson2009randomized, colin2016gossip}. This for example allows fast nodes to perform more updates in order to benefit from their increased computing power. These randomized algorithms do not suffer from the usual worst-case analyses of bounded-delay asynchronous algorithms, and can thus have fast rates because the step-size does not need to be reduced in the presence of delays. For example, ESDACD~\citep{hendrikx2018accelerated} achieves the same optimal speed as MSDA when batch computations are faster than communications ($\tau > m$). However, both use gradients of the Fenchel conjugates of the full local functions, which are generally much harder to get than regular gradients.\vspace{-3pt}

\paragraph{Stochastic algorithms for finite sums.}
All distributed methods presented earlier are \emph{batch} methods that rely on computing \emph{full gradient} steps of each function $f_i$. Stochastic methods perform updates based on randomly chosen functions $f_{i,j}$. In the smooth and strongly convex setting, they can be coupled with \emph{variance reduction}~\citep{schmidt2017minimizing,shalev2013stochastic, johnson2013accelerating, defazio2014saga} and \emph{acceleration}, to achieve the $m + \sqrt{m\kappa_s}$ optimal finite-sum rate, which greatly improves over the $m\sqrt{\kappa_b}$ batch optimum when the dataset is large. Examples of such methods include Accelerated-SDCA~\citep{shalev2014accelerated}, APCG~\citep{lin2015accelerated}, Point-SAGA~\citep{defazio2016simple} or Katyusha~\citep{allen2017katyusha}.\vspace{-3pt}

\paragraph{Decentralized stochastic methods.} In the smooth and strongly convex setting, DSA~\citep{mokhtari2016dsa} and later DSBA~\citep{shen2018towards} are two linearly converging stochastic decentralized algorithms. DSBA uses the proximal operator of individual functions $f_{i,j}$ to significantly improve over DSA in terms of rates. Yet, DSBA does not enjoy the $\sqrt{m\kappa_s}$ accelerated rates and needs an excellent network with very fast communications. Indeed, nodes need to communicate each time they process a single sample, resulting in many communication steps. CHOCO-SGD~\citep{koloskova2019decentralized} is a simple decentralized stochastic algorithm with support for compressed communications. Yet, it is not linearly convergent and it requires to communicate between each gradient step as well. Therefore, to the best of our knowledge, there is no decentralized stochastic algorithm with accelerated linear convergence rate or low communication complexity without sparsity assumptions (\emph{i.e.}, sparse features in linear supervised learning).\vspace{-3pt}

\paragraph{ADFS.}The main contribution of this paper is a locally synchronous \textbf{A}ccelerated \textbf{D}ecentralized stochastic algorithm for \textbf{F}inite \textbf{S}ums, named ADFS. It is very similar to APCG for empirical risk minimization in the limit case $n=1$ (single machine), for which it gets the same $m + \sqrt{m\kappa_s}$ rate. Besides, this rate stays unchanged when the number of machines grows, meaning that ADFS can process $n$ times more data in the same amount of time on a network of size $n$. This scaling lasts as long as $(1 + \tau)\sqrt{\kappa_s} \gamma^{-\frac{1}{2}} < m + \sqrt{m \kappa_s}$. This means that ADFS is at least as fast as MSDA unless both the network is extremely fast (communications are faster than evaluating a single proximal operator) and the diameter of the graph is very large compared to the size of the local finite sums. Therefore, \adfs~outperforms \msda~and DSBA in most standard machine learning settings, combining optimal network scaling with the efficient distribution of optimal sequential finite-sum algorithms. Note however that, similarly to DSBA and Point-SAGA, ADFS requires evaluating ${\rm prox}_{f_{i,j}}$, which requires solving a local optimization problem. Yet, in the case of linear models such as logistic regression, it is only a constant factor slower than computing $\nabla f_{i,j}$, and it is especially much faster than computing the gradient of the conjugate of the full dual functions $\nabla f_i^*$ required by ESDACD and MSDA.
\vspace{3pt}

ADFS is based on three novel technical contributions: (i) a novel augmented graph approach which yields the dual formulation of Section~\ref{sec:model}, (ii) an extension of the APCG algorithm to arbitrary sampling that is applied to the dual problem in order to get the generic algorithm of Section~\ref{sec:alg}, and (iii) the analysis of local synchrony, which is performed in Section~\ref{sec:synch_time}. Finally, Section~\ref{sec:perfs} presents a relevant choice of parameters leading to the rates shown in  Table~\ref{fig:table_speeds}, and an experimental comparison is done in Section~\ref{sec:experiments}. A Python implementation of ADFS is also provided in supplementary material.

\section{Model and Derivations}
\label{sec:model}
\begin{figure}[!htb]
    \centering
        \centering
        \vspace{-.4cm}
          \includegraphics[width=\linewidth]{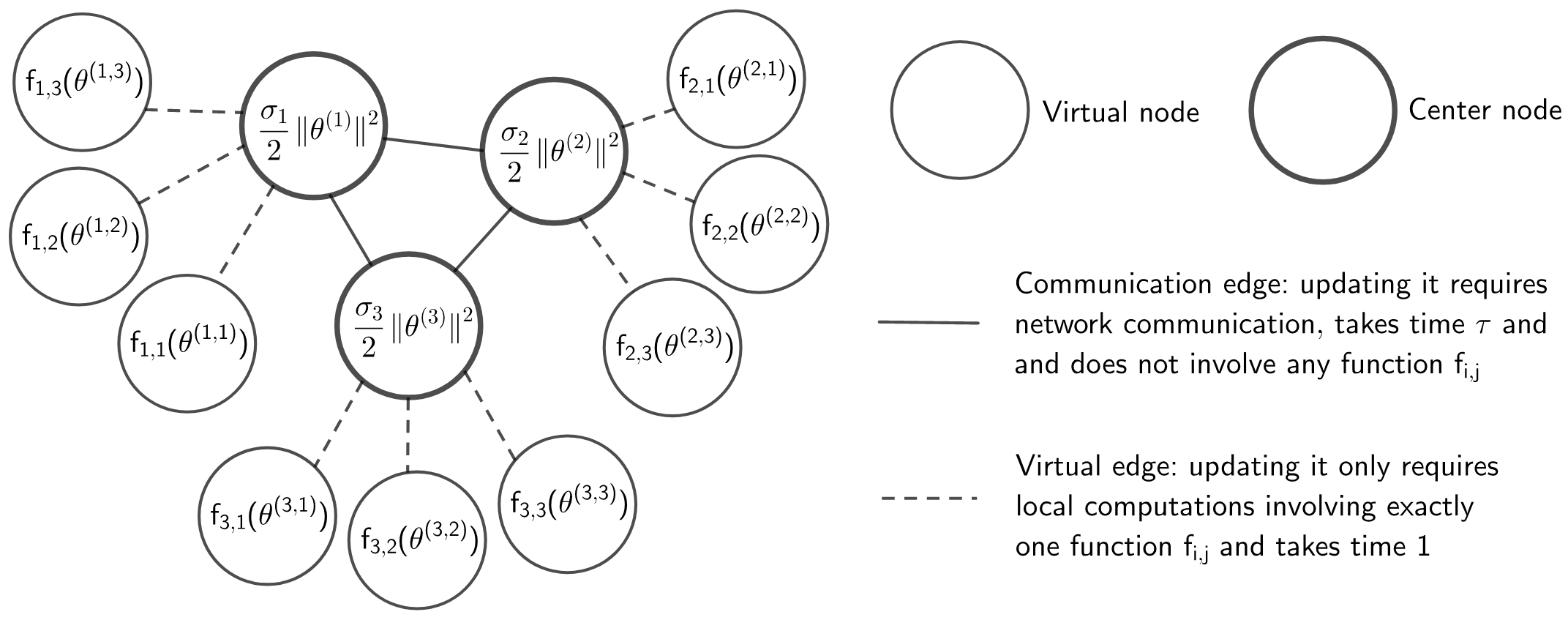}
  \vspace*{-.4cm}
\caption{Illustration of the augmented graph for $n=3$ and $m=3$.}
\label{fig:extended_graph}
\end{figure}
We now specify our approach to solve the problem in Equation~\eqref{eq:distributed_problem}. The first (classical) step consists in considering that all nodes have a local parameter, but that all local parameters should be equal because the goal is to have the global minimizer of the sum. Therefore, the problem writes: \vspace{-4pt}
\begin{equation}
    \label{eq:constr_distributed_prob}
    \min_{\theta \in \mathbb{R}^{n \times d}}\ \  \sum_{i=1}^n f_i(\theta^{(i)}) \ \  \text{ such that } \  \theta^{(i)} = \theta^{(j)} \text{ if } j \in \mathcal{N}(i),
\end{equation}
where $\mathcal{N}(i)$ represents the neighbors of node $i$ in the communication graph.
Then, \esdacd~and \msda~are obtained by applying accelerated (coordinate) gradient descent to an appropriate dual formulation of Problem~\eqref{eq:constr_distributed_prob}. In the dual formulation, constraints become variables and so updating a dual coordinate consists in performing an update along an edge of the network. In this work, we consider a new virtual graph in order to get a stochastic algorithm for finite sums. The transformation is sketched in Figure~\ref{fig:extended_graph}, and consists in replacing each node of the initial network by a star network. The centers of the stars are connected by the actual communication network, and the center of the star network replacing node~$i$ has the local function $f^{\rm comm}_i : x \mapsto \frac{\sigma_i}{2} \| x \|^2$. The center of node $i$ is then connected with $m$ nodes whose local functions are the functions $f_{i,j}$ for $j \in \{1, ..., m\}$. If we denote $E$ the number of edges of the initial graph, then the augmented graph has $n(1 + m)$ nodes and $E + nm$ edges.

Then, we consider one parameter vector $\theta^{(i,j)}$ for each function $f_{i,j}$ and one vector $\theta^{(i)}$ for each function~$f^{\rm comm}_i$. Therefore, there is one parameter vector for each node in the augmented graph. We impose the standard constraint that the parameter of each node must be equal to the parameters of its neighbors, but neighbors are now taken in the augmented graph. This yields the following minimization problem:\vspace{-5pt}
\begin{equation}
\label{eq:primal_constrained}
    \begin{split}
        \min_{\theta \in \mathbb{R}^{n(1 + m) \times d}} \ & \sum_{i=1}^n \bigg[ \ \ \sum_{j=1}^m f_{i,j}(\theta^{(i,j)}) + \frac{\sigma_i}{2} \| \theta^{(i)} \|^2 \bigg] \\
        \mbox{ such that } & {\theta^{(i)} = \theta^{(j)} \text{ if } j \in \mathcal{N}(i)}, 
       \text{ and } {\theta^{(i,j)} = \theta^{(i)} \ \ \forall j \in \{1, .., m\}}.
    \end{split}
\end{equation}

In the rest of the paper, we use letters $k,\ell$ to refer to any nodes in the augmented graph, and letters $i,j$ to specifically refer to a communication node and one of its virtual nodes. More precisely, we denote $(k, \ell)$ the edge between the nodes $k$ and $\ell$ in the augmented graph. Note that $k$ and $\ell$ can be virtual or communication nodes. We denote $e^{(k)}$ the unit vector of $\mathbb{R}^{n(1 + m)}$ corresponding to node $k$, and $e_{k\ell}$ the unit vector of $\mathbb{R}^{E + nm}$ corresponding to edge $(k,\ell)$. To clearly make the distinction between node variables and edge variables, for any matrix on the set of nodes of the augmented graph $x\in \mathbb{R}^{n(1 + m) \times d}$ we write that $x^{(k)} = x^T e^{(k)}$ for $k \in \{1, ..., n(1+m)\}$ (superscript notation) and for any matrix on the set of edges of the augmented graph $\lambda \in \mathbb{R}^{(E + nm) \times d}$ we write that $\lambda_{k\ell} = \lambda^T e_{k\ell}$ (subscript notation) for any edge $(k, \ell)$. For node variables, we use the subscript notation with a $t$ to denote time, for instance in Algorithm~\ref{algo:sc_adfs}. By a slight abuse of notations, we use indices $(i,j)$ instead of $(k,\ell)$ when specifically refering to virtual edges (or virtual nodes) and denote $\lambda_{ij}$ instead of $\lambda_{i,(i,j)}$ the virtual edge between node $i$ and node $(i,j)$ in the augmented graph. The constraints of Problem~\eqref{eq:primal_constrained} can be rewritten $A^T \theta = 0$ in matrix form, where $A \in \mathbb{R}^{n(1 + m) \times (nm + E)}$ is such that $Ae_{k\ell} = \mu_{k\ell}(e^{(k)} - e^{(\ell)})$ for some $\mu_{k\ell} > 0$. Then, the dual formulation of this problem writes:
\begin{equation}
    \max_{\lambda \in \mathbb{R}^{(nm + E) \times d}} - \sum_{i=1}^n \bigg[\sum_{j = 1}^m f_{i,j}^*\left((A\lambda)^{(i,j)}\right) + \frac{1}{2\sigma_i} \| (A\lambda)^{(i)} \|^2 \bigg],
\end{equation}
where the parameter $\lambda$ is the Lagrange multiplier associated with the constraints of Problem~\eqref{eq:primal_constrained}---more precisely, for an edge $(k,\ell)$, $\lambda_{k\ell} \in \mathbb{R}^d$ is the Lagrange multiplier associated with the constraint $\mu_{k\ell}( e^{(k)} - e^{(\ell)})^T\theta=0$. At this point, the functions $f_{i,j}$ are only assumed to be convex (and not necessarily strongly convex) meaning that the functions $f_{i,j}^*$ are potentially non-smooth. This problem could be bypassed by transferring some of the quadratic penalty from the communication nodes to the virtual nodes before going to the dual formulation. Yet, this approach fails when $m$ is large because the smoothness parameter of $f_{i,j}^*$ would scale as $m / \sigma_i$ at best, whereas a smoothness of order $1 / \sigma_i$ is required to match optimal finite-sum methods. 
A better option is to consider the $f_{i,j}^*$ terms as non-smooth and perform proximal updates on them. The rate of proximal gradient methods such as APCG~\citep{lin2015accelerated} does not depend on the strong convexity parameter of the non-smooth functions $f_{i,j}^*$. Each $f_{i,j}^*$ is $(1 / L_{i,j})$-strongly convex (because $f_{i,j}$ was $(L_{i,j})$-smooth), so we can rewrite the previous equation in order to transfer all the strong convexity to the communication node. Noting that $(A\lambda)^{(i,j)} = - \mu_{ij} \lambda_{ij}$ when node $(i,j)$ is a virtual node associated with node $i$, we rewrite the dual problem as: 
\begin{equation}
\label{eq:dual_problem}
    \min_{\lambda \in \mathbb{R}^{(E + nm) \times d}} q_A(\lambda) +  \sum_{i=1}^n \sum_{j=1}^m \tilde{f^*_{i,j}}(\lambda_{ij}),
\end{equation}
with $\tilde{f^*_{i,j}}: x \mapsto f_{i,j}^*(- \mu_{ij} x) - \frac{\mu_{ij}^2}{2L_{i,j}}\|x\|^2$
and $q_A:x \mapsto {\rm Trace}\big(\frac{1}{2}x^T A^T\Sigma^{-1} A x\big)$, where $\Sigma$ is the diagonal matrix such that ${e^{(i)}}^T\Sigma e^{(i)} = \sigma_i$ if $i$ is a center node and ${e^{(i,j)}}^T\Sigma e^{(i,j)} = L_{i,j}$ if it is the virtual node $(i,j)$. Since dual variables are associated with edges, using coordinate descent algorithms on dual formulations from a well-chosen augmented graph of constraints allows us to handle both computations and communications in the same framework. Indeed, choosing a variable corresponding to an actual edge of the network results in a communication along this edge, whereas choosing a virtual edge results in a local computation step. Then, we balance the ratio between communications and computations by simply adjusting the probability of picking a given kind of edges.

\section{The Algorithm: ADFS Iterations and Expected Error}
\label{sec:alg}

In this section, we detail our new ADFS algorithm. In order to obtain it, we introduce a generalized version of the APCG algorithm~\citep{lin2015accelerated} which we detail in Appendix~\ref{app:generalized_apcg}. Then we apply it to Problem~\eqref{eq:dual_problem} to  get Algorithm~\ref{algo:sc_adfs}. Due to lack of space, we only present the smooth version of ADFS here, but a non-smooth version is presented in Appendix~\ref{app:algo_derivations}, along with the derivations required to obtain Algorithm~\ref{algo:sc_adfs} and Theorem~\ref{thm:rate_adfs}. We denote $A^\dagger$ the pseudo inverse of $A$ and $W_{k\ell} \in \mathbb{R}^{n(1 + m) \times n(1 + m)}$ the matrix such that $W_{k\ell} = (e^{(k)} - e^{(\ell)})(e^{(k)} - e^{(\ell)})^T$ for any edge $(k, \ell)$. Note that variables $x_t$, $y_t$ and $v_t$ from Algorithm~\ref{algo:sc_adfs} are variables associated with the nodes of the augmented graph and are therefore matrices in $\mathbb{R}^{n(1 + m) \times d}$ (one row for each node). They are obtained by multiplying the dual variables of the proximal coordinate gradient algorithm applied to the dual problem of Equation~\eqref{eq:dual_problem} by $A$ on the left. We denote $\sigma_A = \lambda_{\min}^+(A^T\Sigma^{-1}A)$ the smallest non-zero eigenvalue of the matrix $A^T\Sigma^{-1}A$. 

\begin{algorithm}
\caption{ADFS$\left(A, (\sigma_i), (L_{i,j}), (\mu_{k\ell}), (p_{k\ell}), \rho \right)$}
\label{algo:sc_adfs}
\begin{algorithmic}[1]
\STATE $\sigma_A = \lambda_{\min}^+(A^T\Sigma^{-1}A)$, $\tilde{\eta}_{k\ell} = \frac{\rho \mu_{k\ell}^2}{\sigma_A p_{k\ell}}$, $R_{k\ell} = e_{k\ell}^T A^\dagger A e_{k\ell}$ \COMMENT{Initialization}
\STATE $x_0 = y_0 = v_0 = z_0 = 0^{(n + nm) \times d}$ 
\FOR[Run for $K$ iterations]{$t=0$ to $K-1$}
\STATE $y_t = \frac{1}{1 + \rho}\left(x_t + \rho v_t\right)$
\STATE Sample edge $(k,\ell)$ with probability $p_{k\ell}$ \COMMENT{Edge sampled from the augmented graph}
\STATE $z_{t+1} = v_{t+1} = (1 - \rho) v_t + \rho y_t - \tilde{\eta}_{k\ell} W_{k\ell}\Sigma^{-1}y_t$ \COMMENT{Nodes $k$ and $\ell$ communicate $y_t$}
\IF{$(k,\ell)$ is the virtual edge between node $i$ and virtual node $(i,j)$}
\STATE $v_{t+1}^{(i,j)} = {\rm prox}_{\tilde{\eta}_{ij} \tilde{f}^*_{i,j}}\left(z_{t+1}^{(i,j)}\right)$  \COMMENT{Virtual node update using $f_{i,j}$}
\STATE $v_{t+1}^{(i)} = z_{t+1}^{(i)} + z_{t+1}^{(i,j)} - v_{t+1}^{(i,j)}$ \COMMENT{Center node update}

\ENDIF
\STATE $x_{t+1} = y_t + \frac{\rho R_{k\ell}}{p_{k\ell}}(v_{t+1} - (1 - \rho) v_t - \rho y_t)$
\ENDFOR
\STATE \textbf{return} $\theta_K = \Sigma^{-1}v_K$ \COMMENT{Return primal parameter}
\end{algorithmic}
\end{algorithm}

\begin{theorem}
\label{thm:rate_adfs}
We denote $\theta^\star$ the minimizer of the primal function $F:x\mapsto \sum_{i=1}^n f_i(x)$ and $\theta^\star_A$ a minimizer of the dual function $F^*_A = q_A + \psi$. Then $\theta_t$ as output by Algorithm~\ref{algo:sc_adfs} verifies:
\begin{equation}
\label{eq:S}
    \esp{\|\theta_t - \theta^\star\|^2}
    \leq C_0 (1 - \rho)^t, \ \ \ \text{ if } \ \ \ \rho^2 \leq \min_{k\ell} \frac{\lambda_{\min}^+ (A^T \Sigma^{-1} A )}{\Sigma_{kk}^{-1} + \Sigma_{\ell \ell}^{-1}} \frac{p_{k\ell}^2}{\mu_{k\ell}^2 R_{k\ell}},
\end{equation}
with $C_0 = \lambda_{\max}(A^T\Sigma^{-2}A) \left[\|A^\dagger A\theta^\star_A\|^2 + 2 \sigma_A^{-1} \left(F^*_A(0) - F^*_A(\theta^\star_A)\right)\right]  $.
\end{theorem}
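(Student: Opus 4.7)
The plan is to identify Algorithm~\ref{algo:sc_adfs} with the generalized APCG from Appendix~\ref{app:generalized_apcg} applied to the dual problem~\eqref{eq:dual_problem}, and then transport the dual convergence guarantee into the primal bound. First I would introduce shadow iterates $\bar x_t,\bar y_t,\bar v_t\in\mathbb{R}^{(E+nm)\times d}$ in the edge space and check by induction that $x_t=A\bar x_t$, $y_t=A\bar y_t$, $v_t=A\bar v_t$. The identity $Ae_{k\ell}=\mu_{k\ell}(e^{(k)}-e^{(\ell)})$ turns the coordinate gradient $e_{k\ell}e_{k\ell}^T A^T\Sigma^{-1}A\bar y_t$, after left multiplication by $A$, into $W_{k\ell}\Sigma^{-1}y_t$ up to the scalar $\mu_{k\ell}^2$ absorbed into $\tilde\eta_{k\ell}$, matching line~6; similarly the proximal step on a virtual edge $\bar v_{ij}$ corresponds to the joint update on $v^{(i,j)}$ and $v^{(i)}$ on lines~8--9. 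The extrapolation on line~11 mirrors the APCG extrapolation, with the factor $R_{k\ell}=e_{k\ell}^T A^\dagger A e_{k\ell}$ accounting for the projection onto $\mathrm{range}(A^T)$ that the analysis requires.

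I would then invoke the arbitrary-sampling APCG convergence theorem on $F_A^*=q_A+\psi$. A direct computation gives the coordinate Lipschitz constant $L_{k\ell}=e_{k\ell}^T A^T\Sigma^{-1}A e_{k\ell}=\mu_{k\ell}^2(\Sigma_{kk}^{-1}+\Sigma_{\ell\ell}^{-1})$, while $q_A$ is $\sigma_A$-strongly convex on $\mathrm{range}(A^T)$ with $\sigma_A=\lambda_{\min}^+(A^T\Sigma^{-1}A)$, and the separable part $\psi(\bar v) = \sum_{i,j}\tilde f_{i,j}^*(\bar v_{ij})$ is handled via coordinate proximal updates. The generic step-size condition for the generalized APCG then reads
\begin{equation*}
\rho^2 \leq \frac{\sigma_A}{L_{k\ell}R_{k\ell}}\,p_{k\ell}^2 \qquad \text{for every edge } (k,\ell),
\end{equation*}
which is exactly the hypothesis in~\eqref{eq:S}. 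Under this condition, a Lyapunov argument of the form
\begin{equation*}
\Psi_t = \tfrac{\sigma_A}{2}\bigl\|\bar v_t - A^\dagger A \theta_A^\star\bigr\|^2 + F_A^*(\bar v_t) - F_A^*(\theta_A^\star)
\end{equation*}
(together with the usual APCG auxiliary sequences) yields $\esp{\Psi_t}\le(1-\rho)^t \Psi_0$, and since $\bar v_0=0$ the initialization satisfies $\Psi_0 \le \tfrac{\sigma_A}{2}\|A^\dagger A \theta_A^\star\|^2 + F_A^*(0)-F_A^*(\theta_A^\star)$.

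To conclude, I would translate the dual bound into the stated primal guarantee. Primal-dual stationarity for~\eqref{eq:primal_constrained}--\eqref{eq:dual_problem} gives $\theta^\star = \Sigma^{-1}A\theta_A^\star$ at every node, and the algorithm output is $\theta_t=\Sigma^{-1}v_t=\Sigma^{-1}A\bar v_t$. Hence
\begin{equation*}
\|\theta_t-\theta^\star\|^2 = \bigl\|\Sigma^{-1}A\bigl(\bar v_t - A^\dagger A\theta_A^\star\bigr)\bigr\|^2 \le \lambda_{\max}(A^T\Sigma^{-2}A)\,\bigl\|\bar v_t - A^\dagger A\theta_A^\star\bigr\|^2,
\end{equation*}
where the projection $A^\dagger A$ eliminates the $\ker A$ ambiguity without changing $\Sigma^{-1}A\theta_A^\star$. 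Combining this operator-norm inequality with $\Psi_t \ge \tfrac{\sigma_A}{2}\|\bar v_t-A^\dagger A\theta_A^\star\|^2$ and the geometric decay of $\esp{\Psi_t}$ delivers the claimed rate with $C_0=\lambda_{\max}(A^T\Sigma^{-2}A)\bigl[\|A^\dagger A\theta_A^\star\|^2 + 2\sigma_A^{-1}(F_A^*(0)-F_A^*(\theta_A^\star))\bigr]$.

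The chief difficulty is the rank-deficiency of the quadratic $q_A$: because $A^T\Sigma^{-1}A$ has a non-trivial kernel (the consensus directions in the primal), strong convexity is available only on $\mathrm{range}(A^T)$, and the whole argument must be carried out modulo $\ker A$ by inserting the pseudo-inverse $A^\dagger$ in the appropriate places. This is what forces the appearance of $\sigma_A=\lambda_{\min}^+$ and of $R_{k\ell}=e_{k\ell}^T A^\dagger A e_{k\ell}$ in the step-size condition, and is precisely the reason a standard APCG theorem does not apply off the shelf, motivating the generalized version in Appendix~\ref{app:generalized_apcg}. Verifying that all iterates $\bar v_t$ remain in $\mathrm{range}(A^T)$, so that the quadratic-growth bound involving $\sigma_A$ stays valid, and that both the coordinate-gradient and the coordinate-proximal steps preserve this subspace, is the key technical point that makes the reduction go through.
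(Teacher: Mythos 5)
Your approach is essentially the paper's: interpret ADFS via shadow iterates $\bar v_t,\bar x_t,\bar y_t$ in edge space, compute the coordinate smoothness $M_{k\ell}=\mu_{k\ell}^2(\Sigma_{kk}^{-1}+\Sigma_{\ell\ell}^{-1})$ and the restricted strong convexity $\sigma_A=\lambda_{\min}^+(A^T\Sigma^{-1}A)$, invoke the strongly convex corollary of generalized APCG with arbitrary sampling, and transport the dual distance bound into the primal via $\theta^\star=\Sigma^{-1}A\theta^\star_A$ and the operator inequality $\|\Sigma^{-1}Az\|^2 = \|\Sigma^{-1}A(A^\dagger A z)\|^2\le\lambda_{\max}(A^T\Sigma^{-2}A)\,\|z\|^2_{A^\dagger A}$. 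The identification of the step-size condition and the constant $C_0$ are both correct.

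There is, however, a conceptual slip in your closing paragraph. Generalized APCG (Corollary~\ref{corr:sc_apcg}) controls the \emph{seminorm} $\|\bar v_t-\theta^\star_A\|^2_{A^\dagger A}$, not the Euclidean norm $\|\bar v_t-A^\dagger A\theta^\star_A\|^2$ that you place in your Lyapunov function; the two coincide only if $\bar v_t\in\mathrm{range}(A^T)$. You assert that proving this range invariance ``is the key technical point that makes the reduction go through,'' but that invariance in fact fails in general: a coordinate gradient step along a communication edge $(k,\ell)$ perturbs $\bar v_t$ in the direction $e_{k\ell}$, which is not contained in $\mathrm{range}(A^T)$ whenever $(k,\ell)$ belongs to a cycle (indeed $R_{k\ell}<1$ there). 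Fortunately no such invariance is needed: the primal translation uses $A\bar v_t = A(A^\dagger A\bar v_t)$, so the seminorm bound suffices, and the paper's proof of Theorem~\ref{thm:gen_apcg} is carried out entirely in the $\|\cdot\|_{A^\dagger A}$ seminorm precisely to sidestep this. That is the actual mechanism by which the rank-deficiency of $q_A$ is handled --- not subspace preservation of the iterates. Replacing your plain-norm Lyapunov with the seminorm version makes the argument match the paper exactly.
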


We discuss several aspects related to the implementation of Algorithm~\ref{algo:sc_adfs} below, and provide its Python implementation in supplementary material.

\paragraph{Convergence rate.} The parameter $\rho$ controls the convergence rate of ADFS. It is defined by the minimum of the individual rates for each edge, which explicitly depend on parameters related to the functions themselves ($1 / (\Sigma_{kk}^{-1} + \Sigma_{\ell\ell}^{-1}) )$, to the graph topology ($R_{k\ell} = e_{k\ell}^TA^\dagger A e_{k\ell}$), to a mix of both ($\lambda_{\min}^+ (A^T \Sigma^{-1} A ) / \mu_{k\ell}^2$) and to the sampling probabilities of the edges ($p_{k\ell}^2$). Note that these quantities are very different depending on whether edges are virtual or not. In Section~\ref{sec:perfs}, we carefully choose the free parameters $\mu_{k\ell}$ and $p_{k\ell}$ to get the best convergence speed.

\paragraph{Sparse updates.}
Although the updates of Algorithm~\ref{algo:sc_adfs} involve all nodes of the network, it is actually possible to implement them efficiently so that only two nodes are actually involved in each update, as described below. Indeed, $W_{k\ell}$ is a very sparse matrix so $\left(W_{k\ell} \Sigma^{-1}y_t\right)^{(k)} = \mu_{k\ell}^2 (\Sigma_k^{-1} y_t^{(k)} - \Sigma_\ell^{-1} y_t^{(\ell)}) = - \left(W_{k\ell} \Sigma^{-1}y_t\right)^{(\ell)}$ and $\left(W_{k\ell} \Sigma^{-1}y_t\right)^{(h)} = 0$ for $h \neq k,\ell$. Therefore, only the following situations can happen:
\begin{enumerate}
    \item \textbf{Communication updates}: If $(k,\ell)$ is a communication edge, the update only requires nodes $k$ and $\ell$ to exchange parameters and perform a weighted difference between them.
    \item \textbf{Local updates}: If $(k,\ell)$ is the virtual edge between node $i$ and its $j$-th virtual node,
    parameters exchange of line 4 is local, and the proximal term involves function $f_{i,j}$ only.\vspace{-3pt}
    \item \textbf{Convex combinations}: If we choose $h \neq k,\ell$ then $v_{t+1}^{(h)}$ and $y_{t+1}^{(h)}$ are obtained by convex combinations of $y_t^{(h)}$ and $v_t^{(h)}$ so the update is cheap and local. Besides, nodes actually need the value of their parameters only when they perform updates of type 1 or 2. Therefore, they can simply store how many updates of this type they should have done and perform them all at once before each communication or local update.  
\end{enumerate}

\paragraph{Primal proximal step.}
Algorithm~\ref{algo:sc_adfs} uses proximal steps performed on $\tilde{f}^*_{i,j}: x \rightarrow f_{i,j}^*(-\mu_{i,j}x) - \frac{\mu_{ij}^2}{2L_{i,j}}\|x\|^2$ instead of $f_{i,j}$. Yet, it is possible to use Moreau identity to express ${\rm prox}_{\eta \tilde{f}^*_{i,j}}$ using only the proximal operator of $f_{i,j}$, which can easily be evaluated for many objective functions. Note however that this may require choosing a smaller value for $\rho$. The exact derivations are presented in Appendix~\ref{app:algo_derivations_primal_prox}.

\paragraph{Linear case.}
For many standard machine learning problems, $f_{i,j}(\theta) = \ell(X_{i,j}^T\theta)$ with $X_{i,j} \in \mathbb{R}^d$. This implies that $f_{i,j}^*(\theta) = + \infty$ whenever $\theta \notin {\rm Vec}\left(X_{i,j}\right)$. Therefore, the proximal steps on the Fenchel conjugate only have support on $X_{i,j}$, meaning that they are one-dimensional problems that can be solved in constant time using for example the Newton method when no analytical solution is available. Warm starts (initializing on the previous solution) can also be used for solving the local problems even faster so that in the end, a one-dimensional proximal update is only a constant time slower than a gradient update. Note that this also allows to store parameters $v_t$ and $y_t$ as  scalar coefficients for virtual nodes, thus greatly reducing the memory footprint of ADFS.

\section{Distributed Execution and Synchronization Time}
\label{sec:synch_time}
Theorem~\ref{thm:rate_adfs} gives bounds on the expected error after a given number of iterations. To assess the actual speed of the algorithm, it is still required to know how long executing a given number of iterations takes. This is easy with synchronous algorithms such as MSDA or DSBA, in which all nodes iteratively perform local updates or communication rounds. In this case, executing $n_{\rm comp}$ computing rounds and $n_{\rm comm}$ communication rounds simply takes time $n_{\rm comp} + \tau n_{\rm comm}$. ADFS relies on randomized pairwise communications, so it is necessary to sample a \emph{schedule}, \emph{i.e.}, a random sequence of edges from the augmented graph, and evaluate how fast this schedule can be executed. Note that the execution time crucially depends on how many edges can be updated in parallel, which itself depends on the graph and on the random schedule sampled.

\begin{figure}[!htb]
    \centering
        \centering
          \includegraphics[width=\linewidth]{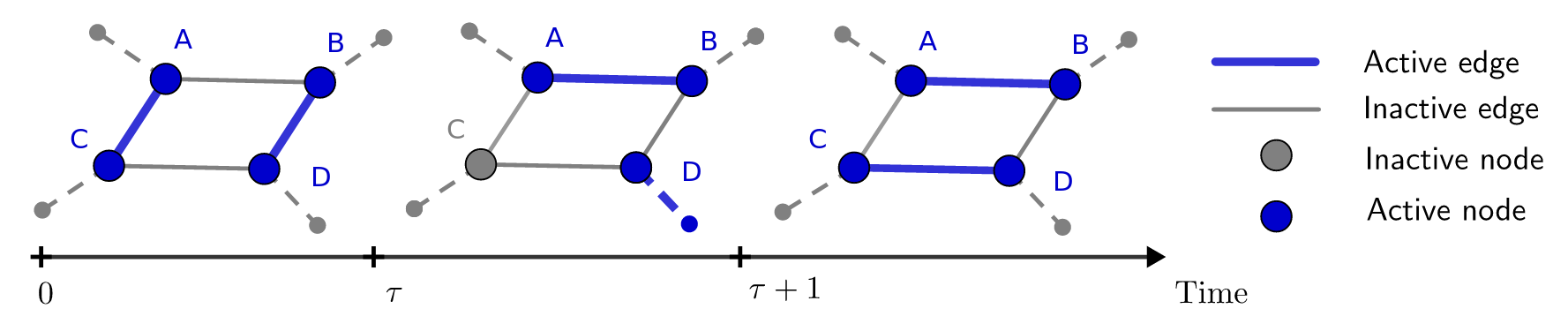}
  \vspace*{-.2cm}
\caption{Illustration of parallel execution and local synchrony. Nodes from a toy graph execute the schedule $[(A,C)$, $(B,D)$, $(A,B)$, $(D)$, $(C,D)]$, where $(D)$ means that node $D$ performs a local update. Each node needs to execute its updates in the partial order defined by the schedule. In particular, node $C$ has to perform update $(A,C)$ and then update $(C,D)$, so it is idle between times $\tau$ and $\tau + 1$ because it needs to wait for node $D$ to finish its local update before the communication update $(C,D)$ can start. We assume $\tau > 1$ since the local update terminates before the communication update $(A,B)$. Contrary to synchronous algorithms, no global notion of rounds exist and some nodes (such as node $D$) perform more updates than others.}
\label{fig:local_synchrony}
\end{figure}

\paragraph{Shared schedule.} Even though they only actively take part in a small fraction of the updates, all nodes need to execute the same schedule to correctly implement Algorithm~\ref{algo:sc_adfs}. To generate this shared schedule, all nodes are given a seed and the sampling probabilities of all edges. This allows them to avoid deadlocks and to precisely know how many convex combinations to perform between $v_t$ and $y_t$.

\paragraph{Execution time.}
The problem of bounding the probability that a random schedule of fixed length exceeds a given execution time can be cast in the framework of fork-join queuing networks with blocking~\citep{zeng2018throughput}. In particular, queuing theory~\citep{baccelli1992synchronization} tells us that the average time per iteration exists for any fixed probability distribution over a given augmented graph. Unfortunately, existing quantitative results are not precise enough for our purpose so we generalize the method introduced by Hendrikx et al.~\citep{hendrikx2018accelerated} to get a finer bound. While their result is valid when the only possible operation is communicating with a neighbor, we extend it to the case in which nodes can also perform local computations. For the rest of this paper, we denote $p_{\rm comm}$ the probability of performing a communication update and $p_{\rm comp}$ the probability of performing a local update. They are such that $p_{\rm comp} + p_{\rm comm} = 1$. We also define $p^{\max}_{\rm comm} = n \max_k \sum_{\ell \in \mathcal{N}(k)}p_{k\ell} / 2$, where neighbors are in the communication network only. When all nodes have the same probability to participate in an update, $p^{\max}_{\rm comm} = p_{\rm comm}$. Then, the following theorem holds (see proof in Appendix~\ref{appendix:average_time}):
\begin{theorem}
\label{thm:synchronization_cost}
Let $T(t)$ be the time needed for the system to execute a schedule of size $t$, \emph{i.e.}, $t$ iterations of Algorithm~\ref{algo:sc_adfs}. If all nodes perform local computations with probability $p_{\rm comp} / n$ with $p_{\rm comp} > p^{\max}_{\rm comm}$ or if $\tau > 1$ then there exists $C < 24$ such that:\vspace{-3pt}
\begin{equation}
    \mathbb{P}\bigg( \frac{1}{t}T(t) \leq \frac{C}{n}\big(p_{\rm comp} + 2 \tau p^{\max}_{\rm comm}\big) \bigg) \rightarrow 1 \mbox{ as } t \rightarrow \infty
\end{equation}
\end{theorem}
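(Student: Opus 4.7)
The plan is to follow the fork-join queuing / critical-path approach used by Hendrikx et al.~in the communication-only setting, and extend it to a schedule that now mixes two kinds of tasks: local updates (a single node, duration~$1$) and communication updates (two adjacent nodes, duration~$\tau$). The primitive object is, for every node $k$, the ordered subsequence of the random schedule in which $k$ participates; $k$ must execute these in order, each comm update being blocked until its other endpoint is ready. Let $T_k(t)$ denote the finishing time of node $k$ once the first $t$ iterations of the schedule have been dispatched, and $T(t) = \max_k T_k(t)$. The lower envelope is the per-node workload
\begin{equation*}
W_k(t) = N_k^{\rm loc}(t) + \tau N_k^{\rm com}(t),
\end{equation*}
where $N_k^{\rm loc}(t)$ and $N_k^{\rm com}(t)$ count the local and communication updates in the first $t$ iterations involving $k$.

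First, I would dispatch the "workload" side. The counts $N_k^{\rm loc}(t)$ and $N_k^{\rm com}(t)$ are sums of i.i.d.~indicators, so the strong law of large numbers yields, with probability tending to $1$, $N_k^{\rm loc}(t)/t \to p_{\rm comp}/n$ and $N_k^{\rm com}(t)/t \to \sum_{\ell\in\mathcal{N}(k)} p_{k\ell}$. Taking the maximum over $k$ and using the definition of $p^{\max}_{\rm comm}$, $\max_k W_k(t)/t$ converges to at most $(p_{\rm comp} + 2\tau p^{\max}_{\rm comm})/n$. This gives the inner quantity in the probability statement up to the constant~$C$, and trivially $T(t) \ge \max_k W_k(t)$.

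The core work is to prove $T(t) \le C \max_k W_k(t)$ for some $C<24$ with probability tending to $1$. I would generalize the construction of Hendrikx et al.: for each pair of adjacent nodes in the communication graph, maintain a lag (a backlog between their positions in the schedule) and analyse the sequence of "active" versus "idle" intervals along the execution of the busiest node. The key combinatorial claim is a charging argument: any interval during which some node is idle can be charged, up to a bounded multiplicative constant, to either a concurrent communication update at a neighbour (absorbed by its own duration $\tau$) or to a concurrent local update elsewhere in the graph (which contributes to $p_{\rm comp}$). Summing the charges shows that the accumulated idle time at the busiest node is dominated, up to a constant, by the total work done at the busiest node, giving $T(t) \le C \max_k W_k(t)$.

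The hard part will be executing this charging argument cleanly in the mixed regime, and this is exactly where the dichotomy $p_{\rm comp}>p^{\max}_{\rm comm}$ or $\tau>1$ enters. In the original Hendrikx et al.~setting all tasks were communication updates, so any idle interval could be charged directly to the overlapping two-endpoint task; here one must show that local updates either constitute useful work at the idle node itself (absorbing waiting), or free a neighbour quickly enough that the communication backlog does not cascade. When $p_{\rm comp} > p^{\max}_{\rm comm}$, local updates are sufficiently frequent to "break" long communication chains and drain the lag; when $\tau>1$, every comm update carries enough duration to swallow any delay caused by a preceding local update, and the comm-only analysis of Hendrikx et al.~extends essentially verbatim with a slightly worse constant. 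Tracking the constants through the two cases separately, bounding each $\max_k W_k(t)$ by its almost-sure limit plus a vanishing concentration error, and taking $t\to\infty$ yields the stated probability bound with $C<24$.
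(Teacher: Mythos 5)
Your proposal takes a genuinely different route from the paper, and it contains a real gap at the crux of the argument.

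The paper does not run a pathwise charging argument on idle intervals. Instead, it encodes the entire causal structure of the schedule in a family of counting variables $X^t(k,w)$ that track, for each node $k$ and each candidate completion time $w$, how many causal chains from time $0$ to iteration $t$ terminate at $k$ with total delay $w$. The finishing time of node $k$ is then $\max\{w : X^t(k,w) > 0\}$, so by integrality and a union bound, $\mathbb{P}(T(t) \geq \nu t) \leq \sum_{w\ge \nu t}\sum_k \mathbb{E}[X^t(k,w)]$. Because the recursion for $X^t$ is \emph{linear} (a communication on $(k,\ell)$ sets $X^{t+1}(k,w) = X^t(k,w-\tau) + X^t(\ell,w-\tau)$, a local step shifts, otherwise identity), the expectations obey a tractable recursion; the proof then dominates the generating function $\sum_w z^w\mathbb{E}[X^t(\cdot,w)]$ by $(1+\delta)^t$ times a product of two binomial generating functions (one for communication steps at rate $p_c = 4\bar p_{\rm comm}$ with delay $\tau$, one for local steps at rate $p_l = \bar p_{\rm comp}$ with delay $1$), and closes with Chernoff bounds. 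The hypothesis $p_{\rm comp} \ge p_{\rm comm}^{\max}$ is used precisely to make $\delta$ small enough that $(1+\delta)^t$ is beaten by the Chernoff exponent, and the constants $6 p_c \tau = 24\, \bar p_{\rm comm}\tau$ and $9 p_l$ are where $C<24$ comes from.

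Your critical-path / charging plan is a different decomposition. It is plausible in spirit, but the key claim --- that $T(t) \le C\,\max_k W_k(t)$ with high probability for $C<24$ --- is exactly the entire content of the theorem, and your sketch does not give a concrete mechanism for it. A pathwise inequality of this type is not true: blocking can cascade, and a single slow node can stall a long chain of neighbours whose own workloads $W_k(t)$ are small, so the ratio $T(t)/\max_k W_k(t)$ is not deterministically bounded. What saves the theorem is that such cascades are \emph{rare}; but quantifying that rarity is precisely what requires either the paper's union bound over all causal chains (via $X^t$), or an equally precise probabilistic accounting that your charging step does not supply. In particular, your dichotomy discussion ("local updates either constitute useful work at the idle node or free a neighbour quickly enough") is the heart of the matter restated, not proved, and the claim that in the regime $\tau > 1$ "the comm-only analysis of Hendrikx et al.\ extends essentially verbatim" is asserted rather than derived; in the paper the $\tau>1$ branch is not even where the constant is extracted. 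To rescue the proposal you would need to turn the charging heuristic into a concrete high-probability bound on the total idle time at the busiest node, and I do not see how to do that without effectively reintroducing the $X^t$-style union bound over causal chains.

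One part of your plan that is correct and matches the paper is the workload lower bound $T(t)\ge\max_k W_k(t)$ and the law-of-large-numbers computation of $\max_k W_k(t)/t$; this gives the right quantity $\frac{1}{n}(p_{\rm comp} + 2\tau p_{\rm comm}^{\max})$ inside the probability, which is consistent with the theorem's statement. The missing piece is the matching upper bound with an explicit constant.
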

Note that the constant $C$ is a worst-case estimate and that it is much smaller for homogeneous communication probabilities. This novel result states that the number of iterations that Algorithm~\ref{algo:sc_adfs} can perform per unit of time increases linearly with the size of the network. This is possible because each iteration only involves two nodes so many iterations can be done in parallel. The assumption $p_{\rm comp} > p_{\rm comm}$ is responsible for the $1 + \tau$ factor instead of $\tau$ in Table~\ref{fig:table_speeds}, which prevents \adfs~from benefiting from \emph{network acceleration} when communications are cheap ($\tau < 1$). Note that this is an actual restriction of following a schedule, as detailed in Appendix~\ref{appendix:average_time}. Yet, network operations generally suffer from communication protocols overhead whereas computing a single proximal update often either has a closed-form solution or is a simple one-dimensional problem in the linear case. Therefore, assuming $\tau > 1$ is not very restrictive in the finite-sum setting.

\section{Performances and Parameters Choice in the Homogeneous Setting}
\label{sec:perfs}
We now prove the time to convergence of \adfs~presented in Table~\ref{fig:table_speeds}, and detail the conditions under which it holds. Indeed, Section~\ref{sec:alg} presents \adfs~in full generality but the different parameters have to be chosen carefully to reach optimal speed. In particular, we have to choose the coefficients $\mu$ to make sure that the graph augmentation trick does not cause the smallest positive eigenvalue of $A^T \Sigma^{-1} A$ to shrink too much. Similarly, $\rho$ is defined in Equation~\eqref{eq:S} by a minimum over all edges of a given quantity. This quantity heavily depends on whether the edge is an actual communication edge or a virtual edge. One can trade $p_{\rm comp}$ for $p_{\rm comm}$ so that the minimum is the same for both kind of edges, but Theorem~\ref{thm:synchronization_cost} tells us that this is only possible as long as $p_{\rm comp} > p_{\rm comm}$. More specifically, we define $L = A_{\rm comm} A_{\rm comm}^T \in \mathbb{R}^{n \times n}$ the Laplacian of the communication graph, with $A_{\rm comm} \in \mathbb{R}^{n \times E}$ such that $A_{\rm comm} e_{k\ell} = \mu_{k\ell}(e^{(k)} - e^{(\ell)})$ for all edge $(k, \ell) \in E^{\rm comm}$, the set of communication edges. Then, we define $\tilde{\gamma} = \min_{(k,\ell) \in E^{\rm comm}} \lambda_{\min}^+(L)n^2 / (\mu_{k\ell}^2 R_{k\ell} E^2)$. As shown in Appendix~\ref{app:gamma_tilde}, $\tilde{\gamma} \approx \gamma$ for regular graphs such as the complete graph or the grid, justifying the use of $\gamma$ instead of $\tilde{\gamma}$ in Table~\ref{fig:table_speeds}. We assume for simplicity that $\sigma_i = \sigma$ and that $\kappa_i = 1 + \sigma_i^{-1}\sum_{j = 1}^m L_{i,j} = \kappa_s$ for all nodes. For virtual edges, we choose $\mu_{ij}^2 = \lambda_{\min}^+(L) L_{i,j} / (\sigma \kappa_i)$ and $p_{ij} = p_{\rm comp} (1 + L_{i,j} \sigma_i^{-1})^{\frac{1}{2}} / (n S_{\rm comp})$ with $S_{\rm comp} = n^{-1}\sum_{i=1}^n\sum_{j=1}^m (1 + L_{i,j} \sigma_i^{-1})^{\frac{1}{2}}$. For communications edges $(k,\ell) \in E^{\rm comm}$, we choose $p_{k\ell} = p_{\rm comm} / E$ and $\mu_{k\ell}^2 = 1 / 2$.

\begin{theorem}
\label{thm:adfs_speed}
If we choose $p_{\rm comm} = \min\Big( 1/2, \Big(1 + S_{\rm comp}\sqrt{\tilde{\gamma}/\kappa_s}\Big)^{-1}\Big)$. Then, running Algorithm~\ref{algo:sc_adfs} for $K_\varepsilon = \rho^{-1} \log(\varepsilon^{-1})$ iterations guarantees $\mathbb{E}\left[\|\theta_{K_\varepsilon} - \theta^\star\|^2 \right] \leq C_0 \varepsilon$, and takes time $T(K_\varepsilon)$, with:
\begin{equation*}
    T(K_\varepsilon) \leq \sqrt{2}C\bigg(m + \sqrt{m\kappa_s} + \sqrt{2}\bigg(1 + 4 \tau\bigg)\sqrt{\frac{\kappa_s}{\tilde{\gamma}}}  \bigg) \log\big(1 / \varepsilon\big)
\end{equation*}
with probability tending to $1$ as $\rho^{-1}\log(\varepsilon^{-1}) \rightarrow \infty$, with the same $C_0$ and $C<24$ as in Theorems~\ref{thm:rate_adfs} and \ref{thm:synchronization_cost}.
\end{theorem}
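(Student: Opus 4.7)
The plan is to combine Theorems \ref{thm:rate_adfs} and \ref{thm:synchronization_cost} with the prescribed parameter choices. First I would evaluate the contraction rate given by (\ref{eq:S}) under the chosen $\mu_{k\ell}$ and $p_{k\ell}$, and then multiply the resulting iteration count $K_\varepsilon = \rho^{-1}\log(\varepsilon^{-1})$ by the wall-clock time per iteration obtained from Theorem \ref{thm:synchronization_cost}.

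The core computation is splitting the minimum in (\ref{eq:S}) over virtual edges and communication edges. For a virtual edge $(i,(i,j))$, the virtual node is a leaf in the augmented graph, so the leverage factor $R_{ij}=e_{ij}^{T}A^{\dagger}Ae_{ij}$ evaluates to $1$: since $A A^T$ is the weighted Laplacian, one has $R_{k\ell}=\mu_{k\ell}^{2}\,r_{\rm eff}(k,\ell)$, and the effective resistance of a leaf edge is $1/\mu_{k\ell}^{2}$. The weight $\mu_{ij}^{2}=\lambda_{\min}^{+}(L)L_{i,j}/(\sigma\kappa_s)$ is tuned precisely so that the virtual edges are weak enough not to perturb the smallest non-zero eigenvalue inherited from $L$, so that $\lambda_{\min}^{+}(A^{T}\Sigma^{-1}A)\geq\lambda_{\min}^{+}(L)/\sigma$. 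Substituting these, together with $\Sigma_{ii}^{-1}+\Sigma_{(i,j)(i,j)}^{-1}=1/\sigma+1/L_{i,j}$ and $p_{ij}^{2}=p_{\rm comp}^{2}(1+L_{i,j}/\sigma)/(n^{2}S_{\rm comp}^{2})$, the $L_{i,j}$-dependent factors telescope and yield a bound of the form $\rho^{2}\leq p_{\rm comp}^{2}\kappa_s/(n^{2}S_{\rm comp}^{2})$. For a communication edge, $\mu_{k\ell}^{2}=1/2$ and $p_{k\ell}=p_{\rm comm}/E$ feed directly into the definition of $\tilde{\gamma}$ and give $\rho^{2}\leq\tilde{\gamma}p_{\rm comm}^{2}/n^{2}$.

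Next, the two bounds match exactly when $p_{\rm comp}\sqrt{\kappa_s}/S_{\rm comp}=p_{\rm comm}\sqrt{\tilde{\gamma}}$; combined with $p_{\rm comp}+p_{\rm comm}=1$ this recovers $p_{\rm comm}=1/(1+S_{\rm comp}\sqrt{\tilde{\gamma}/\kappa_s})$, precisely the prescribed value, with the $1/2$ cap ensuring $p_{\rm comp}\geq p_{\rm comm}$ so that Theorem \ref{thm:synchronization_cost} applies when $\tau\leq 1$. Under this choice $\rho^{-1}\leq n/\sqrt{\tilde{\gamma}}+nS_{\rm comp}/\sqrt{\kappa_s}$. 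Theorem \ref{thm:synchronization_cost} then gives $T(K_\varepsilon)\leq(C/n)(p_{\rm comp}+2\tau p_{\rm comm}^{\max})\rho^{-1}\log(\varepsilon^{-1})$ with probability tending to $1$, and $p_{\rm comm}^{\max}=p_{\rm comm}$ in the homogeneous setting. Expanding the product, each cross-term collapses: using $p_{\rm comm}\leq\sqrt{\kappa_s/\tilde{\gamma}}/S_{\rm comp}$ the two $\tau$-dependent summands are each at most $2\tau/\sqrt{\tilde{\gamma}}$, while the $p_{\rm comp}$-terms give $1/\sqrt{\tilde{\gamma}}+S_{\rm comp}/\sqrt{\kappa_s}$. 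Absorbing $1/\sqrt{\tilde{\gamma}}\leq\sqrt{\kappa_s/\tilde{\gamma}}$ into the $(1+4\tau)$ factor, and using the elementary inequalities $\sqrt{1+x}\leq 1+\sqrt{x}$ and Cauchy--Schwarz to get
\[
S_{\rm comp}=\tfrac{1}{n}\sum_{i,j}\sqrt{1+L_{i,j}/\sigma}\leq m+\sqrt{m\kappa_s},
\]
(hence also $S_{\rm comp}/\sqrt{\kappa_s}\leq m+\sqrt{m\kappa_s}$ since $\kappa_s\geq 1$), assembles the stated bound.

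The main obstacle is the first step: precisely establishing $\lambda_{\min}^{+}(A^{T}\Sigma^{-1}A)\geq\lambda_{\min}^{+}(L)/\sigma$ under the augmented-graph construction, and confirming $R_{ij}=1$ for the leaf virtual edges so that the virtual-edge weight $\mu_{ij}^{2}$ cancels cleanly in the denominator of (\ref{eq:S}). Both are graph-theoretic facts specific to the augmented construction; once they are in hand, all that remains is the routine substitution above.
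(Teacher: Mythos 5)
Your overall strategy (split~\eqref{eq:S} into virtual-edge and communication-edge rates, balance them by choosing $p_{\rm comm}$, multiply $\rho^{-1}\log(\varepsilon^{-1})$ by the per-iteration time of Theorem~\ref{thm:synchronization_cost}, then use concavity of $\sqrt{\cdot}$ to bound $S_{\rm comp}\le m+\sqrt{m\kappa_s}$) is exactly the paper's. Your argument for $R_{ij}=1$ via effective resistance of a leaf edge is a correct alternative to the paper's cycle argument (Lemma~\ref{lemma:acrossa}).

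The genuine gap is the spectral estimate you yourself flag as ``the main obstacle.'' You claim that the choice $\mu_{ij}^2=\lambda_{\min}^+(L)L_{i,j}/(\sigma\kappa_s)$ keeps $\lambda_{\min}^+(A^T\Sigma^{-1}A)\ge\lambda_{\min}^+(L)/\sigma$, i.e.\ that the virtual edges ``do not perturb the smallest non-zero eigenvalue.'' This is false. The matrix $\tilde L=\Sigma^{-1/2}AA^T\Sigma^{-1/2}$ (same nonzero spectrum as $A^T\Sigma^{-1}A$) acquires, from the leaf blocks, eigenvalues at $\alpha_i=\mu_{ij}^2/L_{i,j}=\lambda_{\min}^+(L)/(\sigma\kappa_i)$, which are smaller than $\lambda_{\min}^+(L)/\sigma$ by the full factor $\kappa_i$; this is precisely what the block-determinant analysis in Appendix~\ref{app:algo_perfs} establishes, yielding the correct bound $\lambda_{\min}^+(A^T\Sigma^{-1}A)\ge\lambda_{\min}^+(L)/(2\sigma\kappa_s)$, weaker than yours by $2\kappa_s$. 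Propagating this through~\eqref{eq:S} gives (homogeneously) $\rho_{\rm comm}^2\ge\tilde\gamma p_{\rm comm}^2/(4\kappa_s n^2)$ and $\rho_{\rm comp}^2\ge p_{\rm comp}^2/(2n^2S_{\rm comp}^2)$, not your $\tilde\gamma p_{\rm comm}^2/n^2$ and $\kappa_s p_{\rm comp}^2/(n^2S_{\rm comp}^2)$: your $\rho$ is an overestimate by roughly $\sqrt{2\kappa_s}$.

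That overcounting is then invisibly cancelled by the lossy relaxations $1/\sqrt{\tilde\gamma}\le\sqrt{\kappa_s/\tilde\gamma}$ and $S_{\rm comp}/\sqrt{\kappa_s}\le S_{\rm comp}$ at the end, which throw away exactly a $\sqrt{\kappa_s}$. The two errors cancel to within a constant, so the \emph{final} inequality looks right, but the intermediate bound on $\rho$ is wrong and the cancellation is a coincidence rather than a derivation. To make the argument go through you need the non-trivial perturbation lemma proving $\lambda_{\min}^+(A^T\Sigma^{-1}A)\ge\lambda_{\min}^+(L)/(2\sigma\kappa_s)$; with that corrected value the $\kappa_s$ in $\sqrt{\kappa_s/\tilde\gamma}$ and the cancellation making $\rho_{\rm comp}$ independent of $\kappa_s$ both arise for the right reason, and the final relaxations become unnecessary. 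You also do not address the regime where the unconstrained optimizer exceeds $1/2$ and $p_{\rm comm}$ is capped at $1/2$, which the paper handles as a separate case (showing $\tilde\gamma S_{\rm comp}^2\le 2\kappa_s r_\kappa$ there and bounding $T_1(1/2)$ directly).
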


Theorem~\ref{thm:adfs_speed} assumes that all communication probabilities and condition numbers are exactly equal in order to ease reading. A more detailed version with rates for more heterogeneous settings can be found in Appendix~\ref{app:algo_perfs}. Note that while algorithms such as MSDA required to use polynomials of the initial gossip matrix to model several consecutive communication steps, we can more directly tune the amount of communication and computation steps simply by adjusting $p_{\rm comp}$ and $p_{\rm comm}$.

\section{Experiments}
\label{sec:experiments}

\begin{figure*}
\centering
\begin{subfigure}{.33\textwidth}
  \centering
  \includegraphics[width=1.05\linewidth]{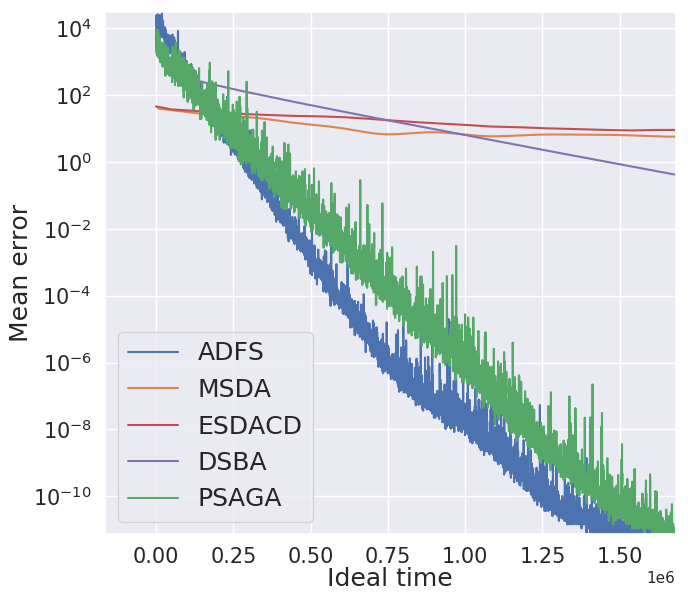}
  \vspace{-15pt}
\caption{Higgs, $n=4$}
\label{fig:2x2_tau1_m1000}
\end{subfigure}%
\begin{subfigure}{.33\textwidth}
  \centering
  \includegraphics[width=1.05\linewidth]{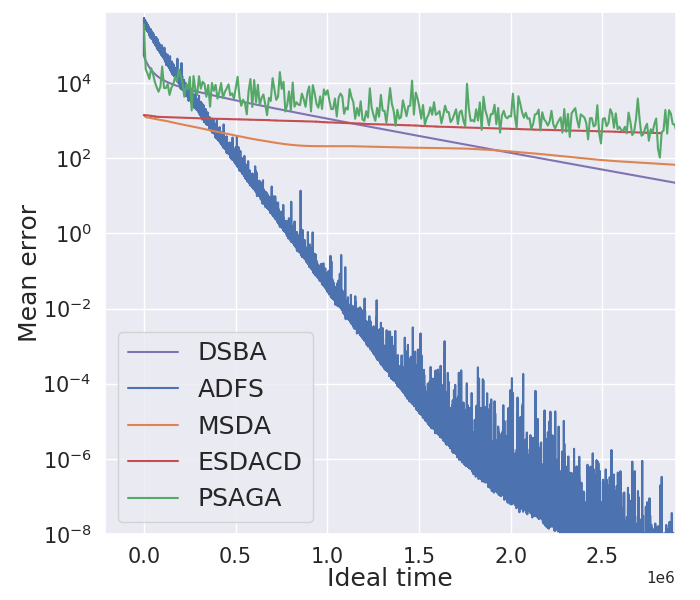}
  \vspace{-15pt}
  \caption{Higgs, $n=100$}
\label{fig:10x10_m300}
\end{subfigure}
\begin{subfigure}{.33\textwidth}
  \centering
  \includegraphics[width=1.05\linewidth]{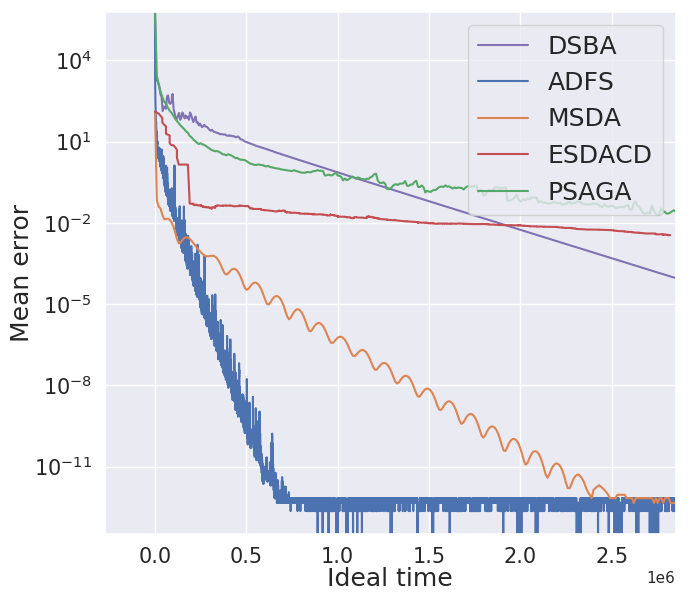}
  \vspace{-15pt}
  \caption{Covtype, $n=100$}
\label{fig:7x7_tau5_m10000}
\end{subfigure}
\vspace{5pt}
\caption{Performances of various decentralized algorithms on the logistic regression task with $m = 10^4$ points per node, regularization parameter $\sigma=1$ and communication delays $\tau=5$ on 2D grid networks of different sizes.}
\label{fig:test}
\end{figure*}
In this section, we illustrate the theoretical results by showing how ADFS compares with MSDA~\citep{scaman2017optimal}, ESDACD~\citep{hendrikx2018accelerated}, Point-SAGA~\citep{defazio2016simple}, and DSBA~\citep{shen2018towards}. All algorithms (except for DSBA, for which we fine-tuned the step-size) were run with out-of-the-box hyperparameters given by theory on data extracted from the standard Higgs and Covtype datasets from LibSVM. The underlying graph is assumed to be a 2D grid network. Experiments were run in a distributed manner on an actual computing cluster. Yet, plots are shown for \emph{idealized times} in order to abstract implementation details as well as ensure that reported timings were not impacted by the cluster status. All the details of the experimental setup as well as a comparison with centralized algorithms can be found in Appendix~\ref{app:experimental_setting}. An implementation of ADFS is also available in supplementary material.

Figure~\ref{fig:2x2_tau1_m1000} shows that, as predicted by theory, ADFS and Point-SAGA have similar rates on small networks. In this case, ADFS uses more computing power but has a small overhead. Figures~\ref{fig:10x10_m300} and~\ref{fig:7x7_tau5_m10000} use a much larger grid to evaluate how these algorithms scale. In this setting, Point-SAGA is the slowest algorithm since it has 100 times less computing power available. MSDA performs quite well on the Covtype dataset thanks to its very good network scaling. Yet, the $m\sqrt{\kappa}$ factor in its rate makes it scales poorly with the condition number $\kappa$, which explains why it struggles on the Higgs dataset. DSBA is slow as well despite the fine-tuning because it is the only non-accelerated method, and it has to communicate after each proximal step, thus having to wait for a time $\tau=5$ at each step. ESDACD does not perform well either because $m > \tau$ and it has to perform as many batch computing steps as communication steps. ADFS does not suffer from any of these drawbacks and therefore outperforms other approaches by a large margin on these experiments. This illustrates the fact that ADFS combines the strengths of accelerated stochastic algorithms, such as Point-SAGA, and fast decentralized algorithms, such as MSDA. 

\section{Conclusion}
In this paper, we provided a novel accelerated stochastic algorithm for decentralized optimization. To the best of our knowledge, it is the first decentralized algorithm that successfully leverages the finite-sum structure of the objective functions to match the rates of the best known sequential algorithms while having the network scaling of optimal batch algorithms. The analysis in this paper could be extended to better handle heterogeneous settings, both in terms of hardware (computing times, delays) and local functions (different regularities). Finally, finding a locally synchronous algorithm that can take advantage of arbitrarily low communication delays (beyond the $\tau > 1$ limit) to scale to large graphs is still an open problem.

\section*{Acknowledgement}
We acknowledge support from the European Research Council (grant SEQUOIA 724063). 

\bibliographystyle{plainnat}
\bibliography{biblio}

\newpage

\appendix 

Section~\ref{app:generalized_apcg} is a self-contained section with the statement and proofs of the extended APCG algorithm. Then, Section~\ref{app:algo_derivations} presents the derivations required to obtain ADFS from the extended APCG algorithm. Section~\ref{appendix:average_time} is dedicated to the study of waiting time in the locally synchronous model, and the analysis of the speed of ADFS for a specific choice of parameters is then given in Section~\ref{app:algo_perfs}. Finally, Section~\ref{app:experimental_setting} details the experimental setting and gives additional experiments involving centralized algorithms.

\section{Generalized APCG}
\label{app:generalized_apcg}
In this section, we study the generic problem of accelerated proximal coordinate descent. We give an algorithm that works with \emph{arbitrary sampling} of the coordinates, thus yielding a stronger result than state-of-the-art approaches~\citep{lin2015accelerated, fercoq2015accelerated}. This is a key contribution that allows to obtain fast rates when sampling probabilities are heterogeneous and determined by the problem. It is especially useful in our case to pick different probabilities for computing and for communicating. We also extend the result to the case in which the function is strongly convex only on a subspace. Since this section of the Appendix is intended to detail the extended APCG general algorithm for a generic problem, it is mostly self-contained, and notations are in particular different from the rest of the paper. More specifically, we study the following generic problem:
\begin{equation}
\label{eq:generic_problem}
    \min_{x \in \mathbb{R}^d}\ \  f_A(x) + \sum_{i=1}^d \psi_i(x^{(i)}),
\end{equation}
where all the functions $\psi_i$ are convex and $f_A$ is such that there exists a matrix $A$ such that $f_A$ is $(\sigma_A)$-strongly convex on ${\rm Ker}(A)^\perp$, the orthogonal of the kernel of $A$. Since, $A^\dagger A$ is the projector on ${\rm Ker}(A)^\perp$, (recall that $A^\dagger$ is the pseudo-inverse of $A$), the strong convexity on this subspace can be written as the fact that for all $x,y \in \mathbb{R}^d$:
\begin{equation}
\label{eq:sc_semi_norm}
f_A(x) - f_A(y) \! \geq \! \nabla f_A(y)^T \! A^\dagger A(x - y) + \textstyle \frac{\sigma_A}{2}(x - y)^T \! A^\dagger A (x - y).    
\end{equation}
Note that this implies that $f_A$ is constant on ${\rm Ker}(A)$, so in particular there exists a function $f$ such that for any $x \in \mathbb{R}^d$, $f_A(x) = f(Ax)$. In this case, $\sigma_A$ is such that $x^TA^T\nabla^2f(y) Ax \geq \sigma_A\|x\|^2$ for any $x\in {\rm Ker}(A)^\perp$ and $y\in \mathbb{R}^d$. Besides, $f_A$ is assumed to be $(M_i)$-smooth in direction $i$ meaning that its gradient in the direction $i$ (noted $\nabla_i f_A$) is $(M_i)$-Lipschitz. This is the general setting of the problem of Equation~\eqref{eq:dual_problem}, that can be recovered by taking $f_A=q_A$ and $\psi_i = \tilde{f_i^*}$. Proximal coordinate gradient algorithms are known to work well for these problems, which is why we would like to use APCG~\citep{lin2014accelerated}. Yet, we would like to pick different probabilities for computing and communication edges, whereas APCG only handles uniform coordinates sampling. Furthermore, the first term is strongly-convex only on the orthogonal of the kernel of the matrix $A$, so APCG cannot be applied straightforwardly. Therefore, we introduce an extended version of APCG, presented in Algorithm~\ref{algo:generalized_apcg}, and we explicit its rate in Theorem~\ref{thm:gen_apcg}. This extended APCG can then directly be applied to solve the problem of Equation~\eqref{eq:dual_problem}.

\subsection{Algorithm and results}
In this appendix, since there is no need to distinguish between primal and dual variables variables as in the main text, we denote $e_i \in \mathbb{R}^d$ the unit vector corresponding to coordinate $i$, and $x^{(i)} = e_i^Tx$ for any $x \in \R^d$. Let $R_i = e_i^T A^\dagger A e_i$ and $p_i$ be the probability that coordinate~$i$ is picked to be updated. Constant $S$ is such that $S^2 \geq \frac{M_i R_i}{p_i^2}$ for all $i$. Then, following the approaches of Nesterov and Stich~\citep{nesterov2017efficiency} and Lin et al.~\citep{lin2015accelerated}, we fix $A_0, B_0 \in \mathbb{R}$ and recursively define sequences $\alpha_t, \beta_t, a_t, A_t$ and $B_t$ such that:
\begin{align*}
    & a_{t+1}^2 S^2 = A_{t+1} B_{t+1}, & B_{t+1} = B_t + \sigma_A a_{t+1}, & \ \ \ \ \ \ \ \ \ A_{t+1} = A_t + a_{t+1}, \\
    & \alpha_t = \frac{a_{t+1}}{A_{t+1}}, & \beta_t = \frac{\sigma_A a_{t+1}}{B_{t+1}}. & 
\end{align*} 
Finally, we introduce the sequences $(y_t)$, $(v_t)$ and $(x_t)$, that are all initialized at $0$, and $(w_t)$ such that for all $t$, $w_t = (1 - \beta_t) v_t + \beta_t y_t$. We define $\eta_{i,t} = \frac{a_{t+1}}{B_{t+1} p_i}$ and the proximal operator:\vspace{-5pt}
\begin{equation*}
    {\rm prox}_{\eta_{i,t} \psi_i}: x \mapsto \arg\min_v \frac{1}{2\eta_{i,t}}\|v - x\|^2 + \psi_i(v).
\end{equation*}
\begin{algorithm}
\caption{Generalized APCG$(A_0, B_0, S, \sigma_A)$}
\label{algo:generalized_apcg}
\begin{algorithmic}
\STATE $y_0 = 0$, $v_0 = 0$, $t = 0$
\WHILE{$t < T$}
\STATE $y_t = \frac{(1 - \alpha_t) x_t + \alpha_t(1 - \beta_t)v_t}{1 - \alpha_t \beta_t}$
\STATE Sample $i$ with probability $p_i$
\STATE $v_{t+1} = z_{t+1} = (1 - \beta_t) v_t + \beta_t y_t - \eta_{i,t} \nabla_i f_A(y_t)$
\STATE $v_{t+1}^{(i)} = {\rm prox}_{\eta_i \psi_i}\left(z_{t+1}^{(i)}\right)$
\STATE $x_{t+1} = y_t + \frac{\alpha_t R_i}{p_i}(v_{t+1} - (1 - \beta_t) v_t - \beta_t y_t)$
\ENDWHILE
\end{algorithmic}
\end{algorithm}
We denote $\nabla_i f_A = e_i e_i^T \nabla f_A$ the coordinate gradient of $f_A$ along direction $i$. For generalized APCG to work well, the proximal operator needs to be taken in the subspace defined by the projector $A^\dagger A$, and so the non-smooth $\psi_i$ terms have to be separable after composition with $A^\dagger A$. Since $A^\dagger A$ is a projector, this constraint is equivalent to stating that either $R_i = 1$ (projection does not affect the coordinate $i$), or $\psi_i = 0$ (no proximal update to make).
\begin{assumption}
\label{assumption:gen_apcg}
The functions $f_A$ and $\psi$ are such that equation Equation~\eqref{eq:sc_semi_norm} holds for some $\sigma_A \geq 0$ and for all $i \in \mathbb{R}^d$, $f_A$ is $(M_i)$-smooth in direction $i$ and $\psi$ and $A$ are such that either $R_i = 1$ or $\psi_i = 0$.
\end{assumption}
This natural assumption allows us to formulate the proximal update in standard squared norm since the proximal operator is only used for coordinates $i$ for which $A^\dagger A e_i = e_i$. Then, we formulate Algorithm~\ref{algo:generalized_apcg} and analyze its rate in Theorem~\ref{thm:gen_apcg}.
\begin{theorem}
\label{thm:gen_apcg}
Let $F: x \mapsto f_A(x) + \sum_{i=1}^d \psi_i\left(x^{(i)}\right)$ such that Assumption~\ref{assumption:gen_apcg} holds. If $S$ is such that $S^2 \geq \frac{M_i R_i}{p_i^2}$ and $1 - \beta_t - \frac{\alpha_t R_i}{p_i} \geq 0$, the sequences $v_t$ and $x_t$ generated by APCG verify:
\begin{equation*}
    B_t\esp{\|v_t - \theta^\star\|^2_{A^\dagger A}} + 2 A_t \left[\esp{F(x_t)} - F(\theta^\star)\right] \leq C_0,
\end{equation*}
where $C_0 = B_0 \|v_0 - \theta^\star\|^2 + 2A_0\left[F(x_0) - F(\theta^\star)\right]$ and $\theta^\star$ is a minimizer of $F$. The rate of APCG depends on $S$ through the sequences $\alpha_t$ and $\beta_t$. 
\end{theorem}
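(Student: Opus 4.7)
The plan is to run an estimate-sequence style argument with the Lyapunov function
$$\Phi_t = B_t \,\|v_t - \theta^\star\|^2_{A^\dagger A} + 2 A_t \bigl[F(x_t) - F(\theta^\star)\bigr],$$
and prove the one-step contraction $\esp{\Phi_{t+1}\mid \mathcal{F}_t} \leq \Phi_t$; telescoping and noting $\Phi_0 = C_0$ then gives the claim. The three coupled sequences $x_t, v_t, y_t$ are designed precisely so that this Lyapunov inequality becomes an equality up to nonnegative slack coming from strong convexity and smoothness.

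First I would analyze the $v$-update. Writing $w_t = (1-\beta_t)v_t + \beta_t y_t$, the formula $v_{t+1} = w_t - \eta_{i,t}\nabla_i f_A(y_t)$ followed by the coordinate proximal step on $\psi_i$ gives, via the standard three-point identity for proximal operators and the optimality condition, an inequality of the form
$$\|v_{t+1}-\theta^\star\|^2_{A^\dagger A} \leq \|w_t-\theta^\star\|^2_{A^\dagger A} - 2\eta_{i,t}\bigl\langle \nabla_i f_A(y_t) + \partial\psi_i(v_{t+1}^{(i)}),\, v_{t+1}-\theta^\star\bigr\rangle - \|v_{t+1}-w_t\|^2_{A^\dagger A}.$$
Here Assumption~\ref{assumption:gen_apcg} (either $R_i=1$ or $\psi_i = 0$) is what lets the proximal coordinate update be compatible with the $A^\dagger A$-semi-norm. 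Taking expectation over $i\sim p$ and using $\eta_{i,t} p_i = a_{t+1}/B_{t+1}$ makes the cross terms become \emph{full} (not coordinate-wise) inner products against $\nabla f_A(y_t)$ and a subgradient of $\sum_i \psi_i$, multiplied by a factor independent of $i$.

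Next I would attack the $x$-update by using $M_i$-smoothness of $f_A$ in direction $i$: with $x_{t+1} = y_t + (\alpha_t R_i/p_i)(v_{t+1}-w_t)$,
$$f_A(x_{t+1}) \leq f_A(y_t) + \tfrac{\alpha_t R_i}{p_i}\bigl\langle \nabla_i f_A(y_t), v_{t+1}-w_t\bigr\rangle + \tfrac{M_i}{2}\tfrac{\alpha_t^2 R_i^2}{p_i^2}\|v_{t+1}-w_t\|^2.$$
The hypothesis $S^2 \geq M_i R_i/p_i^2$, together with the scheduling identity $a_{t+1}^2 S^2 = A_{t+1}B_{t+1}$, is exactly what is needed so that, after taking expectation, the quadratic $\|v_{t+1}-w_t\|^2$ term arising here is absorbed by the negative quadratic produced in the $v$-analysis. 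For the nonsmooth piece, the condition $1-\beta_t - \alpha_t R_i/p_i \geq 0$ writes $x_{t+1}$ as a convex combination supported on coordinate $i$, so that $\psi_i(x_{t+1}^{(i)})$ is bounded via Jensen by a convex combination of $\psi_i(y_t^{(i)})$ and $\psi_i(v_{t+1}^{(i)})$, producing terms that align with the subgradient coupling from the previous step.

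Finally I would combine the two bounds and invoke the strong-convexity inequality
$$f_A(\theta^\star) \geq f_A(y_t) + \nabla f_A(y_t)^T A^\dagger A(\theta^\star - y_t) + \tfrac{\sigma_A}{2}\|y_t-\theta^\star\|^2_{A^\dagger A},$$
together with the defining convex-combination formula for $y_t$ in terms of $x_t$ and $v_t$, which is chosen so that $-\alpha_t(w_t - y_t) = \alpha_t(1-\alpha_t)(v_t-y_t)/(1-\alpha_t\beta_t) + \text{cancellations}$ assembles linear-in-$v_t$ terms into the gradient part of the $v$-analysis and linear-in-$x_t$ terms into $A_t[F(x_t) - F(\theta^\star)]$. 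The scalar recursions $B_{t+1}=B_t+\sigma_A a_{t+1}$ and $A_{t+1}=A_t+a_{t+1}$ make these cancellations exact. The \textbf{main obstacle} is bookkeeping the projector $A^\dagger A$ through every cross term: the descent lemma and proximal inequality live in the Euclidean norm while strong convexity lives in the semi-norm $\|\cdot\|_{A^\dagger A}$, and Assumption~\ref{assumption:gen_apcg} must be invoked at each swap to guarantee that the non-smooth coordinates are unaffected by the projection. Secondary care is needed to handle the arbitrary (non-uniform) sampling, since the importance-weighted step sizes $\eta_{i,t}\propto 1/p_i$ and the rescaled extrapolation factor $R_i/p_i$ in the $x_{t+1}$ update are precisely what restore a clean, $i$-independent expected descent.
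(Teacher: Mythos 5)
Your skeleton (a Lyapunov of the form $B_t\|v_t-\theta^\star\|^2_{A^\dagger A} + 2A_t[F(x_t)-F(\theta^\star)]$, the three-point expansion of the $v$-update, the directional descent lemma for the $x$-update absorbed via $S^2 \ge M_i R_i/p_i^2$ and $a_{t+1}^2S^2 = A_{t+1}B_{t+1}$, and the strong-convexity inequality on $f_A$) matches the paper's proof. The gap is in the treatment of the nonsmooth term, which is precisely the paper's main technical novelty here.

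You claim that the condition $1-\beta_t-\alpha_t R_i/p_i \ge 0$ lets you bound $\psi_i(x_{t+1}^{(i)})$ by Jensen using a convex combination of $\psi_i(y_t^{(i)})$ and $\psi_i(v_{t+1}^{(i)})$. This is not correct. Expanding $w_t^{(i)} = (1-\beta_t)v_t^{(i)}+\beta_t y_t^{(i)}$ in $x_{t+1}^{(i)} = y_t^{(i)} + \tfrac{\alpha_t R_i}{p_i}(v_{t+1}^{(i)}-w_t^{(i)})$ gives
\begin{equation*}
x_{t+1}^{(i)} = \Bigl(1 - \tfrac{\alpha_t\beta_t R_i}{p_i}\Bigr)y_t^{(i)} + \tfrac{\alpha_t R_i}{p_i}\, v_{t+1}^{(i)} - \tfrac{\alpha_t(1-\beta_t) R_i}{p_i}\,v_t^{(i)},
\end{equation*}
which has a strictly \emph{negative} coefficient on $v_t^{(i)}$. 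Jensen does not apply. Even if one further expands $y_t^{(i)}$ in terms of $x_t^{(i)}$ and $v_t^{(i)}$, the coefficients on $v_t^{(i)}$ still do not become obviously nonnegative, and one is left with $\psi_i(v_t^{(i)})$, which is not a term in your Lyapunov and has no one-step recursion. This is exactly why the paper introduces the history-dependent quantity $\hat\psi_t^{(i)} = \sum_{l\le t}\delta_t^{(i)}(l)\,\psi_i(v_l^{(i)})$ in Lemma~\ref{lemma:lyapunov_psi}: it proves (i) by a separate induction that the full decomposition $x_t^{(i)}=\sum_{l\le t}\delta_t^{(i)}(l)v_l^{(i)}$ has all $\delta_t^{(i)}(l)\ge 0$ \emph{because} $1-\beta_t-\alpha_t/p_i\ge 0$ (the negative $v_t$-coefficient above is offset by the $v_t$-mass hidden inside $x_t$); and (ii) that the cumulative $\hat\psi_t$ satisfies the clean expected recursion $\esp{\hat\psi_{t+1}}\le\alpha_t\psi(\tilde v_{t+1})+(1-\alpha_t)\hat\psi_t$. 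The paper then runs the Lyapunov argument with $\hat\psi_t$ in place of $\psi(x_t)$, i.e.\ with $\Delta\hat F_A(x_t) = \Delta f_A(x_t) + \esp{\hat\psi_t} - \psi(\theta^\star)$, and only at the end deduces the stated bound via $\psi(x_t)\le\hat\psi_t$ and $\hat\psi_0=\psi(x_0)$. Without this device your one-step contraction on $\Phi_t$ as written does not go through for $p_i < 1$; with uniform $p_i=1/d$ one recovers the original APCG lemma, and the arbitrary-sampling generalization of that lemma is the crux of the result.
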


Our extended APCG algorithm is also closely related with an arbitrary sampling version of APPROX~\cite{fercoq2015accelerated}. Yet, APPROX has an explicit formulation with a more flexible block selection rule than choosing only one coordinate at a time. Similarly to Lee and Sidford~\cite{lee2013efficient}, it also uses iterations that can be more efficient, especially in the linear case. These extensions can also be applied to APCG under the same assumptions, but this is beyond the scope of this paper. Theorem~\ref{thm:gen_apcg} is a general method that in particular requires to set values for $A_0$, $B_0$, $\alpha_0$ and $\beta_0$. The two following corollaries give choices of parameters depending on whether $\sigma_A > 0$ or $\sigma_A = 0$, along with the rate of APCG in these cases.

\begin{corollary}[Strongly Convex case]
\label{corr:sc_apcg}
Let $F$ be such that it verifies the assumptions of Theorem~\ref{thm:gen_apcg}. If $\sigma_A > 0$, we can choose for all $t\in \mathbb{N}$ $\alpha_t = \beta_t = \rho$ and $A_t = \sigma_A^{-1} B_t = (1 - \rho)^{-t}$ with $\rho = \sqrt{\sigma_A}S^{-1}$. In this case, the condition $1 - \beta_t - \frac{\alpha_t R_i}{p_i} \geq 0$ can be weakened to $1 - \frac{\alpha_t R_i}{p_i} \geq 0$ and it is automatically satisfied by our choice of $S$, $\alpha_t$ and $\beta_t$. In this case, APCG converges linearly with rate $\rho$, as shown by the following result: $$\sigma_A \esp{\|v_t - \theta^\star\|^2_{A^\dagger A}} + 2\left[\esp{F(x_t)} - F(\theta^\star)\right] \leq C_0 (1- \rho)^t$$
\end{corollary}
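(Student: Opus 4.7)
The proof is essentially a verification that the proposed parameter choices are consistent with the recursive definitions of $\alpha_t,\beta_t,a_t,A_t,B_t$ stated just before Algorithm~\ref{algo:generalized_apcg}, followed by a direct application of Theorem~\ref{thm:gen_apcg}. I would organize it in three steps.

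First, I would check consistency. Plugging $\alpha_t=\beta_t=\rho$ into $\alpha_t=a_{t+1}/A_{t+1}$ and $\beta_t=\sigma_A a_{t+1}/B_{t+1}$ forces $B_{t+1}=\sigma_A A_{t+1}$, so the identity $B_t=\sigma_A A_t$ is preserved, matching the stated $A_t=\sigma_A^{-1}B_t=(1-\rho)^{-t}$. The recursion $A_{t+1}=A_t+a_{t+1}$ together with $a_{t+1}=\rho A_{t+1}$ gives $A_{t+1}=A_t/(1-\rho)$, so $A_t=(1-\rho)^{-t}$ (normalizing $A_0=1$). Finally, the defining constraint $a_{t+1}^2 S^2=A_{t+1}B_{t+1}=\sigma_A A_{t+1}^2$ becomes $\rho^2 S^2=\sigma_A$, fixing $\rho=\sqrt{\sigma_A}/S$.

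Second, I would verify the coordinate step-size condition $1-\alpha_t R_i/p_i\geq 0$. Testing Equation~\eqref{eq:sc_semi_norm} against the direction $e_i$ and combining with the $(M_i)$-smoothness along coordinate $i$ gives $M_i\geq\sigma_A R_i$. Hence from $S^2\geq M_i R_i/p_i^2$,
\[
\frac{\alpha_t R_i}{p_i}=\frac{\sqrt{\sigma_A}\,R_i}{S p_i}\leq\sqrt{\frac{\sigma_A R_i^2}{M_i R_i}}=\sqrt{\frac{\sigma_A R_i}{M_i}}\leq 1,
\]
which is exactly the weakened condition stated in the corollary.

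The main technical point I expect to need care is justifying why the full condition $1-\beta_t-\alpha_t R_i/p_i\geq 0$ can be replaced by the weaker $1-\alpha_t R_i/p_i\geq 0$ when $\alpha_t=\beta_t$. Tracing back into the proof of Theorem~\ref{thm:gen_apcg}, the stronger requirement typically arises only to force $x_{t+1}$ to be a genuine convex combination of $y_t$ and $v_{t+1}$ in the descent bound. In the strongly convex regime, however, the intermediate point $w_t=(1-\beta_t)v_t+\beta_t y_t$ picks up a $\sigma_A\beta_t\|v_t-\theta^\star\|^2_{A^\dagger A}$ gain from the strong convexity of $f_A$ at $w_t$, while convexity of the squared semi-norm yields $(1-\beta_t)\|v_t-\theta^\star\|^2_{A^\dagger A}+\beta_t\|y_t-\theta^\star\|^2_{A^\dagger A}\geq\|w_t-\theta^\star\|^2_{A^\dagger A}$. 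Balancing these two effects absorbs exactly one factor of $\beta_t$ in the per-step inequality, leaving only $1-\alpha_t R_i/p_i\geq 0$ as the admissible step-size constraint. This is the step I would write out carefully.

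Finally, dividing the conclusion of Theorem~\ref{thm:gen_apcg} through by $A_t=(1-\rho)^{-t}$ and using $B_t/A_t=\sigma_A$ gives
\[
\sigma_A\,\esp{\|v_t-\theta^\star\|^2_{A^\dagger A}}+2\bigl[\esp{F(x_t)}-F(\theta^\star)\bigr]\leq C_0(1-\rho)^t,
\]
which is precisely the advertised linear rate.
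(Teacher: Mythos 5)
Your first two steps and your closing application of Theorem~\ref{thm:gen_apcg} are correct and essentially coincide with the paper's proof: the consistency check of the recursion with $\alpha_t=\beta_t=\rho$, $A_t=\sigma_A^{-1}B_t=(1-\rho)^{-t}$, and the bound $\alpha_t R_i\le p_i$ via $M_i\ge\sigma_A R_i$ (obtained by testing smoothness against strong convexity along $e_i$) are exactly the arguments used there.

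However, the step you flag as the ``main technical point'' — justifying the relaxation of $1-\beta_t-\alpha_t R_i/p_i\ge 0$ to $1-\alpha_t R_i/p_i\ge 0$ when $\alpha_t=\beta_t$ — is argued for the wrong reason, and if you tried to write it out as you describe it would not go through. The constraint does \emph{not} come from forcing $x_{t+1}$ to be a convex combination of $y_t$ and $v_{t+1}$ in the descent bound, and it is not relaxed by ``balancing'' the strong-convexity gain at $w_t$ against the convexity of the squared semi-norm. That balancing (via $a_{t+1}\sigma_A=B_{t+1}\beta_t$ cancelling the $\|y_t-\theta^\star\|^2_{A^\dagger A}$ terms) appears in the telescoping of Theorem~\ref{thm:gen_apcg} and happens independently of the step-size constraint. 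The constraint actually arises in Lemma~\ref{lemma:lyapunov_psi}, where it is needed to make the coefficient
\[
\delta_{t+1}^{(i)}(t)=\frac{\alpha_t}{1-\alpha_t\beta_t}\Bigl[\Bigl(1-\beta_t-\tfrac{\alpha_t}{p_i}\Bigr)+\tfrac{\beta_t}{p_i}\Bigl(1-(1-\alpha_t)\tfrac{\alpha_t}{p_i}\Bigr)\Bigr]
\]
non-negative, so that $x_t^{(i)}$ is a genuine convex combination of $\{v_l^{(i)}\}_{l\le t}$ and $\psi_i(x_t^{(i)})\le\hat\psi_t^{(i)}$. When $\alpha_t=\beta_t$, the $\pm\alpha_t/p_i$ terms cancel and the bracket factors as $(1-\alpha_t)\bigl(1-\alpha_t^2/p_i^2\bigr)$, giving the weakened condition $\alpha_t\le p_i$ purely algebraically. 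This alternative hypothesis is already stated in Lemma~\ref{lemma:lyapunov_psi}, so the corollary's proof need only invoke it; no new strong-convexity argument is required or available for this purpose.
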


\begin{corollary}[Convex case]
\label{corr:cvx_apcg}
Let $F$ be such that it verifies the assumptions of Theorem~\ref{thm:gen_apcg}. If $\sigma_A = 0$, we can choose $\beta_t = 0$, $B_0 = 1$ and $A_0 = \frac{3B_0}{S^2 p_R^2}$ with $p_R = \min_i \frac{p_i}{R_i}$. In this case, the condition $1 - \beta_t - \frac{\alpha_t R_i}{p_i} \geq 0$ is always satisfied for our choice of $S$ and the error verifies:
$$ \esp{F(x_t)} - F(\theta^\star) \leq \frac{2}{t^2} \left[S^2 r_t^2 + \frac{6}{p_{R}^2}\left[F(x_0) - F(\theta^\star)\right]\right],$$
with $r_t^2 = \|v_0 - \theta^\star\|^2_{A^\dagger A} -  \mathbb{E}[\|v_t - \theta^\star\|^2_{A^\dagger A}]$.
\end{corollary}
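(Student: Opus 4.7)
The plan is to apply Theorem~\ref{thm:gen_apcg} directly and specialize the resulting inequality. Rearranged, that inequality reads $2A_t(\mathbb{E}[F(x_t)] - F(\theta^\star)) \leq B_0\|v_0 - \theta^\star\|^2_{A^\dagger A} + 2A_0[F(x_0) - F(\theta^\star)] - B_t\mathbb{E}[\|v_t - \theta^\star\|^2_{A^\dagger A}]$. With $\sigma_A = 0$ and $\beta_t = 0$, the recursion $B_{t+1} = B_t + \sigma_A a_{t+1}$ collapses to $B_t = B_0 = 1$ for all $t$, so the right-hand side reduces to $r_t^2 + 2A_0[F(x_0) - F(\theta^\star)]$ in the notation of the corollary. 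Dividing by $2A_t$ yields the template bound $\mathbb{E}[F(x_t)] - F(\theta^\star) \leq r_t^2/(2A_t) + A_0[F(x_0) - F(\theta^\star)]/A_t$, and all remaining work is to lower-bound $A_t$ and to upper-bound $A_0/A_t$.

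Next I would analyze the sequence $(A_t)$. The identity $a_{t+1}^2 S^2 = A_{t+1} B_{t+1} = A_{t+1}$ gives $a_{t+1} = S^{-1}\sqrt{A_{t+1}}$, so $A_{t+1} - A_t = S^{-1}\sqrt{A_{t+1}}$. Factoring the left-hand side as $(\sqrt{A_{t+1}} - \sqrt{A_t})(\sqrt{A_{t+1}} + \sqrt{A_t})$ and using the trivial bound $\sqrt{A_{t+1}} + \sqrt{A_t} \leq 2\sqrt{A_{t+1}}$ yields $\sqrt{A_{t+1}} - \sqrt{A_t} \geq (2S)^{-1}$. Telescoping gives $\sqrt{A_t} \geq \sqrt{A_0} + t/(2S)$, hence $A_t \geq t^2/(4S^2)$.

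Then I would verify the step-size feasibility condition $1 - \beta_t - \alpha_t R_i/p_i \geq 0$. With $\beta_t = 0$, this reduces to $\alpha_t \leq p_R$, since $R_i/p_i \leq 1/p_R$ by definition of $p_R$. From $\alpha_t = a_{t+1}/A_{t+1} = 1/(S\sqrt{A_{t+1}})$ and the monotonicity $A_{t+1} \geq A_0 = 3/(S^2 p_R^2)$ coming from $a_{t+1} \geq 0$, we get $\alpha_t \leq p_R/\sqrt{3} < p_R$ uniformly in $t$, as announced.

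Finally, I would substitute $A_t \geq t^2/(4S^2)$ and $A_0 = 3/(S^2 p_R^2)$ into the template bound: the first term becomes at most $2S^2 r_t^2/t^2$, while the second becomes at most $12[F(x_0) - F(\theta^\star)]/(t^2 p_R^2)$, and summing these produces exactly the announced inequality. The only genuinely non-trivial point is the choice of the constant $3$ in $A_0$: it is not arbitrary but is pinned down from two sides, since $A_0$ must be at least $1/(S^2 p_R^2)$ for the step-size condition to hold, while $A_0/A_t \leq 12/(t^2 p_R^2)$ (the target ratio) combined with $A_t \geq t^2/(4S^2)$ forces $A_0 \leq 3/(S^2 p_R^2)$. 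Thus $A_0 = 3B_0/(S^2 p_R^2)$ is the unique value saturating both constraints, and making this matching explicit is the main subtlety of the argument.
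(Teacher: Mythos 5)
Your proof is correct and follows the same overall route as the paper (apply Theorem~\ref{thm:gen_apcg}, set $\beta_t = 0$ so $B_t \equiv B_0$, choose $A_0$, lower-bound $A_t$, verify the step-size condition, and substitute), but two of your intermediate steps are noticeably cleaner than the paper's. For the lower bound $A_t \geq B_0 t^2 / (4S^2)$, the paper just asserts ``a direct recursion yields'' it, while you make this explicit via the factorization $(\sqrt{A_{t+1}} - \sqrt{A_t})(\sqrt{A_{t+1}} + \sqrt{A_t}) = a_{t+1} = S^{-1}\sqrt{A_{t+1}}$ and the bound $\sqrt{A_{t+1}} + \sqrt{A_t} \leq 2\sqrt{A_{t+1}}$, which telescopes immediately. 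For the feasibility condition $\alpha_t \leq p_R$, the paper computes $\alpha_0$ from the explicit recursion, substitutes $X = 2S^2 B_0^{-1} A_0$, and runs a multi-line chain of inequalities to show $X \geq 6/p_R^2$ suffices; you instead observe directly that $\alpha_t = a_{t+1}/A_{t+1} = 1/(S\sqrt{A_{t+1}}) \leq 1/(S\sqrt{A_0}) = p_R/\sqrt{3} < p_R$, which is one line and in fact reveals that $A_0 = B_0/(S^2 p_R^2)$ (constant $1$ rather than $3$) would already suffice, tightening the final constant. Your closing paragraph about the constant $3$ being ``uniquely pinned down'' is a bit loose as stated: the lower constraint from feasibility is only $A_0 \geq B_0/(S^2 p_R^2)$, not with the factor $3$, and the ``upper constraint'' is just reverse-engineered from the target inequality rather than being a genuine requirement; since you flag this as commentary rather than a proof step it does not affect correctness, but I would soften the uniqueness claim.
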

In the convex case, we only have control over the objective function $F$ and not over the parameters. This in particular means that it is only possible to have guarantees on the dual objective in the case of non-smooth ADFS. 

\subsection{Proof of Theorem~\ref{thm:gen_apcg}}
Before starting the proof, we define $w_t = (1 - \beta_t) v_t + \beta_t y_t$, and:
$$V_i^t(v) = \frac{B_{t+1}p_i}{2 a_{t+1}} \|v - w_t^{(i)} + \eta_i e_i^T \nabla f(y_t)\|^2 + \psi_i(v).$$ Then, we give the following lemma, that we prove later:

\begin{lemma}
\label{lemma:lyapunov_psi}
If either $1 - \beta_t - \frac{\alpha_t}{p_i} \geq 0$ or $\alpha_t = \beta_t$ and $1 - \frac{\alpha_t}{p_i} \geq 0$ for any $i$ such that $\psi_i \neq 0$, then for any $t$ and $i$ such that $\psi_i \neq 0$, we can write $x_t^{(i)} = \sum_{l=0}^t \delta^{(i)}_t(l) v_l^{(i)}$ such that $\sum_{l=0}^t \delta^{(i)}_t(l) = 1$ and for any $l$, $\delta^{(i)}_t(l) \geq 0$. We define $\hat{\psi}_t^{(i)} = \sum_{l=0}^t \delta^{(i)}_t(l) \psi_i(v_l^{(i)})$ and $\hat{\psi}_t = \sum_{i=1}^d \hat{\psi}_t^{(i)}$. Then, if $R_i = 1$ whenever $\psi_i \neq 0$, $\psi(x_t) \leq \hat{\psi}_t$ and:
\begin{equation}
    \mathbb{E}_{i_t}\left[\hat{\psi}_{t+1}\right] \leq \alpha_t \psi(\tilde{v}_{t+1}) + (1 - \alpha_t)\hat{\psi}_t.
\end{equation}
where $\tilde{v}_{t+1}^{(i)} = \arg \min_v V_i^t(v)$ for all $i$. In particular, $v_{t+1}^{(i_t)} = \tilde{v}_{t+1}^{(i_t)}$ and $v_{t+1}^{(j)} = w_t^{(j)}$ for $j \neq i_t$.
\end{lemma}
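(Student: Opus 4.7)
Plan: My plan is to prove the three assertions of the lemma---the existence of a convex representation of $x_t^{(i)}$, the pointwise bound $\psi(x_t) \leq \hat{\psi}_t$, and the expected decrease---by a single induction on $t$, with the existence claim as the technical core. The starting point is the simplifying identity $y_t = (1-\alpha_t)\,x_t + \alpha_t\,w_t$, which I would derive by substituting $w_t = (1-\beta_t) v_t + \beta_t y_t$ into the definition of $y_t$ and solving for $y_t$. This makes the update for $x_{t+1}^{(i)}$ split cleanly into two cases: for $i \neq i_t$, using $v_{t+1}^{(i)} = w_t^{(i)}$, it becomes $x_{t+1}^{(i)} = (1-\alpha_t)\,x_t^{(i)} + \alpha_t\,v_{t+1}^{(i)}$; for $i = i_t$ with $R_i = 1$, it becomes $x_{t+1}^{(i)} = (1-\alpha_t)\,x_t^{(i)} + \alpha_t(1 - 1/p_i)\,w_t^{(i)} + (\alpha_t/p_i)\,\tilde v_{t+1}^{(i)}$.

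Next I would construct the $\delta_t^{(i)}$'s inductively while maintaining the side invariant $\delta_t^{(i)}(t) \geq T_t := \alpha_t(1-\beta_t)(1-p_i)/[(1-\alpha_t)(p_i - \alpha_t\beta_t)]$, the threshold needed to absorb the negative contribution from the chosen case. In the chosen case the coefficient on $\tilde v_{t+1}^{(i)}$ is forced to $\alpha_t/p_i$, and the other weights are obtained by expanding $w_t^{(i)} = [(1-\beta_t)\,v_t^{(i)} + \beta_t(1-\alpha_t)\,x_t^{(i)}]/(1-\alpha_t\beta_t)$ and substituting the inductive representation of $x_t^{(i)}$; the invariant is precisely what makes the resulting coefficient of $v_t^{(i)}$ non-negative. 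In the non-chosen case, the identity $v_{t+1}^{(i)} = w_t^{(i)}$ gives a one-parameter family of admissible representations, and I would choose the weight on $v_{t+1}^{(i)}$ to equal $T_{t+1}$; re-expanding $w_t^{(i)}$ in terms of $v_t^{(i)}$ and $x_t^{(i)}$ again yields non-negative coefficients, provided the invariant holds. The condition $1 - \beta_t - \alpha_t/p_i \geq 0$ (or its weaker form $1 - \alpha_t/p_i \geq 0$ when $\alpha_t = \beta_t$) is the algebraic inequality that forces $\alpha_t/p_i \geq T_{t+1}$, so the invariant propagates through both branches.

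With the convex representation in hand, $\psi(x_t) \leq \hat{\psi}_t$ is immediate coordinatewise from Jensen's inequality applied to each convex $\psi_i$. For the expectation bound, I would average the two case-specific formulas for $\delta_{t+1}^{(i)}$ against the probabilities $p_i$ and $1-p_i$: a short algebraic cancellation gives $\mathbb{E}_{i_t}[\delta_{t+1}^{(i)}(l)] = (1-\alpha_t)\,\delta_t^{(i)}(l)$ for $l \leq t$, and the contribution at $l = t+1$ collapses to exactly $\alpha_t\,\psi_i(\tilde v_{t+1}^{(i)})$, since only the chosen case contributes a genuine $\tilde v_{t+1}^{(i)}$ term with coefficient $\alpha_t/p_i$, and the $1/p_i$ is cancelled by the selection probability $p_i$. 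Summing over $i$ yields the stated bound $\mathbb{E}_{i_t}[\hat{\psi}_{t+1}] \leq (1-\alpha_t)\,\hat{\psi}_t + \alpha_t\,\psi(\tilde v_{t+1})$.

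The main obstacle I anticipate is the non-negativity verification under the stated parameter condition: because the $\delta$'s are not uniquely determined---thanks to $w_t^{(i)}$ itself being a convex combination of earlier $v_l^{(i)}$'s---one has to exploit this freedom deliberately to keep the lower-bound invariant $\delta_t^{(i)}(t) \geq T_t$ alive across both chosen and non-chosen steps. The single scalar inequality in the hypothesis is exactly what makes the resulting system of constraints tight enough to propagate.
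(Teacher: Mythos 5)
Your high-level plan---induction on $t$, convex representation of $x_t^{(i)}$ in terms of past $v_l^{(i)}$'s, Jensen for $\psi(x_t) \leq \hat\psi_t$, and a per-step expectation bound---does match the paper's structure, and your identity $y_t = (1-\alpha_t)x_t + \alpha_t w_t$ is correct and useful. However, the central mechanism has a genuine gap that the paper handles differently, and your stated recipe is internally inconsistent.

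\textbf{The case split is the problem.} You decompose $x_{t+1}^{(i)}$ differently depending on whether $i$ was chosen (weight $\alpha_t/p_i$ on $\tilde v_{t+1}^{(i)}$) or not (weight $\alpha_t$, or $T_{t+1}$ after your proposed patch, on $v_{t+1}^{(i)} = w_t^{(i)}$). These two weights make two different things fail. With weight $\alpha_t$ in the non-chosen case, the invariant $\delta_{t+1}^{(i)}(t+1) \geq T_{t+1}$ can fail: already at $\beta_t=0$, the threshold is $T = \alpha(1-p_i)/[(1-\alpha)p_i]$, and $\alpha \geq T$ forces $2p_i \geq 1+\alpha p_i$, which breaks for small $p_i$---but the whole point of the arbitrary-sampling extension is to allow small $p_i$. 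With weight $T_{t+1} > 0$, the $l=t+1$ contribution to $\mathbb{E}_{i_t}[\hat\psi_{t+1}^{(i)}]$ becomes $\alpha_t\psi_i(\tilde v_{t+1}^{(i)}) + (1-p_i)T_{t+1}\psi_i(w_t^{(i)})$, which does \emph{not} ``collapse exactly'' as you assert; the $\psi_i(w_t^{(i)})$ term must be dealt with. Similarly, your claim $\mathbb{E}_{i_t}[\delta_{t+1}^{(i)}(l)] = (1-\alpha_t)\delta_t^{(i)}(l)$ for $l\leq t$ is not an exact identity: the recursion factor is $\bigl(1-\tfrac{\alpha_t\beta_t}{p_i}\bigr)\tfrac{1-\alpha_t}{1-\alpha_t\beta_t}$, and the clean $(1-\alpha_t)$ only emerges after two applications of convexity (first on $\psi_i(w_t^{(i)})$, then on $\psi_i(x_t^{(i)}) \leq \hat\psi_t^{(i)}$), with a crucial cancellation of $\psi_i(v_t^{(i)})$ terms along the way.

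\textbf{What the paper does instead.} It uses the \emph{unconditional} update identity $x_{t+1}^{(i)} = y_t^{(i)} + \tfrac{\alpha_t}{p_i}\bigl(v_{t+1}^{(i)} - w_t^{(i)}\bigr)$, which holds for every coordinate $i$ with $R_i=1$ whether or not $i$ was chosen (in the non-chosen case $v_{t+1}^{(i)} = w_t^{(i)}$, so it reduces to $x_{t+1}^{(i)} = y_t^{(i)}$). Expanding $y_t^{(i)}$ and $w_t^{(i)}$ gives a single \emph{deterministic} recursion for $\delta_{t+1}^{(i)}(l)$; the coefficient on $v_{t+1}^{(i)}$ is always $\alpha_t/p_i$, so nonnegativity of $\delta_{t+1}^{(i)}(t)$ follows directly from the hypothesis $1-\beta_t-\alpha_t/p_i \geq 0$ (or the weakened form when $\alpha_t=\beta_t$), exactly as you anticipate. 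All randomness then lives in the \emph{value} $v_{t+1}^{(i)}$, not in the coefficients, so $\mathbb{E}_{i_t}\bigl[\tfrac{\alpha_t}{p_i}\psi_i(v_{t+1}^{(i)})\bigr] = \alpha_t\psi_i(\tilde v_{t+1}^{(i)}) + (1-p_i)\tfrac{\alpha_t}{p_i}\psi_i(w_t^{(i)})$, and the second term is absorbed via convexity. Adopting this unified decomposition would remove both the invariant bookkeeping and the inconsistency in your non-chosen-case weight.
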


Note that Lemma~\ref{lemma:lyapunov_psi} is a generalization to arbitrary sampling probabilities of the beginning of the proof in~\citep{lin2015accelerated}. We can now prove the main theorem.

\begin{proof}[Proof of Theorem~\ref{thm:gen_apcg}]
This proof follows the same general structure as Nesterov and Stich ~\citep{nesterov2017efficiency}. In particular, it follows from expanding the $\|v_{t+1} - \theta^\star\|^2$ term. In the original proof, $v_{t+1} = w_t - g$ where $g$ is a gradient term so the expansion is rather straightforward. In our case, $v_{t+1}$ is defined by a proximal mapping so a bit more work is required. Yet, similar terms will appear, plus the function values of the non-smooth term that we control with Lemma~\ref{lemma:lyapunov_psi}. We start by showing the following equality:
\begin{align}
\label{eq:sc_prox}
\begin{split}
    \frac{B_{t+1}p_i}{2a_{t+1}} [\|v_{t+1} - \theta^\star\|^2_{A^\dagger A} + &\|v_{t+1} - w_t\|^2_{A^\dagger A} - \|\theta^\star - w_t\|^2_{A^\dagger A}] \\
    &\leq \langle \nabla_i f_A(y_t), \theta^\star - v_{t+1} \rangle_{A^\dagger A} + \psi_i\left({\theta^\star}^{(i)}\right) - \psi_i\left(v_{t+1}^{(i)}\right).
    \end{split}
\end{align}
When $\psi_i = 0$, it follows from using $v_{t+1} = w_t - \frac{a_{t+1}}{B_{t+1} p_i}\nabla_i f_A(y_t)$ and basic algebra (expanding the squared terms).

When $\psi_i \neq 0$, $A^\dagger Ae_i = e_i$ because $e_i^TA^\dagger Ae_i = 1$ and $A^\dagger A$ is a projector. Therefore, we obtain 
\begin{equation}
\label{eq:vt1proj}
    \|v_{t+1} - \theta^\star\|^2_{A^\dagger A} - \|w_t - \theta^\star\|^2_{A^\dagger A} = \|v_{t+1}^{(i)} - {\theta^\star}^{(i)}\|^2 - \|w_t^{(i)} - {\theta^\star}^{(i)}\|^2,
\end{equation}
because $v_{t+1}$ is equal to $w_t$ for coordinates other than $i$. We now use the strong convexity of $V_i^t$ at points $v_{t+1}^{(i)}$ (its minimizer, by definition) and ${\theta^\star}^{(i)}$ ($i$-th coordinate of a minimizer of $F$) to write that $V_i^t(v_{t+1}^{(i)}) + \frac{B_{t+1}p_i}{2a_{t+1}}\|v_{t+1}^{(i)} - {\theta^\star}^{(i)}\|^2 \leq V_i^t({\theta^\star}^{(i)})$. This is a key step from the proof of Lin et al.~\citep{lin2015accelerated}. Then, expanding the $V_i^t$ terms yields:
\begin{align*}
    \frac{B_{t+1}p_i}{2a_{t+1}} &\left[\|v_{t+1}^{(i)} - {\theta^\star}^{(i)} \|^2 + \|v_{t+1}^{(i)} - w_t^{(i)} + \frac{a_{t+1}}{B_{t+1} p_i}\nabla_i f_A(y_t)\|^2 - \|\theta^\star - w_t + \frac{a_{t+1}}{B_{t+1} p_i}\nabla_i f_A(y_t)\|^2 \right]\\
    &\leq  \psi_i\left({\theta^\star}^{(i)}\right) - \psi_i\left(v_{t+1}^{(i)}\right).
\end{align*}
We can now retrieve Equation~\eqref{eq:sc_prox} by pulling gradient terms out of the squares and using Equation~\eqref{eq:vt1proj}. We now evaluate each term of Equation~\eqref{eq:sc_prox}. First of all, we use the form of $x_{t+1}$ and the fact that $w_t - v_{t+1} = e_i^T(w_t - v_{t+1})$ (only one coordinate is updated) to show:
\begin{align*}
    \esp{\frac{a_{t+1}}{p_i}\langle \nabla_i f_A(y_t), \theta^\star - v_{t+1} \rangle_{A^\dagger A}} &= a_{t+1} \esp{\langle \frac{1}{p_i}\nabla_i f_A(y_t), \theta^\star - w_t\rangle_{A^\dagger A}}\\
    &+ A_{t+1}\esp{\langle \nabla_i f_A(y_t), \frac{\alpha_t}{p_i}(w_t - v_{t+1}) \rangle_{A^\dagger A}} \\
    &= a_{t+1} \langle \nabla f_A(y_t), \theta^\star - w_t \rangle_{A^\dagger A} + A_{t+1}\esp{\langle \nabla_i f_A(y_t), y_t - x_{t+1} \rangle},
\end{align*}
where we used that $R_i = e_i^TA^\dagger Ae_i$ and $y_t - x_{t+1} = \frac{\alpha_t R_i}{p_i}(w_t - v_{t+1})$. 

The rest of this proof closely follows the analysis from Hendrikx et al.~\citep{hendrikx2018accelerated}, which is an adaptation of Nesterov and Stich~\citep{nesterov2017efficiency} to strong convexity only on a subspace. The main difference is that it is also necessary to control the function values of $\psi$, which is done using Lemma~\ref{lemma:lyapunov_psi}. For the first term, we use the strong convexity of $f$ as well as the fact that $w_t = y_t - \frac{1 - \alpha_t}{\alpha_t} (x_t - y_t)$ to obtain:
\begin{align*}
a_{t+1}& \nabla f_A(y_t)^T A^\dagger A (\theta^\star - w_t) = a_{t+1} \nabla f_A(y_t)^T A^\dagger A \left(\theta^\star - y_t + \frac{1 - \alpha_t}{\alpha_t}(x_t - y_t)\right)\\
&\leq a_{t+1} \left(f_A(\theta^\star) - f_A(y_t) - \frac{1}{2} \sigma_A \|y_t - \theta^\star\|^2_{A^\dagger A} + \frac{1 - \alpha_t}{\alpha_t}(f_A(x_t) - f_A(y_t)) \right) \\
&\leq a_{t+1} f_A(\theta^\star) - A_{t+1} f_A(y_t) + A_t f_A(x_t) - \frac{1}{2} a_{t+1} \sigma_A \|y_t - \theta^\star\|^2_{A^\dagger A}.
\end{align*}
For the second term, we use the fact that $x_{t+1} - y_t$ has support on $e_i$ only (just like $v_{t+1} - w_t$) and the directional smoothness of $f_A$ to obtain:
\begin{align*}
    A_{t+1} \langle \nabla_i f_A(y_t), y_t - x_{t+1} \rangle &\leq A_{t+1} \left[f_A(y_t) - f_A(x_{t+1}) + \frac{M_i}{2}\|x_{t+1} - y_t\|^2\right] \\
    &\leq A_{t+1} \left(f_A(y_t) - f_A(x_{t+1})\right) +\frac{B_{t+1}}{2} \frac{M_i R_i}{p_i^2} \frac{a_{t+1}^2}{A_{t+1}B_{t+1}} R_i\|e_i^T (v_{t+1} - w_t)\|^2\\
    &\leq A_{t+1} \left(f_A(y_t) - f_A(x_{t+1})\right) +\frac{B_{t+1}}{2} \|v_{t+1} - w_t\|^2_{A^\dagger A}.
\end{align*}
Noting $\Delta f_A(x_t) = \esp{f(x_t)} - f_A(\theta^\star)$ and remarking that $a_{t+1} = A_{t+1} - A_t$, we obtain, using that $\alpha_t = \frac{a_{t+1}}{A_{t+1}}$: 
\begin{align*}
    \esp{\frac{a_{t+1}}{p_i}\langle \nabla_i f_A(y_t), \theta^\star - v_{t+1} \rangle_{A^\dagger A}} \leq A_t \Delta f_A(x_t)& - A_{t+1} \Delta f_A(x_{t+1}) + \frac{B_{t+1}}{2}\esp{ \|w_t - v_{t+1}\|^2_{A^\dagger A}}\\
    &- \frac{a_{t+1}\sigma_A}{2}\|y_t - \theta^\star\|^2_{A^\dagger A}.
\end{align*}
Using Lemma~\ref{lemma:lyapunov_psi}, we derive in the same way:
\begin{align*}
    \esp{\frac{a_{t+1}}{p_i} \left[\psi_i\left({\theta^\star}^{(i)}\right) - \psi_i\left(v_{t+1}^{(i)}\right)\right]} &= a_{t+1}\psi(\theta^\star) - A_{t+1} \alpha_t  \psi(\tilde{v}_{t+1})\\
    &\leq A_t \left(\hat{\psi}_t - \psi(\theta^\star)\right) - A_{t+1}\left(\hat{\psi}_{t+1} - \psi(\theta^\star) \right).
\end{align*}
Now, we can multiply Equation~\eqref{eq:sc_prox} by $\frac{a_{t+1}}{p_i}$ and take the expectation over $i$. The $\|v_{t+1} - w_t\|^2_{A^\dagger A}$ terms cancel and we obtain:
\begin{align*}
\frac{B_{t+1}}{2}\esp{\|v_{t+1} - \theta^\star\|^2_{A^\dagger A}} + A_{t+1} \Delta \hat{F}_A(x_{t+1}) \leq A_t \Delta \hat{F}_A(x_t) + \frac{B_{t+1}}{2}\|w_t - \theta^\star\|^2_{A^\dagger A} - \frac{a_{t+1}\sigma_A}{2}\|y_t - \theta^\star\|^2_{A^\dagger A},
\end{align*}
where $\Delta \hat{F}_A(x_t) = \Delta f_A(x_t) +  \esp{\hat{\psi}_t} - \psi(\theta^\star)$. Convexity of the squared norm yields $\|w_t - \theta^\star\|^2_{A^\dagger A} \leq (1 - \beta_t)\|v_t - \theta^\star\|^2_{A^\dagger A} + \beta_t \|y_t - \theta^\star\|^2_{A^\dagger A}$. Now remarking that $B_{t+1}(1 - \beta_t) = B_t$ and $a_{t+1}\sigma_A = B_{t+1} \beta_t$, and summing the inequalities until $t=0$, we obtain:
\begin{align*}
B_{t}\|v_{t} - \theta^\star\|^2_{A^\dagger A} + 2A_{t} \Delta \hat{F}_A(x_{t}) \leq 2A_0 \Delta F_A(x_0) + B_0 \|v_0 - \theta^\star\|^2_{A^\dagger A}.
\end{align*}
We finish the proof by using the fact that $\psi(x_t) \leq \hat{\psi}_t$ and $\psi(x_0) = \hat{\psi}_0$ since $x_0 = v_0$.
\end{proof} 

Now that we have proven Theorem~\ref{thm:gen_apcg}, we can proceed to the proof of Lemma~\ref{lemma:lyapunov_psi}.

\begin{proof}[Proof of Lemma~\ref{lemma:lyapunov_psi}]
This lemma is a generalization of the lemma from APCG with arbitrary probabilities (instead of uniform ones). It still uses the fact that $x_t$ can be written as a convex combination of $(v_l)_{l \leq t}$, but it requires to use a different convex combination for each coordinate of $x_t$, thus crucially exploiting the separability of the proximal term. If coordinate $i$ is such that $\psi_i = 0$, then $\hat \psi^{(i)}_{t+1} \leq \alpha_t \psi_i(\tilde v_{t+1}^{(i)}) + (1 - \alpha_t) \hat \psi^{(i)}_{t}$ is automatically satisfied for any $\delta_t^{(i)}$. For coordinates $i$ such that $\psi_i \neq 0$ (and so $R_i = 1$), we start by expressing $x_{t+1}$ in terms of $x_t$, $v_{t+1}$ and $v_t$ . More precisely, we write that for any $t > 0$:
\begin{equation*}
     x_{t+1}^{(i)} = y_t^{(i)} + \frac{\alpha_t}{p_i} (v_{t+1}^{(i)} - w_t^{(i)}).
\end{equation*}
Indeed, either coordinate $i$ is updated at time $t$ or $v_{t+1}^{(i)} = w_t^{(i)}$ so the previous equation always holds. We can then develop the $w_t$ and $y_t$ terms to obtain $x_{t+1}^{(i)}$ only in function of $x_t^{(i)}$, $v_t^{(i)}$ and $v_{t+1}^{(i)}$:
\begin{align*}
    x_{t+1}^{(i)} 
    &= \frac{\alpha_t}{p_i}v_{t+1}^{(i)} + \left(1 - \frac{\alpha_t \beta_t}{p_i}\right) y_t^{(i)}  - \frac{\alpha_t(1 - \beta_t)}{p_i}v_{t}^{(i)}\\
    &= \frac{\alpha_t}{p_i}v_{t+1}^{(i)} + \left(1 - \frac{\alpha_t \beta_t }{p_i}\right)\frac{(1 - \alpha_t) x_t^{(i)} + \alpha_t(1 - \beta_t)v_t^{(i)}}{1 - \alpha_t \beta_t}  - \frac{\alpha_t(1 - \beta_t)}{p_i}v_{t}^{(i)}\\
    &= \frac{\alpha_t }{p_i}v_{t+1}^{(i)} + \alpha_t(1 - \beta_t)\left[\frac{1 - \frac{\alpha_t \beta_t }{p_i}}{1 - \alpha_t \beta_t} - \frac{1}{p_i} \right]v_t^{(i)} +  \left(1 - \frac{\alpha_t \beta_t }{p_i}\right)\frac{(1 - \alpha_t)}{1 - \alpha_t \beta_t} x_t^{(i)}\\
    &= \frac{\alpha_t }{p_i}v_{t+1}^{(i)} + \frac{\alpha_t(1 - \beta_t)}{1 - \alpha_t \beta_t}\left(1 - \frac{1}{p_i}\right)v_t^{(i)} + \left(1 - \frac{\alpha_t \beta_t }{p_i}\right)\frac{(1 - \alpha_t)}{1 - \alpha_t \beta_t} x_t^{(i)} .
\end{align*}
At this point, all coefficients sum to 1. Indeed, they all sum to 1 at the first line and we have expressed $w_t^{(i)}$ and then $y_t^{(i)}$ as convex combinations of other terms, thus keeping the value of the sum unchanged. Yet, $p_i < 1$ so the coefficient on the second term is negative. Fortunately, it is possible to show that the $v_t^{(i)}$ term in the decomposition of $x_t^{(i)}$ is large enough so that the $v_t^{(i)}$ term in the decomposition of $x_{t+1}^{(i)}$ is positive. More precisely, we now show by recursion that for $t \geq 0$:
\begin{equation}
\label{eq:xt_convex}
    x_{t+1}^{(i)} = \frac{\alpha_t }{p_i}v_{t+1}^{(i)} + \sum_{l=0}^{t} \delta^{(i)}_{t+1}(l) v_l^{(i)},
\end{equation}
with $\delta^{(i)}_{t+1}(l) \geq 0$ for $l \leq t$. For $t=0$, $x_0 = v_0$ and $x_1^{(i)} = \frac{\alpha_0 }{p_i}v_1^{(i)} + \left( 1 - \frac{\alpha_0 }{p_i}\right)v_0^{(i)}$. We now assume that Equation~\eqref{eq:xt_convex} holds for a given $t > 0$, and expand $\delta_{t+1}^{(i)}(t)$ to show that it is positive. Using that $\delta_{t}^{(i)}(t) = \frac{\alpha_t}{p_i}$, we write:
\begin{align*} 
    \delta_{t+1}^{(i)}(t) = &\frac{\alpha_t(1 - \beta_t)}{1 - \alpha_t \beta_t}\left(1 - \frac{1}{p_i}\right) + \frac{\alpha_t }{p_i} \left(1 - \frac{\alpha_t \beta_t }{p_i}\right)\frac{(1 - \alpha_t)}{1 - \alpha_t \beta_t} \\
    &= \frac{\alpha_t}{1 - \alpha_t \beta_t}\left[(1 - \beta_t) \left(1 - \frac{1}{p_i}\right) + \frac{(1 - \alpha_t) }{p_i}\left(1 - \frac{\alpha_t \beta_t }{p_i}\right)\right]\\
    &= \frac{\alpha_t}{1 - \alpha_t \beta_t}\left[1 - \beta_t - \frac{1}{p_i} + \frac{\beta_t }{p_i} + \frac{1}{p_i} - \frac{\alpha_t }{p_i} - (1 - \alpha_t)\frac{\alpha_t \beta_t}{p_i^2}\right]\\
    &= \frac{\alpha_t}{1 - \alpha_t \beta_t}\left[\left(1 - \beta_t - \frac{\alpha_t }{p_i}\right) + \frac{\beta_t }{p_i}\left(1 - (1 - \alpha_t)\frac{\alpha_t }{p_i}\right)\right].
\end{align*}
We conclude that $\delta_{t+1}^{(i)}(t) \geq 0$ since $1 - \beta_t - \frac{\alpha_t }{p_i} \geq 0$. Note that this condition can be weakened to $1 - \frac{\alpha_t^2 }{p_i^2} \geq 0$ when $\beta_t = \alpha_t$ or when $\beta_t = 0$. We also deduce from the form of $x_{t+1}^{(i)}$ that for $l < t$, the only coefficients on $v_l^{(i)}$ in the development of $x_{t+1}^{(i)}$ come from the $x_t^{(i)}$ term and so:
\begin{equation}
    \delta_{t+1}^{(i)}(l) = \left(1 - \frac{\alpha_t \beta_t }{p_i}\right)\frac{(1 - \alpha_t)}{1 - \alpha_t \beta_t} \delta_t^{(i)}(l),
\end{equation}
so these coefficients are positive as well. Since they also sum to $1$, it implies that $x_t^{(i)}$ is a convex combination of the $v_l^{(i)}$ for $l \leq t$, and we use the convexity of $\psi_i$ to write: $$\psi_i(x_t^{(i)}) = \psi_i\left(\sum_{l=0}^t \delta_t^{(i)}(l) v_l^{(i)}\right) \leq \sum_{l=0}^t \delta_t^{(i)}(l) \psi_i(v_l^{(i)}) =  \hat{\psi}^{(i)}_t.$$ 

Now, we can properly express $\hat{\psi}^{(i)}_{t+1}$ using the decomposition of $x_{t+1}^{(i)}$ in terms of $\delta_{t+1}^{(i)}$:
\begin{align*}
    \esp{\hat{\psi}^{(i)}_{t+1}} &= \esp{\frac{\alpha_t }{p_i}\psi_i(v_{t+1}^{(i)})} + \frac{\alpha_t(1 - \beta_t)}{1 - \alpha_t \beta_t}\left(1 - \frac{1}{p_i}\right)\psi_i(v_t^{(i)}) + \left(1 - \frac{\alpha_t \beta_t }{p_i}\right)\frac{1 - \alpha_t}{1 - \alpha_t \beta_t}\sum_{l=0}^t \delta_t^{(i)}(l) \psi_i(v_l^{(i)})\\
    &= \alpha_t \psi_i(\tilde v_{t+1}^{(i)}) + \left(1 - p_i\right)\frac{\alpha_t}{p_i}\psi_i(w_{t}^{(i)})  + \frac{\alpha_t(1 - \beta_t)}{1 - \alpha_t \beta_t}\left(1 - \frac{1}{p_i}\right)\psi_i(v_t^{(i)}) + \left(1 - \frac{\alpha_t \beta_t }{p_i}\right)\frac{1 - \alpha_t}{1 - \alpha_t \beta_t}\hat{\psi}^{(i)}_{t}\\
\end{align*}
At this point, we use the convexity of $\psi_i$ to develop $\psi_i(w_{t}^{(i)})$ and then $\psi_i(y_{t}^{(i)})$ in the following way:
\begin{align*}
     \psi_i(w_{t}^{(i)}) &\leq (1 - \beta_t)\psi_i(v_{t}^{(i)}) + \beta_t \psi_i(y_{t}^{(i)}) \\
     &\leq (1 - \beta_t)\psi_i(v_{t}^{(i)}) + \frac{\beta_t}{1 - \alpha_t \beta_t}\left[(1 - \alpha_t) \psi_i(x_{t}^{(i)}) + \alpha_t(1 - \beta_t) \psi_i(v_{t}^{(i)})\right]\\
     &= \frac{1 - \beta_t}{1 - \alpha_t \beta_t}\psi_i(v_{t}^{(i)}) + \frac{\beta_t(1 - \alpha_t)}{1 - \alpha_t \beta_t} \psi_i(x_{t}^{(i)}).
\end{align*}

If we plug these expressions into the development of $\esp{\hat{\psi}^{(i)}_{t+1}}$, the $\psi_i(v_{t}^{(i)})$ terms cancel and we obtain:

\begin{align*}
    \esp{\hat{\psi}^{(i)}_{t+1}} &\leq \alpha_t \psi_i(\tilde v_{t+1}^{(i)}) + \alpha_t\left(\frac{1}{p_i} - 1\right)\frac{\beta_t(1 - \alpha_t)}{1 - \alpha_t \beta_t} \psi_i(x_{t}^{(i)}) + \left(1 - \frac{\alpha_t \beta_t }{p_i}\right)\frac{1 - \alpha_t}{1 - \alpha_t \beta_t}\hat{\psi}^{(i)}_{t}\\
\end{align*}

We now use the fact that $\psi_i(x_{t}^{(i)}) \leq \hat{\psi}^{(i)}_{t}$ (by convexity of $\psi_i$) to get:

\begin{align*}
    \esp{\hat{\psi}^{(i)}_{t+1}} &\leq \alpha_t \psi_i(\tilde v_{t+1}^{(i)}) + \frac{1 - \alpha_t}{1 - \alpha_t \beta_t}\left[\alpha_t \beta_t \left(\frac{1}{p_i} - 1\right) + \left(1 - \frac{\alpha_t \beta_t }{p_i}\right)\right]\hat{\psi}^{(i)}_{t}\\
    &\leq \alpha_t \psi_i(\tilde v_{t+1}^{(i)}) + (1 - \alpha_t)\hat{\psi}^{(i)}_{t}
\end{align*}

This holds for any coordinate $i$ and so $\esp{\hat{\psi}_{t+1}} \leq \alpha_t \psi(\tilde v_{t+1} + (1 - \alpha_t)\hat{\psi}_{t}$ for all $t \geq 0$, which finishes the proof of the lemma.
\end{proof}

\subsection{Proof of the corollaries}
Now that that we have proven the main result, we show how specific choices of parameters lead to fast algorithms. 

\begin{proof}[Proof of Corollary~\ref{corr:sc_apcg}]
If $\sigma_A > 0$, then the parameters can be chosen as $\alpha_t = \beta_t = \rho = \frac{\sqrt{\sigma_A}}{S}$, with $A_t = (1 - \rho)^{-t}$ and $B_t = \sigma_A A_t$. These expressions can then be plugged into the recursion to verify that they do satisfy it. This choice is classic and slightly suboptimal for small values of $t$ compared with the choice made by Nesterov and Stich~\citep{nesterov2017efficiency}.

Yet, it remains to prove that $\alpha_t R_i \leq p_i$ to verify the assumptions of Theorem~\ref{thm:gen_apcg}. This assumption was directly verified in the case of APCG thanks to the uniform probabilities. We show that this also holds in the arbitrary-sampling formulation with strong convexity on a subspace, and this result validates our choice of parameters. In particular, we write: 
$$\alpha_t R_i = \rho R_i \leq \frac{\sqrt{\sigma_A}}{S} R_i \leq \sqrt{\frac{\sigma_A R_i}{M_i}} p_i.$$
Then, we take $x^\star$ such that $\nabla f_A(x^\star) = 0$ and use the smoothness and $(A^\dagger A)$-strong convexity of $f_A$  to write that for any coordinate $i$ and $h > 0$:
$$\frac{M_i}{2}\|h e_i\|^2 \geq f(x^\star + h e_i) - f(x^\star) \geq \frac{\sigma_A}{2}\|h e_i\|_{A^\dagger A}.$$

In particular, this means that $M_i \geq \sigma_A R_i$, which means that $\alpha_t R_i \leq p_i$ for all $i$.
\end{proof}

\begin{proof}[Proof of Corollary~\ref{corr:cvx_apcg}]
If $\sigma_A = 0$ then we have to choose $\beta_t = 0$ for all $t$. Then, we can choose $B_t = B_0$ for a any $B_0 > 0$ and a direct recursion yields: $$A_{t+1} = A_t + \frac{B_0}{2S^2}\left(1 + \sqrt{1 + 4S^2 B_0^{-1} A_t}\right).$$
This in particular shows that $(A_t)$ is an increasing sequence. Coefficients $(a_t)$ can be computed using
$$a_{t+1} = A_{t+1} - A_t = \frac{B_0}{2S^2}\left(1 + \sqrt{1 + 4S^2 B_0^{-1} A_t}\right),$$
and so the sequence $(\alpha_t)$ is given by:
$$\alpha_t = \frac{a_{t+1}}{A_{t+1}} = 1 - \frac{A_t}{A_{t+1}} = 1 - \frac{A_t}{A_t + \frac{B_0}{2S^2}\left(1 + \sqrt{1 + 4S^2b^{-1}A_t}\right)}.$$
We would like to choose $A_0 = 0$ but this would lead to $\alpha_0 = 1$ and would not respect $\alpha_t R_i \leq p_i$ for all $i$ so we choose $A_0 > 0$. Since $(A_t)$ is an increasing sequence, $(\alpha_t)$ is a decreasing sequence, and in particular it is enough to verify that $\alpha_0 \leq p_R$ with $p_R = \min_i p_i / R_i$ to have that $\alpha_t R_i \leq p_i$ for all $i$ and all $t$. Therefore, we want $A_0 > 0$, such that:
$$p_R \geq 1 - \frac{1}{1 + \frac{B_0}{2A_0S^2}\left(1 + \sqrt{1 + 4S^2B_0^{-1}A_0}\right)},$$
which can be rewritten $\frac{p_R}{1 - p_R} \geq \frac{1}{X}\left(1 + \sqrt{1 + 2X}\right)$ with $X = 2S^2B_0^{-1}A_0$. If $p_R \geq 1$ then the equation is verified for any value of $A_0$ and we could actually even have chosen $A_0 = 0$. Otherwise, we choose $X \geq 6 / p_R^2 \geq 6$. Then:
$$\frac{p_R}{1 - p_R} \geq p_R \geq \frac{\sqrt{6}}{\sqrt{X}} = \frac{\sqrt{X}(\sqrt{6} - \sqrt{2})}{X} + \frac{1}{X}\sqrt{2X} \geq \frac{2}{X} + \frac{\sqrt{2X}}{X} \geq \frac{1}{X}(1 + \sqrt{1 + 2X}),$$
where we have used that $\sqrt{X}(\sqrt{6} - \sqrt{2}) \geq 2$ for $X \geq 6$ and $1 + \sqrt{2X} \geq \sqrt{1 + 2X}$. In particular, the constraint $\alpha_t R_i \leq p_i$ is satisfied with the choice:
$$A_0 = \frac{3 B_0}{S^2 p_R^2}.$$
Note that the constant $3$ is not tight but allows for an easy expression which always hold. Since $A_0 > 0$, a direct recursion yields $A_t \geq \frac{B_0 t^2}{4S^2}$. We call $r_t^2 = \|v_0 - \theta^\star_A\|^2_{A^\dagger A} - \mathbb{E}[\|v_t - \theta^\star_A\|^2_{A^\dagger A}]$, and $F_t =  \mathbb{E}[F_A(x_t)] - F_A(\theta^\star_A)$, then:

$$F_t \leq \frac{1}{2A_t}\left(B_0r_t^2 + 2A_0F_0\right) = \frac{B_0}{2A_t}\left(r_t^2 + \frac{2}{S^2 p_{\min}^2}F_0\right) \leq \frac{2S^2}{t^2}\left(r_t^2 + \frac{6}{S^2 p_{R}^2}F_0\right),$$ which finishes the proof.

\end{proof}

\section{Algorithm Derivation}
\label{app:algo_derivations}
\subsection{Projection of virtual edges}
Theorem~\ref{app:generalized_apcg} requires that for any coordinate $i$, either the proximal part $\psi_i = 0$ or the coordinate is such that $e_i^TA^\dagger A e_i = 1$, which is equivalent to having $A^\dagger A e_i = e_i$. In our case, $\psi_{k\ell} = 0$ when $(k,\ell)$ is a communication edge. Lemma~\ref{lemma:acrossa} is a small result that shows that the projection condition is satisfied by virtual edges.
\begin{lemma}
\label{lemma:acrossa}
If $(k,\ell)$ is a virtual edge then $R_{k\ell} = 1$.
\end{lemma}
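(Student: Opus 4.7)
My plan is to exploit the fact that $A^\dagger A$ is the orthogonal projector onto $\mathrm{Ker}(A)^\perp$, so that proving $R_{k\ell}=e_{k\ell}^T A^\dagger A e_{k\ell}=1$ amounts to showing $e_{k\ell}\in \mathrm{Ker}(A)^\perp$ when $(k,\ell)$ is a virtual edge. Equivalently, I want to verify that every $v\in \mathrm{Ker}(A)$ satisfies $v_{k\ell}=0$ on virtual coordinates.

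The key structural observation is that in the augmented graph, each virtual node $(i,j)$ is a leaf: it is connected only to the communication node $i$ through the single virtual edge $\lambda_{ij}$. Consequently, the row of $A$ indexed by the node $(i,j)$ has exactly one nonzero entry, namely $\pm \mu_{ij}$ in the column corresponding to that virtual edge. So for any $v\in \mathbb{R}^{E+nm}$, the $(i,j)$-th coordinate of $Av$ reduces to $\pm\mu_{ij} v_{ij}$.

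From this I would conclude that if $Av=0$ then necessarily $v_{ij}=0$, i.e. $e_{ij}^T v = 0$ for every $v\in \mathrm{Ker}(A)$. This places $e_{ij}$ in $\mathrm{Ker}(A)^\perp$, and therefore $A^\dagger A e_{ij}=e_{ij}$, giving $R_{ij}=\|e_{ij}\|^2=1$ as required. Since the statement only concerns virtual edges, this covers all cases in the lemma.

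The reasoning is short and contains no real obstacle; the only point that needs to be stated carefully is the identification of $A^\dagger A$ with the projector onto $\mathrm{Row}(A)=\mathrm{Ker}(A)^\perp$, together with the leaf property of virtual nodes that follows directly from the construction of the augmented graph in Section~\ref{sec:model}. Everything else is immediate from the definitions of $A$ and $R_{k\ell}$.
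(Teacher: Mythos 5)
Your proof is correct and takes essentially the same approach as the paper: both reduce $R_{k\ell}=1$ to showing $e_{k\ell}\in\mathrm{Ker}(A)^\perp$ and then use the fact that virtual nodes have degree one in the augmented graph. Your direct computation of the $(i,j)$-th coordinate of $Av=0$, which immediately forces $v_{ij}=0$, is a slightly more concrete route than the paper's appeal to the cycle-space characterization of $\mathrm{Ker}(A)$ (``virtual edges lie on no cycle''), but the underlying idea and all the ingredients are identical.
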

\begin{proof}
Let $x \in \mathbb{R}^{E + nm}$ such that $Ax = 0$. From the definition of $A$, either $x = 0$ or the support of $x$ is a cycle of the graph. Indeed, for any edge $(k,\ell)$, $Ae_{k\ell}$ has non-zero weights only on nodes $k$ and $\ell$. Virtual nodes have degree one, so virtual edges are parts of no cycles and therefore $x^T e_{k,\ell} = 0$ for all virtual edges $(k,\ell)$. Operator $A^\dagger A$ is the projection operator on the orthogonal the kernel of $A$, so it is the identity on virtual edges. 
\end{proof}

\subsection{From edge variables to node variables}
Taking the dual formulation implies that variables are associated with edges rather than nodes. Although it could be possible to work with edge variables, it is generally inefficient. Indeed, the algorithm needs variable $Ay_t$ instead of variable $y_t$ for the gradient computation so standard methods work directly with $Ay_t$ \citep{scaman2017optimal, hendrikx2018accelerated}.

In this section, we call $\tilde v_t$, $\tilde y_t$ and $\tilde z_t$ the dual variable sequences in $\mathbb{R}^{E + nm}$ obtained by applying Algorithm~\ref{algo:generalized_apcg} on the dual problem of Equation~\ref{eq:dual_problem}. The new update equations can be retrieved by multiplying each line of Algorithm~\ref{algo:generalized_apcg} by $A$ on the left, so that for example $v_t = A \tilde v_t$. Yet, there is still a $\tilde z_{t+1}$ term because of the presence of the proximal update. More specifically, we write for the virtual edge between node $i$ and its $j$-th virtual node:
\begin{equation}
    \tilde v_{t+1}^T e_{ij} = {\rm prox}_{\eta_{ij} \psi_{i,j}}\left(\tilde z_{t+1}^T e_{ij}\right).
\end{equation}
Fortunately, this update only modifies $\tilde v_{t+1}$ when $\psi_{i,j} \neq 0$. This means that $z_{t+1}$ is only modified for local computation edges. Since local computation nodes only have one neighbour, the form of $A$ ensures that for any $\tilde z \in \mathbb{R}^{n(1 + m)}$ and virtual edge $(k, \ell)$ corresponding to node $i$ and its $j$-th virtual node, $(A\tilde z)^{(i,j)} = - \mu_{k\ell} \tilde z_{k\ell}$. In particular, if node $k$ is the center node $i$ and node $\ell$ is the virtual node $(i,j)$, the proximal update can be rewritten:
\begin{align*}
    \left(A \tilde v_{t+1}\right)^{(i,j)} &= - \mu_{ij} {\rm prox}_{\eta_{ij} \psi_{i,j}}\left(- \frac{1}{\mu_{ij}} (A \tilde z_{t+1})^{(i,j)}\right)\\
    &= - \mu_{ij} \arg\min_v \frac{1}{2\eta_{ij}}\|v - \left(- \frac{1}{\mu_{ij}} (A \tilde z_{t+1})^{(i,j)}\right)\|^2 + \psi_{i,j}(v)\\
    &= - \mu_{ij} \arg\min_v \frac{1}{2\eta_{ij}\mu_{ij}^2}\|- \mu_{ij}v - (A \tilde z_{t+1})^{(i,j)}\|^2 + f_{i,j}^*(- \mu_{ij} v) - \frac{\mu_{ij}^2}{2L_{i,j}}\|v\|^2\\
    &= \arg\min_{\tilde{v}} \frac{1}{2\eta_{ij}\mu_{ij}^2}\|\tilde{v} - (A \tilde z_{t+1})^{(i,j)}\|^2 + f_{i,j}^*(\tilde{v}) - \frac{1}{2L_{i,j}}\|\tilde{v}\|^2\\
    &= {\rm prox}_{\eta_{ij}\mu_{ij}^2\tilde{f}^*_{i,j}}\left(\left(A \tilde z_{t+1}\right)^{(i,j)}\right),
\end{align*}
where $\tilde{f}^*_{i,j}: x \rightarrow f_{i,j}^*(x) - \frac{1}{2L_{i,j}}\|x\|^2$. For the center node, the update can be written:
\begin{align*}
    \left(A \tilde v_{t+1}\right)^{(i)} &= \left(A \tilde z_{t+1}\right)^{(i)} - \mu_{ij} e_{ij}^T \tilde z_{t+1} + \mu_{ij} {\rm prox}_{\eta_{ij} \psi_{i,j}}\left(- \frac{1}{\mu_{ij}} \left(A \tilde z_{t+1}\right)^{(i,j)}\right)\\
    &= \left(A \tilde z_{t+1}\right)^{(i)} +  \left(A \tilde z_{t+1}\right)^{(i,j)} - {\rm prox}_{\eta_{ij}\mu_{k\ell}^2\tilde{f}^*_{i,j}}\left(\left(A \tilde z_{t+1}\right)^{(i,j)}\right).
\end{align*}

\subsection{Primal proximal updates}
\label{app:algo_derivations_primal_prox}
 Moreau identity~\citep{parikh2014proximal} provides a way to retrieve the proximal operator of $f^*$ using the proximal operator of $f$, but this does not directly apply to $\tilde{f}^*_{i,j}$, making its proximal update hard to compute when no analytical formula is available to compute $\tilde{f}^*_{i,j}$. Fortunately, the proximal operator of $\tilde{f}^*_{i,j}$ can be retrieved from the proximal operator of $f^*_{i,j}$. More specifically, if we denote $\tilde \eta_{ij} = \eta_{ij}\mu_{ij}^2$ (it is clear in this section that they refer to the edge between node $i$ and its virtual node $j$), then we can also express the update only in terms of $f_{i,j}^*$:
\begin{align*}
    {\rm prox}_{\tilde{\eta}_{ij} \tilde{f}^*_{i,j}}\left(\left(A \tilde z_{t+1}\right)^{(i,j)}\right) &= \arg\min_v \frac{1}{2\tilde{\eta}_{ij}} \|v - \left(A\tilde z_{t+1}\right)^{(i,j)}\|^2 + \tilde{f}^*_{i,j}(v) - \frac{1}{2L_{i,j}}\|v\|^2\\
    &= \arg\min_v \frac{1}{2}\left(\tilde{\eta}_{ij}^{-1} - L_{i,j}^{-1}\right)\|v\|^2 - \tilde{\eta}_{ij}^{-1} v^T \left(A \tilde z_{t+1}\right)^{(i,j)} + \tilde{f}^*_{i,j}(v) \\
    &= \arg\min_v \frac{1}{2\left(\tilde{\eta}_{ij}^{-1} - L_{i,j}^{-1}\right)^{-1}}\|v - \left(1 - \tilde{\eta}_{ij} L_{i,j}^{-1}\right)^{-1} \left(A \tilde z_{t+1}\right)^{(i,j)}\|^2 + \tilde{f}^*_{i,j}(v) \\
    &= {\rm prox}_{\left(\tilde{\eta}_{ij}^{-1} - L_{i,j}^{-1}\right)^{-1} f^*_{i,j}}\left(\left(1 - \tilde{\eta}_{ij} L_{i,j}^{-1}\right)^{-1} \left(A \tilde z_{t+1}\right)^{(i,j)}\right).
\end{align*}
Then, we use the identity:
\begin{equation}
    {\rm prox}_{\left(\eta f\right)^*}(x) = \eta {\rm prox}_{\eta^{-1} f^*}\left(\eta^{-1} x\right),
\end{equation}

and the Moreau identity to write that:
\begin{equation}
    {\rm prox}_{\eta f^*}(x) = x -  \eta {\rm prox}_{\eta^{-1} f}\left(\eta^{-1} x\right).
\end{equation}

This allows us to retrieve the proximal operator on $\tilde{f}^*_{i,j}$ using only the proximal operator on $f_{i,j}$:
\begin{equation}
    \left(1 - \tilde{\eta}_{ij} L_{i,j}^{-1}\right) {\rm prox}_{\tilde{\eta}_{ij} \tilde{f}^*_{i,j}}\left(\left(A \tilde z_{t+1}\right)^{(i,j)}\right) = \left(A\tilde z_{t+1}\right)^{(i,j)} - \tilde{\eta}_{ij} {\rm prox}_{\left(\tilde{\eta}_{ij}^{-1} - L_{i,j}^{-1}\right) f} \left(\tilde{\eta}_{ij}^{-1} \left(A\tilde z_{t+1}\right)^{(i,j)}\right).
\end{equation}

Note that the previous calculations are valid as long as  $\tilde{\eta}_{ij} L_{i,j}^{-1} \leq 1$ for all virtual edges. A way to bound this is to replace by the values of $\mu_{ij}^2$ and $\sigma_A$ to get: 
$$\rho \leq \frac{\kappa_i}{2\kappa} p_{ij}.$$
The constraint $\rho < \min_{ij} p_{ij}$ was already enforced by APCG, so this simply gives another constraint that is generally verified unless nodes have very different local objectives (which should not happen if $m$ is big enough). 

\subsection{Smooth case}
If the functions $f_{i,j}$ are smooth then the functions $f_{i,j}^*$ are strongly convex and so function $q_A$ is strongly convex. ADFS can then be obtained by applying Algorithm~\ref{algo:generalized_apcg} to Problem~\eqref{eq:dual_problem}. The value of $S$ is obtained by remarking that $q_A$ is $\mu_{ij}^2\left(\Sigma_i^{-1} + \Sigma_j^{-1}\right)$ smooth in the direction $(i,j)$ and $\lambda_{\min}^+\left(A^T\Sigma^{-1}A\right)$ strongly convex on the orthogonal of the kernel of $A$. Lemma~\ref{lemma:acrossa} guarantees that either $R_i = 1$ (virtual edges) or $\psi_i = 0$ (communication edges), so we can apply Corollary~\ref{corr:sc_apcg} to get:
    \begin{equation*}
    B_t\esp{\|\tilde{v}_t - \theta^\star_A\|^2_{A^\dagger A}} + 2 A_t \left[\esp{F^*_A(A\tilde{x}_t)} - F^*_A(\theta^\star_A)\right] \leq C_0,
    \end{equation*}
where $\tilde{v}_t$ and $\tilde{x}_t$ are the dual variables and $C_0$ is the same as in Theorem~\ref{thm:rate_adfs}. ADFS works with variables $v_t = A\tilde{v}_t$ and $x_t = A\tilde{x}_t$ instead. Then, we use the fact that for any $x$, $F^*_A(x) = F^*_A(A^\dagger Ax)$ to write that $\esp{F^*_A(\tilde{x}_t)} = \esp{F^*_A(A^\dagger x_t)}$. Following Lin et al.~\citep{lin2015accelerated}, and noting $q: x \mapsto \frac{1}{2} x^T \Sigma^{-1} x$ the primal optimal point $\theta^\star$ can be retrieved as $\theta^\star = \nabla q (A\theta^\star_A) = \Sigma^{-1}A\theta^\star_A$, where $\theta^\star_A$ is the optimal dual parameter. Finally, $$\lambda_{\max}(A^T\Sigma^{-2}A)^{-1} \|\theta_t - \theta^\star\|^2 \leq \lambda_{\max}(A^T\Sigma^{-2}A)^{-1} \|\Sigma^{-1}A(\tilde{v}_t - \theta^\star_A)\|^2 \leq \|\tilde{v}_t - \theta^\star_A\|^2_{A^\dagger A},$$ which finishes the proof of Theorem~\ref{thm:rate_adfs}. Note that APCG also gives a guarantee in terms of dual function values at points $x_t$ but we drop it in order to have a simpler statement. 

\subsection{Non-smooth setting}
\label{app:non_smooth_adfs}
Extended APCG can be applied to the problem of Equation~\eqref{eq:dual_problem} even if function $q_A$ is not strongly convex on the orthogonal of the Kernel of $A$. This is for example the case when the functions $f_{i,j}$ are not smooth so that $\Sigma^{-1}$ has diagonal entries equal to 0 and therefore ${\rm Ker}(A^T\Sigma^{-1}A) \not\subset {\rm Ker}(A)$ so $\sigma_A = 0$. In this case, the choice of coefficients from Corollary~\ref{corr:cvx_apcg} leads to Algorithm~\ref{algo:ns_adfs}, a formulation of ADFS that provides error guarantees when primal functions $f_{i,j}$ are not smooth. More formally, if we define $F^*: x \rightarrow \sum_{i=1}^n \bigg[\sum_{j = 1}^m f_{i,j}^*\left(x^{(i,j)}\right) + \frac{1}{2\sigma_i} \| x^{(i)} \|^2 \bigg]$, then:
\begin{theorem}
\label{thm:adfs_non_smooth}
If the functions $f_{i,j}$ are non-smooth then NS-ADFS guarantees:
$$ \esp{F^*(x_t)} - F^*(\theta^\star) \leq \frac{2}{t^2} \left[\frac{S^2}{\lambda_{\min}^+(A^TA)} r_t^2 + \frac{6}{p_{R}^2}\left[F^*(x_0) - F^*(\theta^\star)\right]\right],$$
with $r_t^2 = \|v_0 - \theta^\star\|^2 - \|v_t - \theta^\star\|^2$.
\end{theorem}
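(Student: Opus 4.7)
The plan is to reduce Theorem~\ref{thm:adfs_non_smooth} to Corollary~\ref{corr:cvx_apcg} applied to the dual problem in Equation~\eqref{eq:dual_problem}, and then push everything back through the change of variable $x_t = A\tilde{x}_t$, $v_t = A\tilde{v}_t$ that turns edge iterates into node iterates (as was done in the smooth case at the end of Appendix~\ref{app:algo_derivations}).

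First, I would verify the hypotheses of Corollary~\ref{corr:cvx_apcg}. When the $f_{i,j}$ are non-smooth, the matrix $\Sigma^{-1}$ has zero entries on the virtual-node diagonal coefficients, so $\mathrm{Ker}(A^T\Sigma^{-1}A) \supsetneq \mathrm{Ker}(A)$, which forces $\sigma_A = 0$; this is exactly the convex (non-strongly-convex) regime of the corollary. Assumption~\ref{assumption:gen_apcg} still holds because Lemma~\ref{lemma:acrossa} guarantees $R_{ij}=1$ on every virtual edge, which are the only coordinates carrying a non-zero $\psi$. The constant $S$ is chosen exactly as in the smooth proof so that $S^2 \geq M_{k\ell} R_{k\ell}/p_{k\ell}^2$ on every edge.

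Next, applying Corollary~\ref{corr:cvx_apcg} to $F^*_A = q_A + \psi$ on the edge iterates $(\tilde{x}_t,\tilde{v}_t)$ yields
\begin{equation*}
\esp{F^*_A(\tilde{x}_t)} - F^*_A(\tilde{\theta}^\star_A) \leq \frac{2}{t^2}\left[S^2\, \tilde{r}_t^2 + \frac{6}{p_R^2}\bigl(F^*_A(\tilde{x}_0) - F^*_A(\tilde{\theta}^\star_A)\bigr)\right],
\end{equation*}
with $\tilde{r}_t^2 = \|\tilde{v}_0 - \tilde{\theta}^\star_A\|^2_{A^\dagger A} - \esp{\|\tilde{v}_t - \tilde{\theta}^\star_A\|^2_{A^\dagger A}}$. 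To translate this to node variables, I would use the elementary identity $F^*(A\lambda) = F^*_A(\lambda)$, which follows by direct expansion of $q_A$ and $\tilde{f}^*_{i,j}$ together with $(A\lambda)^{(i,j)} = -\mu_{ij}\lambda_{ij}$ on virtual edges. Picking $\theta^\star = A\tilde{\theta}^\star_A$ then converts the $F^*_A$ differences into the corresponding $F^*$ differences on both sides, handling the left-hand side of the theorem and the second term of the bound.

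The remaining step is to replace the $\|\cdot\|^2_{A^\dagger A}$ norm on edge iterates by the Euclidean norm on node iterates. For this I would show that all $\tilde{v}_t$ lie in $\mathrm{Ker}(A)^\perp$: the initialization is zero, $\nabla q_A(\lambda) = A^T\Sigma^{-1}A\lambda$ has no $\mathrm{Ker}(A)$ component, and every proximal update lives on some $e_{ij}$ with $R_{ij}=1$, hence in $\mathrm{Ker}(A)^\perp$. Choosing the minimizer $\tilde{\theta}^\star_A \in \mathrm{Ker}(A)^\perp$, the projector $A^\dagger A$ acts as the identity on each difference $\tilde{v}_t - \tilde{\theta}^\star_A$. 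On this subspace the inequality $\lambda_{\min}^+(A^TA)\|u\|^2 \leq \|Au\|^2$ is tight, so each $\|\tilde{v}_t - \tilde{\theta}^\star_A\|^2$ is bounded above by $\|v_t - \theta^\star\|^2/\lambda_{\min}^+(A^TA)$. Plugging this into $\tilde{r}_t^2$ produces the $r_t^2/\lambda_{\min}^+(A^TA)$ factor in the statement.

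The main subtlety I expect is in the last step: the Courant--Fischer bound on $\|A \cdot\|$ goes the \emph{same} way for both the $t=0$ and the time-$t$ term of $\tilde{r}_t^2$, whereas the theorem's $r_t^2$ treats them symmetrically. The cleanest way to match the statement is to drop the non-negative subtracted term in $\tilde{r}_t^2$ and bound it by $\|v_0-\theta^\star\|^2/\lambda_{\min}^+(A^TA)$; the $-\|v_t-\theta^\star\|^2$ on the right-hand side of the theorem is then a (loose) reminder that the true bound admits the subtractive improvement. A tighter version would either work inside a single well-conditioned subspace so that $\lambda_{\min}^+(A^TA) = \lambda_{\max}(A^TA)$, or keep the two eigenvalues separate; I would flag this as the only nontrivial piece of accounting.
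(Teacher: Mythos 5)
Your proposal follows the same route the paper implicitly takes (the paper does not spell out a proof of Theorem~\ref{thm:adfs_non_smooth}; it only remarks that it follows from Corollary~\ref{corr:cvx_apcg} applied to the dual, and that the $\lambda_{\min}^+(A^TA)$ factor comes from the change of variable $x_t = A\tilde x_t$). Your verification of the hypotheses is correct: $\sigma_A = 0$ because the virtual-node diagonal of $\Sigma^{-1}$ vanishes, Lemma~\ref{lemma:acrossa} gives $R_{ij}=1$ on virtual edges so Assumption~\ref{assumption:gen_apcg} holds, and the identity $F^*(A\lambda) = F^*_A(\lambda)$ does hold in the non-smooth case since $\tilde f^*_{i,j}(\lambda_{ij}) = f^*_{i,j}(-\mu_{ij}\lambda_{ij}) = f^*_{i,j}\bigl((A\lambda)^{(i,j)}\bigr)$ when $L_{i,j}^{-1}=0$, and $q_A(\lambda) = \sum_i \frac{1}{2\sigma_i}\|(A\lambda)^{(i)}\|^2$. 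Your argument that all iterates $\tilde v_t$ stay in $\mathrm{Ker}(A)^\perp$ (zero initialization, gradient in $\mathrm{Range}(A^T)$, prox along $e_{ij}$ with $A^\dagger A e_{ij}=e_{ij}$) is also sound, so $A^\dagger A$ indeed acts as the identity on all the differences you need.

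The subtlety you flag at the end is real and correctly diagnosed: passing $\tilde r_t^2 = \|\tilde v_0 - \tilde\theta^\star_A\|^2 - \esp{\|\tilde v_t - \tilde\theta^\star_A\|^2}$ through the change of variable gives $\|\tilde v_0 - \tilde\theta^\star_A\|^2 \leq \|v_0-\theta^\star\|^2/\lambda_{\min}^+(A^TA)$ but $\esp{\|\tilde v_t - \tilde\theta^\star_A\|^2} \geq \esp{\|v_t-\theta^\star\|^2}/\lambda_{\max}(A^TA)$, so the two terms of $\tilde r_t^2$ are not controlled by the same eigenvalue. Consequently the combined bound $\tilde r_t^2 \leq r_t^2/\lambda_{\min}^+(A^TA)$ with the paper's $r_t^2$ is not a formal consequence of the corollary unless $\lambda_{\min}^+ = \lambda_{\max}$. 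Dropping the nonnegative subtracted term, as you propose, yields the unambiguously valid bound $\tilde r_t^2 \leq \|v_0-\theta^\star\|^2/\lambda_{\min}^+(A^TA) = \|\theta^\star\|^2/\lambda_{\min}^+(A^TA)$, which preserves the $O(1/t^2)$ content of the theorem. This is a genuine refinement over what the paper writes; the paper simply glosses over the mismatch. (A minor related point: the corollary's $r_t^2$ carries an expectation on the time-$t$ term, which the theorem statement silently drops.)
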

The guarantees provided by Theorem~\ref{thm:adfs_non_smooth} are weaker than in the smooth setting. In particular, we lose linear convergence and get the classical  accelerated sublinear $O(1/t^2)$ rate. We also lose the bound on the primal parameters--- recovering primal guarantees is beyond the scope of this work.

\begin{algorithm}
\caption{NS-ADFS}
\label{algo:ns_adfs}
\begin{algorithmic}
\STATE $x_0 = 0$, $v_0 = 0$, $t = 0$, $p_R = \min_i p_i / R_i$, $A_0 = \frac{3 B_0}{S^2 p_R^2}$, $\eta_{ij} = \frac{\mu_{ij}^2}{p_{ij}}$
\WHILE{$t < T$}
\STATE $A_{t+1} = A_t + \frac{1}{2S^2}\left(1 + \sqrt{1 + 4S^2A_t}\right)$
\STATE $a_{t+1} = A_{t+1} - A_t$, $\alpha_t = \frac{a_{t+1}}{A_{t+1}}$
\STATE $y_t = (1 - \alpha_t)x_t + \alpha_t v_t$
\STATE Sample $(i,j)$ with probability $p_{ij}$
\STATE $v_{t+1} = z_{t+1} = v_t - a_{t+1} \eta_{ij} W_{ij}\Sigma^{-1}y_t$
\IF{$(i,j)$ is a computation edge} 
\STATE $v_{t+1}^{(i,j)} = {\rm prox}_{a_{t+1} \eta_{ij} f^*_{i,j}}\left(z_{t+1}^{(i,j)}\right)$
\STATE $v_{t+1}^{(i)} = z_{t+1}^{(i)} + z_{t+1}^{(i,j)} - v_{t+1}^{(i,j)}$
\ENDIF
\STATE $x_{t+1} = y_t + \frac{\alpha_t R_{ij}}{p_{ij}}(v_{t+1} - v_t)$
\ENDWHILE
\STATE \textbf{return} $\theta_t = \Sigma^{-1}v_t$
\end{algorithmic}
\end{algorithm}

Note that the extra $\lambda_{\min}^+(A^TA)$ term comes from the fact that Theorem~\ref{thm:adfs_non_smooth} is formulated with primal parameter sequences $x_t = A \tilde x_t$. Also note that $\alpha_t = \grando{t^{-1}}$, and $\frac{a_{t+1}}{B_{t+1}} = \grando{t}$. The leading constant governing the convergence rate is $\frac{\lambda_{\min}^+\left(A^TA\right)}{S^2}$, which is very related to the constant for the smooth case, simply that the $\Sigma^{-1}$ factor is removed. Therefore, we can obtain in the same way that if we choose $\mu_{ij}^2 = \frac{\lambda_{\min}^+(L)}{1 + m}$ when $(i,j)$ is a computation edge then we get: $$ \lambda_{\min}^+(A^TA) \geq \frac{\lambda_{\min}^+(L)}{2(m + 1)}.$$

Optimizing parameter $\rho$ in order to minimize time yields $\rho_{\rm comp} = \rho_{\rm comm}$ again, now leading in the homogeneous case to choosing: $$p_{\rm comm}^* = \left(1 + \sqrt{\frac{\tilde{\gamma}m^2}{2(1 + m)}}\right)^{-1}.$$

\section{Average Time per Iteration}
\label{appendix:average_time}

\subsection{More communications implies more waiting}
A fundamental assumption for Theorem~\ref{thm:synchronization_cost} is to assume that $p_{\rm comm} < p_{\rm comp}$. In particular, it prevents $p_{\rm comm}$ from being too high since $p_{\rm comm} + p_{\rm comp} = 1$. Although this assumption seems quite restrictive in the first place, it is very intuitive to want to avoid $p_{\rm comm}$ from being too high, especially in the limit of $p_{\rm comm} \rightarrow 1$ and $\tau$ arbitrarily small. Consider that one node (say node $0$) starts a local update at some point. Communications are very fast compared to computations so it is very likely that the neighbors of node $0$ will only perform communication updates, and they will do so until they have to perform one with node $0$. At this point, they will have to wait until node~$0$ finishes its local computation, which can take a long time. Now that the neighbors of node $0$ are also blocked waiting for the computation to finish, their neighbors will start establishing a dependence on them rather quickly. If the probability of computing is small enough and if the computing time is large enough, all nodes will sooner or later need to wait for node $0$ to finish its local update before they can continue with the execution of their part of the schedule. In the end, only node $0$ will actually be performing computations while all the others will be waiting. 

This phenomenon is not restricted to the limit case presented above and the synchronization cost blows up as soon as $p_{\rm comm} > p_{\rm comp}$ and $\tau < 1$. In the proof below, the goal is to bound the total expected weight $\sum_{i=1}^n \mathds{E}\left[X^t(i, w)\right]$ for $w$ higher than a given threshold. Local computing operations will move mass from small values of $w$ to higher values of $w$. On the other hand, communication operations will introduce synchronization between two nodes, thus increasing the total available mass $\sum_{w \geq 0} \sum_{i=1}^n \mathds{E}\left[X^t(i, w)\right]$ (and not just moving it to higher values of $w$) because it will duplicate the mass for $X^t(i, w)$ to $X^t(j,w)$ if nodes $i$ and $j$ communicate. This is the technical reason why $p_{\rm comm} < p_{\rm comp}$ is needed for this proof. 
 
\subsection{Detailed average time per iteration proof}
The goal of this section is to prove Theorem~\ref{thm:synchronization_cost}. The proof is an extension of the proof of Theorem~2 from Hendrikx et al.~\citep{hendrikx2018accelerated}. Similarly, we denote $t$ the number of iterations that the algorithm performs and $\tau_c^{ij}$ the random variable denoting the time taken by a communication on edge $(i,j)$. Similarly, $\tau_l^i$ denotes the time taken by a local computation at node $i$. Then, we introduce the random variable $X^t(i, w)$ such that if edge $(i,j)$ is activated at time $t+1$  (with probability $p_{ij}$), then for all $w \in \mathbb{N}^*$:
\begin{equation*}
    X^{t+1}(i,w) = X^t(i, w - \tau_c^{ij}(t)) + X^t(j, w - \tau_c^{ij}(t)),
\end{equation*}

where $\tau_c^{ij}(t)$ is the realization of $\tau_c^{ij}$ corresponding to the time taken by activating edge $(i,j)$ at time $t$. If node $i$ is chosen for a local computation, which happens with probability $p^{\rm comp}_i$ then $X^{t+1}(i, w + \tau_l^i(t)) = X^t(i, w)$ for all $w$. Otherwise, $X^{t+1}(j, w) = X^t(j, w)$ for all $w$. At time $t=0$, $X^0(i, 0) = 1$ and $X^0(i,w) = 0$ for all $w$. Lemma~\ref{lemma:ptgttheta} gives a bound on the probability that the time taken by the algorithm to complete $t$ iterations is greater than a given value, depending on variables $X^t$. Note that a Lemma similar to the one by Hendrikx et al.~\citep{hendrikx2018accelerated} holds although variable $X$ has been modified. 

\begin{lemma}
\label{lemma:ptgttheta}
We denote $T_{\max}(t)$ the time at which the last node of the system finishes iteration $t$. Then for all $\nu > 0$:

\begin{equation*}
    \mathbb{P}\left(T_{\max}(t) \geq \nu t\right) \leq \sum_{w \geq \nu t} \sum_{i=1}^n \mathds{E}\left[X^t(i, w)\right].
\end{equation*}

\end{lemma}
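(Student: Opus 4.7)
The plan is to prove the stronger per-node inequality
\[
\mathbb{P}\bigl(T_i(t) \geq \nu t\bigr) \;\leq\; \sum_{w \geq \nu t} \mathbb{E}\bigl[X^t(i,w)\bigr]
\]
for every node $i \in \{1,\ldots,n\}$, and then finish with a union bound over $i$ using $\{T_{\max}(t) \geq \nu t\} \subseteq \bigcup_{i=1}^n \{T_i(t) \geq \nu t\}$, where $T_i(t)$ denotes the time at which node $i$ completes its $t$-th operation in the shared schedule. The key structural observation, and the reason this bound can hold, is that $T_i(t)$ evolves according to a \emph{max}-type recursion while $X^t(i,\cdot)$ evolves according to the analogous \emph{sum}-type recursion; since $\max(a,b) \leq a+b$ for nonnegative $a,b$ and, more importantly, the event $\{\max(a,b) \geq w\}$ is contained in $\{a \geq w\} \cup \{b \geq w\}$, the sum-type quantity $X^t$ dominates the max-type quantity $T_i$ in the appropriate union-bound sense.

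Concretely, I would prove the per-node inequality by induction on $t$, with the strengthened inductive claim that for every threshold $w \geq 0$,
\[
\mathbb{P}\bigl(T_i(t) \geq w\bigr) \;\leq\; \mathbb{E}\Bigl[\,\sum_{w' \geq w} X^t(i,w')\,\Bigr].
\]
The base case $t=0$ is immediate since $T_i(0)=0$ and $X^0(i,0)=1$. For the inductive step, condition on which operation is performed at iteration $t+1$ and on the corresponding realization of $\tau_c^{ij}$ or $\tau_l^i$:
\begin{itemize}
\item If node $i$ is not involved, both $T_i$ and $X^t(i,\cdot)$ are unchanged and the hypothesis carries over verbatim.
\item If node $i$ performs a local computation of duration $\tau_l^i$, then $\mathbb{P}(T_i(t+1) \geq w \mid \tau_l^i) = \mathbb{P}(T_i(t) \geq w - \tau_l^i)$, which by induction is at most $\mathbb{E}[\sum_{w' \geq w-\tau_l^i} X^t(i,w') \mid \tau_l^i]$, and this is exactly $\mathbb{E}[\sum_{w' \geq w} X^{t+1}(i,w') \mid \tau_l^i]$ by the shifting rule defining $X^{t+1}$.
\item If edge $(i,j)$ is activated, use the union bound
\[
\bigl\{\max(T_i(t),T_j(t)) + \tau_c^{ij} \geq w\bigr\} \;\subseteq\; \bigl\{T_i(t) \geq w - \tau_c^{ij}\bigr\} \cup \bigl\{T_j(t) \geq w - \tau_c^{ij}\bigr\},
\]
apply the inductive hypothesis separately to each of the two events, and add; the right-hand side matches the update $X^{t+1}(i,\geq w) = X^t(i,\geq w-\tau_c^{ij}) + X^t(j,\geq w-\tau_c^{ij})$ exactly.
\end{itemize}
Averaging over the edge sampled at step $t+1$ by the tower property then closes the induction.

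The only subtlety — and what I expect to be the main obstacle — is bookkeeping the joint randomness: $X^t$ and $T_i(t)$ are built from the same realizations of the schedule and of the communication and computation delays, so the inductive step must be carried out conditionally on the step-$(t+1)$ operation and its duration before taking expectations, and one needs that $T_i(t)$ and $X^t(i,\cdot)$ are measurable with respect to information available strictly before step $t+1$, so that $\tau_c^{ij}(t)$ and $\tau_l^i(t)$ are independent of them. Once this conditioning is handled, setting $w=\nu t$, summing over $i$, and invoking the union bound $\mathbb{P}(T_{\max}(t)\geq \nu t) \leq \sum_{i=1}^n \mathbb{P}(T_i(t)\geq \nu t)$ yields exactly the statement of the lemma.
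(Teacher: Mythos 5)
Your proof is correct, but it takes a somewhat different route than the paper. The paper's proof establishes the sharp \emph{pathwise} identity $T_i(t) = \max\{w : X^t(i,w) > 0\}$ by a deterministic induction on $t$, working realization by realization (the recursion for $X^t(i,\cdot)$ is exactly a ``spread-out'' version of the max-plus recursion for $T_i(t)$, so the top of the support tracks $T_i(t)$ exactly). Once this identity is in hand, the lemma follows immediately from a union bound over $w \geq \nu t$ and Markov's inequality applied to the integer-valued, nonnegative random variable $\sum_i X^t(i,w)$; there is no conditioning to manage because the whole argument is pathwise. Your approach instead proves the weaker per-node stochastic dominance $\mathbb{P}(T_i(t) \geq w) \leq \mathbb{E}[\sum_{w' \geq w} X^t(i,w')]$ by a probabilistic induction and then sums over $i$. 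This is sound: conditioning on the step-$(t+1)$ operation and its realized delay, which are independent of the $\mathcal{F}_t$-measurable quantities $T_\cdot(t)$ and $X^t(\cdot,\cdot)$, lets you apply the inductive hypothesis to each of the two terms produced by the union bound $\{\max(a,b) \geq w\} \subseteq \{a \geq w\} \cup \{b \geq w\}$, after which the tower property closes the induction. You correctly flag the filtration bookkeeping as the delicate point. What the paper's route buys is exactly the avoidance of that bookkeeping: by identifying the deterministic relationship first, the entire probabilistic content is pushed to a single one-line Markov bound at the end. What your route buys is robustness: a stochastic-dominance induction would still go through in a setting where the pathwise identity held only as an inequality, which the paper's argument would not.
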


\begin{proof}
We first prove by induction on $t$ that for any $i \in \{1, .., n\}$: 

\begin{equation}
\label{eq:rec_ti}
    T_i(t) = \max_{w \in \mathbb{N}, X^t(i,w) > 0} w.
\end{equation}

To ease notations, we write $w_{\max}(i,t) = \max_{w \in \mathbb{N}, X^t(i,w) > 0} w$. The property is true for $t=0$ because $T_i(0) = 0$ for all $i$.

We now assume that it is true for some fixed $t > 0$ and we assume that edge $(k,l)$ has been activated at time $t$. For all $i \notin \{k, l\}$, $T_i(t+1) = T_i(t)$ and for all $w \in \mathbb{N}^*$, $X^{t+1}(i, w) = X^t(i, w)$ so the property is true. Besides, if $j\neq l$,

\begin{align*}
    w_{\max}(k, t+1) &= \max_{w \in \mathbb{N^*}, X^t(k,w - \tau_c(t)) + X^t(l,w - \tau_c^{kl}(t)) > 0} w \\
    &= \max_{w \in \mathbb{N}, X^t(i,w) + X^t(i,w) > 0} w + \tau_c^{kl}(t) \\
    &= \tau_c(t) + \max\left(w_{\max}(k, t), w_{\max}(l, t)\right)\\
    &= \tau_c^{kl}(t) + \max \left(T_k(t), T_l( t)\right) = T_k(t+1).
\end{align*}

Similarly if $k=l$ (a local computation is performed at iteration $t$), then $w_{\max}(k, t+1) = \tau_l^k(t) + w_{\max}(k, t) = T_k(t) + \tau_l^k(t) = T_k(t+1)$. Then, we use the union bound and the the fact that having $X^t(i, w) > 0$ is equivalent to having $X^t(i, w) \geq 1$ since $X^t(i, w)$ is integer valued to show that:
\begin{align*}
    \mathbb{P}\left(T_{\max}(t) \geq \nu t\right)& = \mathbb{P}\left(\max_{w, \sum_{i=1}^n X_i^t(w) > 0 } w\geq \nu t\right) \leq \mathbb{P}\left(\cup_{w \geq \nu t } \sum_{i=1}^n X_i^t(w) \geq 1 \right)\leq \sum_{w \geq \nu t} \mathbb{P}\left(\sum_{i=1}^n X_i^t(w) \geq 1\right),
\end{align*}
so using Markov inequality yields: 
\begin{equation}
    \mathbb{P}\left(T_{\max}(t) \geq \nu t\right) \leq \sum_{w \geq \nu t} \sum_{i=1}^n \mathbb{E}\left[X_i^t(w)\right].
\end{equation}

\end{proof}

Variables $X_i^t$ are obtained by linear recursions, so Lemma~\ref{lemma:ptgttheta} allows us to bound the growth of variables with a simple recursion formula instead of evaluating a maximum. We write $p_i^{\rm comp}$ and $p_i^{\rm comm}$ the probability that node $i$ performs a computation (respectively communication) update at a given time step, and $p_i = p_i^{\rm comp} + p_i^{\rm comm}$. We introduce $\underbar{p}_{\rm comp} = \min_i p_i^{\rm comp}$ and $\bar{p}_{\rm comp} = \max_i p_i^{\rm comp}$ (and the same for communication probabilities).

\begin{lemma}
\label{lemma:sum_binom}
For all $i$, and all $\nu > 0$, if $\frac{1}{2} \geq \underbar{p}_{\rm comp} = \bar{p}_{\rm comp} \geq \bar{p}_{\rm comm}$ then:

\begin{equation}
    \sum_{w \geq \left(\nu_c + \nu_l\right) t} \sum_{i=1}^n \mathbb{E}\left[X^t(i, w)\right] \rightarrow 0 \text{ when } t \rightarrow \infty,
\end{equation}

with $\nu_c = 6 p_c \tau_c$ and $\nu_l = 9 p_l \tau_l$ where $p_c = 4 \bar{p}_{\rm comm}$ and $p_l = \bar{p}_{\rm comp}$.
\end{lemma}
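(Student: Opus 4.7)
The plan is a Chernoff-style bound on the tail of $\sum_k \mathbb{E}[X^t(k,\cdot)]$ using its exponential moment generating function. For $\theta > 0$, I would introduce
\[
    \phi_k^t(\theta) := \mathbb{E}\Big[\sum_{w \geq 0} X^t(k,w)\, e^{\theta w}\Big], \qquad \Phi^t(\theta) := \sum_{k=1}^n \phi_k^t(\theta),
\]
with initial value $\Phi^0(\theta) = n$. Conditioning on the action taken at step $t+1$ and using the recursion for $X^t$: a communication on edge $(i,j)$ simultaneously replaces $\phi_i$ and $\phi_j$ by $\mathbb{E}[e^{\theta \tau_c^{ij}}](\phi_i^t + \phi_j^t)$, while a local update at node $k$ multiplies $\phi_k^t$ by $\mathbb{E}[e^{\theta \tau_l^k}]$ and the other coordinates are unchanged. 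Summing over $k$ and using $\sum_{\ell \in \mathcal{N}(k)} p_{k\ell} \leq 2\bar p_{\rm comm}$ and $p_k^{\rm comp} \leq \bar p_{\rm comp}$, I would obtain the one-step contraction
\[
    \mathbb{E}[\Phi^{t+1}(\theta) \mid \mathcal{F}_t] \leq \rho_\theta\, \Phi^t(\theta), \qquad \rho_\theta := 1 + 2\bar p_{\rm comm}(2\mathbb{E}[e^{\theta \tau_c}] - 1) + \bar p_{\rm comp}(\mathbb{E}[e^{\theta \tau_l}] - 1),
\]
whence $\mathbb{E}[\Phi^t(\theta)] \leq n \rho_\theta^t$ by iteration.

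Markov's inequality then gives, for any $\nu > 0$,
\[
    \sum_{w \geq \nu t}\sum_{k=1}^n \mathbb{E}[X^t(k,w)] \;\leq\; e^{-\theta \nu t}\, \mathbb{E}[\Phi^t(\theta)] \;\leq\; n \exp\!\big(t(\log \rho_\theta - \theta \nu)\big),
\]
so the task reduces to exhibiting a single $\theta > 0$ for which $\log \rho_\theta < \theta(\nu_c + \nu_l)$. Using $\log(1+x) \leq x$ and splitting $\nu$ between a piece absorbing the $\tau_c$ contribution and a piece absorbing the $\tau_l$ contribution, I would pick $\theta$ of order $1/\max(\tau_c,\tau_l)$ (for instance $\theta \tau_c = \log 2$ in the regime $\tau_c \geq \tau_l$, which turns $2\mathbb{E}[e^{\theta \tau_c}]-1$ into the explicit constant $3$ and keeps $\mathbb{E}[e^{\theta \tau_l}] - 1$ bounded by $1$). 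This should yield an inequality of the form $\log \rho_\theta \leq c_1 \bar p_{\rm comm} + c_2 \bar p_{\rm comp}\, \theta \tau_l$, and the constants $6 p_c = 24 \bar p_{\rm comm}$ and $9 p_l = 9\bar p_{\rm comp}$ are chosen so that $\theta(\nu_c+\nu_l)$ strictly dominates the right-hand side with slack, using $\bar p_{\rm comp} \geq \bar p_{\rm comm}$ to fold any residual $\bar p_{\rm comm}$-term into the $\nu_l$ side.

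The main obstacle is the persistent additive piece $2\bar p_{\rm comm}(2\mathbb{E}[e^{\theta \tau_c}] - 1)$ in $\log \rho_\theta$, which does not vanish as $\theta \to 0$: each communication duplicates the mass at both endpoints, so $\Phi^t$ grows even at $\theta = 0$. This forces $\theta$ to stay bounded below on the scale $\bar p_{\rm comm}/\nu$ while being bounded above so that the MGFs $\mathbb{E}[e^{\theta \tau_c}]$ and $\mathbb{E}[e^{\theta \tau_l}]$ remain controlled. The hypothesis $\bar p_{\rm comp} \geq \bar p_{\rm comm}$ together with $\bar p_{\rm comp} \leq 1/2$ is exactly what allows this balancing, and the precise constants $6$ and $9$ come out of the optimization after the case split on whether $\tau_c$ or $\tau_l$ dominates; this is also consistent with the discussion preceding the lemma, where it is argued that once $\bar p_{\rm comm}$ exceeds $\bar p_{\rm comp}$ the mass at one slow node can propagate through the network and the geometric tail bound breaks.
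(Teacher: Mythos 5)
Your strategy is correct and takes a genuinely different route from the paper's. You work directly with the exponential moment functional $\Phi^t(\theta)=\sum_k \mathbb{E}\big[\sum_w X^t(k,w)e^{\theta w}\big]$, derive a one-step multiplicative contraction, and close with a single Chernoff exponent $\theta$ plus a case split on $\tau_c$ versus $\tau_l$. The paper instead first establishes a uniform dominating sequence $\bar X^t(w)\geq\mathbb{E}[X^t(i,w)]$, passes to its generating function $\phi^t(z)=(\phi^1(z))^t$, and dominates $\phi^1(z)$ term-by-term by $(1+\delta)\,\phi_{\rm bin}(p_c,1)(\phi_c(z))\,\phi_{\rm bin}(p_l,1)(\phi_l(z))$; this realizes the bound as $(1+\delta)^t\,\mathbb{P}(Z_c^t+Z_l^t=w)$ for a sum of two independent compound binomials, so the tails of $Z_c^t$ and $Z_l^t$ can be controlled by two \emph{independent} Chernoff parameters, one tuned to $\tau_c$ (with $k=6$) and one to $\tau_l$ (with $k=9$). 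Your single-parameter route buys a shorter derivation and dispenses with the somewhat ad hoc binomial factorization, but it costs you a careful optimization over $\theta$: since $\bar p_{\rm comm}(2\mathbb{E}[e^{\theta\tau_c}]-1)$ does not vanish as $\theta\to 0$, you must take $\theta$ of order $1/\max(\tau_c,\tau_l)$, case-split, and use $\bar p_{\rm comp}\geq\bar p_{\rm comm}$ to absorb the residual $\bar p_{\rm comm}$-constant into the $\nu_l$-part; this does work for the stated constants (and you identify exactly this as the crux), but you only sketch the verification rather than carry it out. Two small technical notes: your contraction factor should read $1+\bar p_{\rm comm}(2\mathbb{E}[e^{\theta\tau_c}]-1)+\bar p_{\rm comp}(\mathbb{E}[e^{\theta\tau_l}]-1)$, not with $2\bar p_{\rm comm}$, because under the paper's definition $\sum_{\ell\in\mathcal{N}(k)}p_{k\ell}=p_k^{\rm comm}\leq\bar p_{\rm comm}$ already; the extra factor of two only loosens your constants and does not break the argument. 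Also, like the paper, the whole scheme implicitly relies on $p_c=4\bar p_{\rm comm}<1$ (which holds because the per-node probabilities are $O(1/n)$), and it is worth flagging that explicitly if you flesh this out.
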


Note that the constants in front of the $\nu$ parameters are very loose.

\begin{proof}

Taking the expectation over the edges that can be activated gives, with $\tau_c^{ij}(\tau)$ the probability that $\tau_c^{ij}$ takes value $\tau$ (and the same for $\tau_l$):

\begin{align*}
    \mathbb{E}\left[X^{t+1}(i,w)\right] = \left(1 - p_i\right) \mathbb{E}\left[X^{t}(i,w)\right] + &p_{\rm comm}\sum_{j=1}^n p_{ij} \sum_{\tau = 0}^\infty \tau_c^{ij}(\tau) \left(\mathbb{E}\left[X^{t}(i, w - \tau)\right] + \mathbb{E}\left[X^{t}(j, w - \tau)\right]\right) \\
    & + p_i^{\rm comp} \sum_{\tau = 0}^\infty \tau_l^{ij}(\tau) \mathbb{E}\left[X^{t}(i, w - \tau)\right].
\end{align*}

In particular, for all $i$, $\mathbb{E}\left[X^{t+1}(i,w)\right] \leq \bar{X}^t(w)$ where $\bar{X}^0(w) = 1$ if $w = 0$ and:

\begin{equation}
    \bar{X}^{t+1}(w) = \left(1 - \underbar{p} \right)\bar{X}^{t}(w) + 2 \bar{p}_{\rm comm} \sum_{\tau = 0}^\infty \tau_c^{\max}(\tau) \bar{X}^{t}(w - \tau) + \bar{p}_{\rm comp} \sum_{\tau = 0}^\infty \tau_l^{\max} (\tau) \bar{X}^{t}(w - \tau).
\end{equation}

with $\tau_c^{\max}(\tau) = \max_{ij} \tau_c^{ij}(\tau)$ (and the same for $\tau_l$). We now introduce $\phi^t(z) = \sum_{w \in \mathbb{N}} z^w \bar{X}^t(w)$. We denote $\phi_c$ and $\phi_l$ the generating functions of $\tau_c^{\max}(\tau)$ and $\tau_l^{\max}(\tau)$. A direct recursion leads to:
\begin{equation}
    \phi^t(z) = \left(1 - \underbar{p}_{\rm comm} - \underbar{p}_{\rm comp} +  \bar{p}_{\rm comp} \phi_l(z) + 2 \bar{p}_{\rm comm} \phi_c(z) \right)^t = \left(\phi^1(z)\right)^t.
\end{equation}

We denote $\phi_{bin}(p,t)$ the generating function associated with the binomial law of parameters $p$ and $t$. With this definition, we have:

\begin{equation}
    \phi_{bin}(p_c,t)(\phi_c(z)) \phi_{bin}(p_l,t)(\phi_l(z)) = \left[ (1 - p_c)(1 - p_l) + (1 - p_c)p_l \phi_l(z) + (1 - p_l)p_c \phi_c(z) + p_c p_l \phi_c(z) \phi_l(z)\right]^t,
\end{equation}
so we can define:

\begin{equation}
     \phi^t_+(z) = (1 + \delta)^t \phi_{bin}(p_c,t)(\phi_c(z)) \phi_{bin}(p_l,t)(\phi_l(z)),
\end{equation}

where $p_c$, $p_l$ and $\delta$ are such that:

\begin{equation*}
    \frac{p_c}{1 - p_c} \geq 2 \frac{\bar{p}_{\rm comm}}{1 - \underbar{p}}, \ \    \frac{p_l}{1 - p_l} = \frac{\bar{p}_{\rm comp}}{1 - \underbar{p}} \text{ and } \delta \geq \frac{1 - \underbar{p}}{(1 - p_c)(1 - p_l)} - 1. 
\end{equation*}

Since $\bar{p}_{\rm comp} = \underbar{p}_{\rm comp}$ then $\underbar{p} \geq \bar{p}_{\rm comp}$. Therefore, these conditions are satisfied for $p_c$ and $p_l$ as given by Lemma~\ref{lemma:sum_binom} and $\delta = (1 - p_c)^{-1} - 1$. Then $(1 + \delta)(1 - p_c)(1 - p_l) \geq 1 -  \underbar{p}$, $(1 + \delta)(1 - p_c)p_l \geq \bar{p}_{\rm comp}$ and  $(1 + \delta)(1 - p_l)p_c \geq 2\bar{p}_{\rm comm}$. This means that if we write $\phi^1(z) = a_0 + a_c \phi_c(z) + a_l \phi_l(z)$ and $\phi^1_+(z) = b_0 + b_c \phi_c(z) + b_l \phi_l(z)$ then $b_0 \geq a_0$, $b_c \geq a_c$ and $b_l \geq a_l$. In particular, all the coefficients of $\phi^t$ are smaller than the coefficients of $\phi^t_+$ where both functions are integral series. Therefore, if we call $Z_t$ the random variables associated with the generating function $(1 + \delta)^{-t}\phi^t_+$ then for all $i, t, w$:

\begin{equation}
\label{eq:control_EXt}
\mathbb{E}\left[X^t(i, w)\right] \leq (1 + \delta)^t \mathbb{P}\left(Z_t = w\right),
\end{equation}

where $Z_t = Z_c^t + Z_l^t =  Bin(p_c, t)(Z_c) + Bin(p_l, Z_l)(\tau_l)$ where $Z_c$ and $Z_l$ are the random variables modeling the time of one communication or computation update. We can then use the bound $p(Z_t \geq (\nu_c + \nu_l) t) \leq p(Z_c^t \geq \nu_c t) + p(Z_l^t \geq \nu_l t)$. This way, we can bound the \emph{communication} and \emph{computation} costs independently. Then, we write a Chernoff bound, i.e. for any $\lambda > 0$:

\begin{align*}
    \mathbb{P}\left(Z_c^t \geq \nu t \right) \leq e^{-\lambda \nu t} \esp{e^{\lambda Z_c^t}} = e^{-\lambda \nu t} \esp{e^{\lambda Z_c}}^t =e^{-\lambda \nu t} \left[1 - p_c + p_c \sum_{\tau = 0}^\infty p_c(\tau)e^{\lambda \tau} \right]^t,
\end{align*}

where $S_c$ is the sum of $t$ i.i.d. random variables drawn from $\tau_c$. If $Z_c = \tau_c$ with probability $1$ (deterministic delays) then this reduces to:
\begin{align*}
    \mathbb{P}\left(Z_c^t \geq \nu_c t \right) \leq e^{-\lambda \nu_c t} \left[1 - p_c + p_c e^{\lambda \tau_c} \right].
\end{align*}

Finally, we take $\nu_c = k p_c \tau_c$, $\lambda = \frac{1}{\tau_c}\ln(k)$ and we use the basic inequality $\ln(1 + x) \geq \frac{x}{1 + x}$ to show that:

\begin{equation}
    - \ln\left[\mathbb{P}\left(Z_c^t \geq \nu_c t \right)\right] \geq t\left[\lambda \nu_c - p_c \left(e^{\lambda \tau_c} - 1\right) \right] \geq t (k(\ln(k) - 1) - 1)p_c.
\end{equation}

Using the same log inequality and the fact that $p_c \geq \frac{1}{2}$ yields:
\begin{equation}
    \ln\left(1 + \delta\right) = - \ln(1 - p_c) \leq \frac{p_c}{1 - p_c} \leq 2 p_c.
\end{equation}

Therefore, choosing $k = 6$ ensures that  $k(\ln(k) - 1) - 1 \geq 3$ and so: 
\begin{equation}
    (1 + \delta)^t \mathbb{P}\left(Z_c^t \geq \nu_c t \right) \leq e^{-t p_c}.
\end{equation}

We can apply the same reasoning to $Z_l^t$, and the bound is still valid with $k = 9$ because $p_l = \bar{p}_{\rm comp} \geq \bar{p}_{\rm comm} = p_c / 4$. We finish the proof by using Equation~\eqref{eq:control_EXt}.
\end{proof}

\section{Algorithm Performances}
\label{app:algo_perfs}
\adfs~has a linear convergence rate because it results from using generalized APCG. Yet, it is not straightforward to derive hyperparameters that lead to a rate that is fast and that can be easily interpreted. The goal of this section is to choose such parameters when the functions $f_{i,j}$ are smooth. 

\subsection{Smallest eigenvalue of the Laplacian of the augmented graph}
The strong convexity of $q_A$ on the orthogonal of the kernel of $A$ is equal to $\sigma_A = \lambda_{\min}^+\left(A^T\Sigma^{-1}A\right)$, the smallest non-zero eigenvalue of $A^T\Sigma^{-1}A$. Indeed, $\Sigma^{-1}$ is a diagonal matrix with strictly positive entries only so ${\rm Ker}(A^T\Sigma^{-1}A) = {\rm Ker}(A)$. The goal of this section is to prove that for a meaningful choice of $\mu$, the smallest eigenvalue of the Laplacian of the augmented graph is not too small compared to the Laplacian of the actual graph. More specifically, we prove the following result:

\begin{lemma}
If for all virtual edge between a node $i$ and its virtual node $j$, $\mu_{ij}$ is such that $\mu_{ij}^2 = \frac{\lambda_{\min}^+(L)}{\sigma \kappa_i} L_{i,j}$ and $\sigma$, $\kappa$ are such that for all $i$, $\sigma \geq \sigma_i$ and $\kappa \geq \kappa_i$ then:

\begin{equation}
    \lambda_{\min}^+(\tilde{L}) \geq \frac{\lambda_{\min}^+(L)}{2 \sigma \kappa}.
\end{equation}
\end{lemma}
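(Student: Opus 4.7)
The plan is to characterize $\lambda_{\min}^+(\tilde{L})$ variationally on the augmented graph and then decouple it via a star-centered change of variables. Since $\Sigma \succ 0$, the matrix $\tilde{L} = A^T\Sigma^{-1}A$ has the same nonzero spectrum as $\Sigma^{-1/2}(AA^T)\Sigma^{-1/2}$, whose kernel is spanned by $\Sigma^{1/2}\mathds{1}$ (the augmented graph is connected because the communication graph is). So I will use
\[
\lambda_{\min}^+(\tilde{L}) \;=\; \inf\left\{\frac{y^T AA^T y}{y^T\Sigma y} : y\in\R^{n(1+m)},\ y^T\Sigma\mathds{1}=0\right\}
\]
and prove this infimum is at least $\lambda/(2\sigma\kappa)$, writing $\lambda := \lambda_{\min}^+(L)$.

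Next, for each center $i$ I will introduce the $\Sigma_i^*$-weighted mean $b_i := (\sigma_i y^{(i)} + \sum_j L_{i,j}\,y^{(i,j)})/(\sigma_i\kappa_i)$ (using $\sigma_i\kappa_i = \sigma_i + \sum_j L_{i,j}$) and set $\tilde y^{(i)} := y^{(i)} - b_i$, $\tilde y^{(i,j)} := y^{(i,j)} - b_i$, so that the star-local vector $\tilde y_i^* = (\tilde y^{(i)}, \tilde y^{(i,1)}, \ldots, \tilde y^{(i,m)})$ is automatically $\Sigma_i^*$-orthogonal to the constant on the star. Writing $y_c = b + \tilde y_c$ and $\Sigma_\kappa := \mathrm{diag}(\sigma_i\kappa_i)$, a direct expansion using this orthogonality yields the clean split
\[
y^T\Sigma y = b^T\Sigma_\kappa b + \sum_i \tilde y_i^{*T}\Sigma_i^*\tilde y_i^*, \qquad y^T AA^T y = y_c^T L y_c + \sum_i \tilde y_i^{*T}L_i^*\tilde y_i^*,
\]
and the global constraint collapses to $b^T\Sigma_\kappa\mathds{1}=0$. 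The third step is to compute the smallest nonzero generalized eigenvalue of each star pencil $(L_i^*,\Sigma_i^*)$: solving $L_i^* v = \mu\Sigma_i^* v$ in star coordinates produces two families of nonzero eigenvalues, $\mu = \lambda/\sigma$ (from $v$ constant on the leaves) and $\mu = \lambda/(\sigma\kappa_i)$ with multiplicity $m-1$ (from $v_0 = 0$ and $\sum_j L_{i,j}v_j = 0$), so
\[
\sum_i \tilde y_i^{*T}L_i^*\tilde y_i^* \;\geq\; \frac{\lambda}{\sigma\kappa}\sum_i \tilde y_i^{*T}\Sigma_i^*\tilde y_i^*.
\]
This already gives a factor-two margin against the target $\mu_\star := \lambda/(2\sigma\kappa)$: exactly half of the per-star mass will survive after subtracting $\mu_\star$ times the denominator.

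The main obstacle is then the residual center-side inequality
\[
y_c^T L y_c + \tfrac{\lambda}{2\sigma\kappa}\sum_i \tilde y_i^{*T}\Sigma_i^*\tilde y_i^* \;\geq\; \tfrac{\lambda}{2\sigma\kappa}\, b^T\Sigma_\kappa b,
\]
delicate because $y_c = b+\tilde y_c$ produces the indefinite cross term $2 b^T L \tilde y_c$ and because $\tilde y_c$ is coupled to the star variables by $\sigma_i\tilde y^{(i)} = -\sum_j L_{i,j}\tilde y^{(i,j)}$. My plan is to kill the $\mathds{1}$-component of $\tilde y_c$ using $L\mathds{1}=0$, then apply $(b+\tilde y_c^\perp)^T L (b+\tilde y_c^\perp) \geq \lambda\|b+\tilde y_c^\perp\|^2$ (valid because $b\perp_{\Sigma_\kappa}\mathds{1}$, which reduces to Euclidean orthogonality up to the routine bound $\Sigma_\kappa\preceq\sigma\kappa I$ used to absorb the inhomogeneous case), and to bound $\sigma_i(\tilde y^{(i)})^2$ by $\tilde y_i^{*T}\Sigma_i^*\tilde y_i^*$ via the Cauchy-Schwarz consequence of the star orthogonality. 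After expanding the square and dropping nonnegative leftovers, the inequality will reduce to nonnegativity of an explicit $2\times 2$ quadratic form in the scalars $(\|b\|,\|\tilde y_c^\perp\|)$ whose determinant comes out to $(\kappa-1)/4 \geq 0$; this small computation is what forces the denominator $2\sigma\kappa$ in the claim and shows that the factor of $2$ is essentially tight for this line of argument.
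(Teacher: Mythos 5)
The first three steps of your plan are sound and genuinely different from the paper's Schur-complement argument: the Rayleigh-quotient reformulation, the per-star centering with the clean split of $y^T\Sigma y$ and $y^TAA^Ty$, and the computation of the star pencil spectrum ($0$, $\lambda/\sigma$ once, $\lambda/(\sigma\kappa_i)$ with multiplicity $m-1$) are all correct. The problem is the fourth step: the ``residual center-side inequality''
\[
y_c^T L y_c + \tfrac{\lambda}{2\sigma\kappa}\sum_i \tilde y_i^{*T}\Sigma_i^*\tilde y_i^* \;\geq\; \tfrac{\lambda}{2\sigma\kappa}\, b^T\Sigma_\kappa b
\]
is \emph{false} once $\kappa>3/2$. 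Concretely, take $n=2$ with a single communication edge ($\mu_0^2=1/2$, so $\lambda=1$), homogeneous data ($\sigma_i=\sigma$, $\kappa_i=\kappa=2$), $b=(1,-1)$, $c=(\tilde y^{(1)},\tilde y^{(2)})=(-2/3,2/3)$, and the minimizing choice $\tilde y^{(i,j)}=-\sigma c_i/(mL_0)$ constant over $j$. Then the left side equals $2/3$ while the right side equals $1$. The flaw is that you pass to the worst-case star eigenvalue $\lambda/(\sigma\kappa_i)$ uniformly, subtract $\mu_\star$, and keep only ``half the star mass.'' But the eigenvector achieving $\lambda/(\sigma\kappa_i)$ has $v_0=0$, hence $c_i=0$; whenever the center variable $c_i$ is nonzero, the component of $\tilde y_i^*$ that actually couples to $y_c$ lies in the $\lambda/\sigma$ eigenspace, which is worth $\kappa_i$ times more. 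Collapsing both to the smaller value throws away exactly the amount needed to beat the cross term $2b^TLc$, so no choice of $2\times 2$ completion can save the inequality as written; a correct determinant calculation at this stage comes out to $\frac{3-2\kappa}{4(\kappa-1)}$, not $(\kappa-1)/4$.

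The repair is to split $\tilde y_i^*=u_i+w_i$ into the two nonzero eigenspaces of the pencil $(L_i^*,\Sigma_i^*)$, observe that $c_i$ depends only on $u_i$, bound $w_i$ against its own eigenvalue $\lambda/(\sigma\kappa_i)$ and drop it, and keep the full $\lambda/\sigma$ weight on $u_i^T\Sigma_i^*u_i=\frac{\sigma_i\kappa_i}{\kappa_i-1}c_i^2$. With this refinement the homogeneous $2\times 2$ form has determinant $\frac{1}{4(\kappa-1)}\geq 0$ and the argument closes. But notice this is exactly the diagonal Schur-complement reduction the paper performs: the paper computes the per-node rational expression $\Delta_\lambda$ and reduces to a scalar quadratic $\lambda^2-\lambda(\nu+\alpha_i\kappa_i)+\alpha_i\nu=0$ per node, which automatically tracks the two star modes and their coupling to $L$. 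So while your variational route is a legitimate alternative framing — and the paper's determinant manipulations can be opaque — completing it correctly forces you to rediscover the same two-mode decoupling; the intermediate lemma as you stated it is simply not true.
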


\begin{proof}
All non-zero singular values of a matrix $M^TM$ are also singular values of the matrix $MM^T$, and so $\lambda_{\min}^+(A^T\Sigma^{-1} A) = \lambda_{\min}^+ \left(\Sigma^{-1/2} A A^T \Sigma^{-1/2}\right)$. We denote $\tilde{L} = \Sigma^{-1/2} A A^T \Sigma^{-1/2}$.

Then, we denote $\mu_{ij}^2$ the weight of the \emph{virtual edge} $(i,j)$ and $M$ the diagonal matrix of size $nm$ which is such that ${e^{(i,j)}}^T M e^{(i,j)} = \mu_{ij}^2$ for all virtual nodes. $M_{n,m}$ is the matrix of size $n \times nm$ such that $(M_{n,m} e_{ij})^{(i)} = \mu_{ij}^2$ for all virtual edges $(i,j)$ and all other entries are equal to 0. Finally, $\tilde{S}$ is the diagonal matrix of size $n$ such that $\tilde{S}_i = \sum_{j =1}^m \mu_{ij}^2$. All \emph{communication nodes} are linked by the true graph, whereas all \emph{virtual nodes} are linked to their corresponding communication node. Then, if we denote $L$ the Laplacian matrix of the original true graph, the rescaled Laplacian matrix of the augmented graph writes:

\begin{equation}
\tilde{L} = \Sigma^{-1/2}\begin{pmatrix} L + \tilde{S} & - M_{n,m} \\ - M_{n,m}^T & M \end{pmatrix}\Sigma^{-1/2}.
\end{equation}

Therefore, if we split $\Sigma$ into two diagonal blocks $D_1$ (for the communication nodes) and $D_2$ (for the computation nodes) and apply the block determinant formula, we obtain: 

\begin{align*}
    &\det(D_1^{-\frac{1}{2}}A^TA D_1^{-\frac{1}{2}} - \lambda Id) = \det(D_2^{-1}M - \lambda Id)\\
    &\times \det(D_1^{-1/2} L D_1^{-1/2} + D_1^{-1} \tilde{S} - \lambda Id -\\
    &D_1^{-\frac{1}{2}} M_{n,m} D_2^{-\frac{1}{2}} \left(D_2^{-1} M - \lambda Id\right)^{-1} D_2^{-\frac{1}{2}} M_{n,m}^T D_1^{-\frac{1}{2}}).
\end{align*}

Then, we choose $M$ such that $D_2^{-1} M = diag(\alpha_1, ..., \alpha_n)$, meaning that $\mu_{ij}^2 = \alpha_i L_{i,j}$. With this choice, $D_1^{-\frac{1}{2}} M_{n,m} D_2^{-\frac{1}{2}} \left(D_2^{-1} M - \lambda Id\right)^{-1} D_2^{-\frac{1}{2}} M_{n,m}^T D_1^{-\frac{1}{2}}$ is a diagonal matrix where the $i$-th coefficient is equal to

\begin{equation}
    \frac{1}{\sigma_i}\sum_{j=1}^m \mu_{ij}^4 \frac{1}{\mu_{ij}^2 - L_{i,j} \lambda} = \kappa_i \frac{\alpha^2_i}{\alpha_i - \lambda},
\end{equation}

where $S_i = \sum_{j =1}^m L_{i,j}$ and $\kappa_i = 1 + \frac{S_i}{\sigma_i}$. On the other hand, $D_1^{-1}S$ is also a diagonal matrix where the $i$-th entry is equal to $\alpha \kappa_i$. Therefore, the solutions of $det(\tilde{L} - \lambda Id) = 0$ are $\lambda = \alpha_i$ and the solutions of:
\begin{equation}
    \label{eq:det_deltalambda}
    det(D_1^{-1/2} L D_1^{-1/2} - \Delta_\lambda) = 0
\end{equation}

with $\Delta_\lambda$ a diagonal matrix such that $(\Delta_\lambda)_{i,i} = \left(\frac{1}{\sigma_i}\sum_{j=1}^{m} \mu_{ij}^4\left( \frac{1}{\mu_{ij}^2 - L_{i,j} \lambda} - 1  \right) + \lambda \right)$. All the entries of $\Delta_\lambda$ grow with $\lambda$, meaning that the smallest solution $\lambda^*$ of Equation~\eqref{eq:det_deltalambda} is lower bounded by the smallest solution of:

\begin{equation}
    \det\left(\frac{\lambda_{\min}^+(L)}{\sigma}Id - \Delta_\lambda\right).
\end{equation}

If $\alpha_i > 0$ and we choose $\lambda \neq \alpha_i$, then the other singular values of $\tilde{L}$ are lower bounded by the minimum over all $i$ of the solution of:

\begin{equation}
    \nu - \left(\frac{1}{\sigma_i}\sum_{j=1}^{m} \mu_{ij}^4\left( \frac{1}{\mu_{ij}^2 - L_{i,j} \lambda} - 1  \right) + \lambda \right) = 0,
\end{equation}

where $\nu = \frac{\lambda_{\min}^+(L)}{\sigma}$ which, with our choice of $\mu_{ij}$ gives:

\begin{equation}
    \nu - \left(\alpha_i \frac{S_i}{\sigma_i}\left(\frac{\alpha_i}{\alpha_i - \lambda } - 1\right) + \lambda \right) = 0,
\end{equation}

that can be rewritten:
\begin{equation}
    \lambda^2 - \lambda\left(\nu + \alpha_i \kappa_i\right) + \alpha_i \nu  = 0.
\end{equation}

Therefore, noting $\lambda_i^*$ the smallest solution of this system for a given $i$, we get:

\begin{equation}
    \lambda_i^* \geq \frac{1}{2}\left(\alpha_i\kappa_i + \nu - \sqrt{\left(\nu + \alpha_i\kappa_i \right)^2 - 4\nu \alpha_i}\right).
\end{equation}

In particular, we choose $\alpha_i = \frac{\nu}{\kappa_i}$ and use that $\sqrt{1 - x} \leq 1 - \frac{x}{2}$ to show:
\begin{equation}
    \lambda_i^*  \geq \nu \left(1 - \sqrt{1 - \kappa_i^{-1}}\right)
    \geq \frac{\nu}{2\kappa_i}.
\end{equation}

The other eigenvalues are given by the values that zero out diagonal terms of the lower right corner. These are the solutions of $\mu_{ij}^2 = L_{i,j} \lambda$, yielding $\lambda = \alpha_i \geq \lambda_i^*$. Therefore, $\lambda_{\min}^+(\tilde{L}) \geq \min_i \lambda_i^*$, which finishes the proof.

\end{proof} 

\subsection{Eigengap of the augmented graph}
\label{app:gamma_tilde}
This section aims as justifying the $\tilde{\gamma}$ notation. Recall that it is defined such that $\tilde{\gamma} = \min_{(k,\ell) \in E^{\rm comm}} \frac{\lambda_{\min}^+(L)n^2}{\mu_{k\ell}^2 e_{k\ell}^TA^\dagger Ae_{k\ell}E^2}$. We show in this section that for any given family of regular graphs, there exists a constant $C_\gamma$ independent of the size of the graph such that $C_\gamma \tilde{\gamma} \geq \gamma$. Matrix $A$ depends on $\mu$, and we consider in this section that $\mu_{k\ell}^2 = \mu_0$ for all communication edges $(k,\ell)$. Similar results can be obtained when $\mu$ is heterogeneous.

\paragraph{Regular graphs.} We say that a family of graph is regular if there exists $C_\gamma > 0$
such that $e_{k\ell}^TA^\dagger Ae_{k\ell} \leq C_\gamma \frac{n}{E}$ for any $n > 2$.

Recall that $E$ is the number of edges (usually constrained by the graph family and the number of nodes), and $e_{k\ell}^TA^\dagger Ae_{k\ell}$ is the effective resistance of edge $(k, \ell)$. 
This assumption seems a bit technical but it simply requires that all edges contribute equally to the connectivity of the graph, and therefore is related to how symmetric the graph is. In particular, it is verified with $C_\gamma = 1$ for any completely symmetric graph, such as the complete graph or the ring. Since $e_{k\ell}^T A^\dagger A e_{k\ell} \leq 1$, it is also satisfied any time the ratio $n / E$ is bounded below, and in particular for the grid, the hypercube, or any graph with bounded degree. Under these assumptions, and for any communication edge $(k,\ell)$: 
$$\frac{\lambda_{\min}^+(L)n^2}{\mu_{k\ell}^2 e_{k\ell}^TA^\dagger Ae_{k\ell}E^2} \geq \frac{\gamma}{C_\gamma} \frac{\lambda_{\max}(L)n}{\mu_{k\ell}^2 E} \geq \frac{\gamma}{C_\gamma} \frac{{\rm Trace}(L)}{\mu_{0}^2 E} = 2 \frac{\gamma}{C_\gamma}.$$
Here, we used the fact that ${\rm Trace}(L) = 2 \mu_0^2 E$, which can be deduced directly from the form of $A$ (each edge has weight $\mu_0^2$ and contributes two times, one for each end). We conclude by using the fact that since the previous inequalities are true for any $(k,\ell)\in E^{\rm comm}$, it is in particular true for $\tilde{\gamma}$.

\subsection{Communication rate and local rate}
We know that the rate of ADFS can be written as the minimum of a given quantity over all edges of the graph. This quantity will be very different whether we consider communication edges or virtual edges. In this section, we give lower bounds for each type of edge, and show that we can trade one for the other by adjusting the probability of communication. 

\begin{lemma}
With the choice of parameters of Theorem~\ref{thm:adfs_speed}, parameter $\rho$ satisfies:
\begin{equation}
    \rho \geq \frac{1}{\sqrt{2} n}\min\left( p_{\rm comm} \Delta_p \sqrt{\frac{\tilde{\gamma}}{2 \kappa}}, p_{\rm comp}  \frac{\sqrt{r_\kappa}}{S_{\rm comp}} \right).
\end{equation}
\end{lemma}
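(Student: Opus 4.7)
The plan is to plug the choices of $\mu_{k\ell}$ and $p_{k\ell}$ from Theorem~\ref{thm:adfs_speed} into the upper bound on $\rho^2$ provided by Theorem~\ref{thm:rate_adfs}, namely
\[
\rho^2 \;\leq\; \min_{k\ell}\; \frac{\lambda_{\min}^+(A^T\Sigma^{-1}A)}{\Sigma_{kk}^{-1}+\Sigma_{\ell\ell}^{-1}}\cdot\frac{p_{k\ell}^2}{\mu_{k\ell}^2 R_{k\ell}},
\]
and to split the minimum into the two natural sub-families of edges (communication edges, virtual edges). The common ingredient to both cases is the eigenvalue bound $\lambda_{\min}^+(A^T\Sigma^{-1}A)\geq \lambda_{\min}^+(L)/(2\sigma\kappa)$ proved in Appendix~\ref{app:algo_perfs}, together with Lemma~\ref{lemma:acrossa} which gives $R_{ij}=1$ on virtual edges.

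For a communication edge $(k,\ell)\in E^{\rm comm}$, both endpoints are centers, so $\Sigma_{kk}^{-1}+\Sigma_{\ell\ell}^{-1}\leq 2/\sigma$; together with $\mu_{k\ell}^2=\tfrac12$ and $p_{k\ell}=p_{\rm comm}/E$, the edge contribution becomes $\sigma\,\lambda_{\min}^+(A^T\Sigma^{-1}A)\,p_{\rm comm}^2/(R_{k\ell}E^2)$. Substituting the eigenvalue bound and then recognizing the factor $\lambda_{\min}^+(L)\,n^2/(\mu_{k\ell}^2 R_{k\ell}E^2)$ exactly as the quantity defining $\tilde\gamma$, I will obtain a lower bound of the form $p_{\rm comm}^2\tilde\gamma/(2\kappa n^2)$ (up to the constants in the definition). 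In the homogeneous setting this yields $p_{\rm comm}\sqrt{\tilde\gamma/(2\kappa)}/(\sqrt{2}n)$; in the general setting the ratio between the smallest and largest $p_{k\ell}/\mu_{k\ell}^2$ across communication edges is precisely the factor encoded by $\Delta_p$.

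For a virtual edge between center $i$ and the virtual node $(i,j)$, I use $R_{ij}=1$, $\Sigma_{kk}^{-1}+\Sigma_{\ell\ell}^{-1}=1/\sigma_i+1/L_{i,j}=(L_{i,j}+\sigma_i)/(\sigma_i L_{i,j})$, together with $\mu_{ij}^2=\lambda_{\min}^+(L)L_{i,j}/(\sigma\kappa_i)$ and $p_{ij}^2=p_{\rm comp}^2(1+L_{i,j}/\sigma_i)/(n^2S_{\rm comp}^2)$. The key algebraic observation is that plugging these choices in makes both $L_{i,j}$ and the factor $(1+L_{i,j}/\sigma_i)=(L_{i,j}+\sigma_i)/\sigma_i$ cancel cleanly with the $\Sigma$-term and with $\mu_{ij}^2$, leaving
\[
\frac{\lambda_{\min}^+(A^T\Sigma^{-1}A)\,p_{\rm comp}^2\,\sigma\kappa_i}{n^2 S_{\rm comp}^2\,\lambda_{\min}^+(L)}.
\]
Applying once more $\lambda_{\min}^+(A^T\Sigma^{-1}A)\geq \lambda_{\min}^+(L)/(2\sigma\kappa)$, the factor $\lambda_{\min}^+(L)$ cancels and I am left with $p_{\rm comp}^2\,\kappa_i/(2\kappa\,n^2 S_{\rm comp}^2)$; the ratio $\kappa_i/\kappa$ is what $r_\kappa$ captures (equal to $1$ in the homogeneous case) and taking square roots produces the second term $p_{\rm comp}\sqrt{r_\kappa}/(\sqrt{2}\,n\,S_{\rm comp})$.

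Taking the minimum of the two bounds and pulling out the common $1/(\sqrt{2}n)$ gives the claim. The only non-routine step is verifying that the $1/\sigma_i+1/L_{i,j}$ factor combines \emph{exactly} with the chosen $p_{ij}$ to leave no residual $L_{i,j}$-dependence; once this cancellation is checked, the rest is substitution. I expect the main bookkeeping difficulty to lie in the heterogeneous case, where $\Delta_p$ must be defined so as to absorb the worst-case ratio between the edge quantities $p_{k\ell}/(\mu_{k\ell}\sqrt{R_{k\ell}})$ on communication edges (versus their averaged value used to define $\tilde\gamma$), and where $r_\kappa=\min_i\kappa_i/\kappa$ controls the heterogeneity of the local stochastic condition numbers.
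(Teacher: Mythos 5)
Your proposal is correct and follows essentially the same route as the paper: split the minimum defining $\rho^2$ into communication and virtual edges, invoke $\lambda_{\min}^+(A^T\Sigma^{-1}A)\geq\lambda_{\min}^+(L)/(2\sigma\kappa)$ and $R_{ij}=1$ on virtual edges, and plug in the parameter choices to observe the exact cancellations you describe (your algebra for both edge families checks out). The only small imprecision is your heuristic gloss on $\Delta_p$ — in the paper it is simply the assumed constant in $p_{k\ell}\geq\Delta_p p_{\rm comm}/|E|$, with $\mu$ held fixed at $1/2$ on communication edges, rather than a ratio involving $\mu_{k\ell}\sqrt{R_{k\ell}}$ or an "averaged" $\tilde\gamma$ — but this does not affect the proof.
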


\begin{proof}
Recall that the rate $\rho$ is defined as:
\begin{equation}
\label{eq:rate_theta}
\rho^2 = \min_{k\ell} \frac{p_{k\ell}^2}{\mu_{k\ell}^2 e_{k\ell}^T A^\dagger A e_{k\ell}} \frac{\lambda_{\min}^+ ( \tilde{L} )}{\sigma_k^{-1} + \sigma_\ell^{-1}}.
\end{equation}

Therefore, for communication edges the rate writes:
\begin{equation}
\rho^2_{\rm comm} \geq \min_{(k,\ell) \in E^{\rm comm}} \left(\frac{1}{\sigma_k} + \frac{1}{\sigma_\ell}\right)^{-1} \frac{p_{k\ell}^2}{\mu_{k\ell}^2e_{k\ell}^TA^\dagger Ae_{k\ell}} \frac{\lambda_{\min}^+ \left( L\right)}{2\sigma  \kappa}.
\end{equation}
If we take $\sigma_k = \sigma$ for all $k$ and $p_{k\ell}^{2} \geq \Delta_p^2 p_{\rm comm}^2 / |E|^2$ (corresponding to a homogeneous case), then we can make $\tilde{\gamma}$ appear to obtain:
\begin{equation}
\label{eq:lb_comm}
\rho^2_{\rm comm} \geq \Delta_p^2 \frac{\tilde{\gamma}}{\kappa} \frac{p_{\rm comm}^2}{4n^2}.
\end{equation}

For ``computation edges", we can write:
\begin{equation}
\rho_{\rm comp}^2 \geq \min_{ij} \frac{p_{ij}^2}{2\left(\sigma_i^{-1} + L_{i,j}^{-1}\right)}\frac{\sigma \kappa_i}{\lambda_{\min}^+ \left(L\right)L_{i,j}} \frac{\lambda_{\min}^+ \left(L\right)}{\sigma \kappa},
\end{equation}

because $e_{ij}^T A^\dagger A e_{ij} = 1$ when $(i,j)$ is a virtual edge (because it is part of no cycle). Since $S_{\rm comp} = \frac{1}{n}\sum_{i=1}^n \sum_{j=1}^{m} \sqrt{1 + L_{i,j}\sigma_i^{-1}}$, this can be rewritten:
\begin{equation}
\rho^2_{\rm comp} \geq \frac{r_\kappa}{2} \frac{p_{\rm comp}^2}{n^2S_{\rm comp}^2}. 
\end{equation}
\end{proof} 

\subsection{Execution time}
Now that we have specified the rate of ADFS (improvement per iteration), we can bound the time needed to reach precision $\varepsilon$ by plugging in the expected time to execute the schedule. In particular, we show in this section Theorem~\ref{thm:adfs_speed_precise}, which is a more precise version of Theorem~\ref{thm:adfs_speed}.

We introduce $\Delta_p$, $r_\kappa$ and $c_\tau$ to quantify how heterogeneous the system is. More specifically, we can define $\sigma = \max_i \sigma_i$, $\kappa_i = 1 + \sigma_i^{-1}\sum_{j =1}^m L_{i,j}$ and $\kappa_s = \max_i \kappa_i$. Since they are not all equal, we introduce $r_\kappa =  \min_i \kappa_i / \kappa_s$. We choose the probabilities of virtual edges, such that $\sum_{j=1}^m p_{ij}$ is constant for all $i$ and such that $p_{ij} = p_{\rm comp} (1 + L_{i,j} \sigma_i^{-1})^{\frac{1}{2}} / (n S_{\rm comp})$ for $S_{\rm comp} = n^{-1}\sum_{i=1}^n\sum_{j=1}^m (1 + L_{i,j} \sigma_i^{-1})^{\frac{1}{2}}$. When $(k,\ell)$ is a communication edge, we further assume that $p_{k\ell} \geq \Delta_{\rm p}  p_{\rm comm} / |E|$ for some constant $\Delta_{\rm p} \leq 1$ and $p^{\max}_{\rm comm} \leq c_\tau p_{\rm comm}$ for some $c_\tau > 0$.
\begin{theorem}
\label{thm:adfs_speed_precise}
We choose $\mu_{k\ell}^2 = \frac{1}{2}$ for communication edges, $\mu_{ij}^2 = \frac{\lambda_{\min}^+(L)}{\sigma \kappa_i}L_{i,j}$ for computation edges and $p_{\rm comm} = \min\Big(\frac{1}{2}, \left(1 + \sqrt{\frac{\tilde{\gamma}}{\kappa_{min}}}S_{\rm comp}\right)^{-1}\Big)$. Then, running Algorithm~\ref{algo:sc_adfs} for $K = \rho^{-1}\log\left(\varepsilon^{-1}\right)$ iterations guarantees $\mathbb{E}\left[\| \theta_K - \theta^\star\|^2 \right] \leq C_0 \varepsilon$, and takes time $T(K)$, with $T(K)$ bounded by:
\begin{equation*}
    T(K) \leq 2C\left(\frac{m + \sqrt{m\kappa_s}}{\sqrt{2 r_\kappa}} + \frac{\left(1 + 4 c_\tau \tau\right)}{\Delta_{\rm p}}\sqrt{\frac{\kappa_s}{\tilde{\gamma}}} \right) \log\left(\frac{1}{\varepsilon}\right)
\end{equation*}
with probability tending to $1$ as $\rho^{-1}\log\left(\varepsilon^{-1}\right) \rightarrow \infty$, where $C$ is the same as in Theorem~\ref{thm:synchronization_cost}.
\end{theorem}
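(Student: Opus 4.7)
The plan is to combine the linear rate from Theorem~\ref{thm:rate_adfs} with the throughput bound of Theorem~\ref{thm:synchronization_cost}, using the choice of $\mu_{k\ell}$ to control $\sigma_A$ and the choice of $p_{\rm comm}$ to balance the contributions of communication and virtual edges to $\rho$. The accuracy half of the statement is obtained by taking $K_\varepsilon = \rho^{-1}\log(\varepsilon^{-1})$ so that $(1-\rho)^{K_\varepsilon}\leq e^{-\rho K_\varepsilon}=\varepsilon$; Theorem~\ref{thm:rate_adfs} then immediately gives $\mathbb{E}[\|\theta_{K_\varepsilon}-\theta^\star\|^2]\leq C_0\varepsilon$. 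The work is to lower-bound $\rho$ under the prescribed parameter choice.

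To bound $\rho$, I would first substitute $\mu_{ij}^2=\lambda_{\min}^+(L)L_{i,j}/(\sigma\kappa_s)$ on virtual edges and $\mu_{k\ell}^2=\tfrac12$ on communication edges into the earlier lemma of Section~\ref{app:algo_perfs}, obtaining $\sigma_A\geq\lambda_{\min}^+(L)/(2\sigma\kappa_s)$. Then I would split the minimum in \eqref{eq:S} between the two types of edges: on communication edges I use $p_{k\ell}\geq\Delta_p p_{\rm comm}/E$, the bound on $\sigma_A$ and the definition of $\tilde\gamma$; on virtual edges I use Lemma~\ref{lemma:acrossa} (which gives $R_{ij}=1$) together with the chosen $\mu_{ij}^2$ and $p_{ij}=p_{\rm comp}\sqrt{1+L_{i,j}/\sigma_i}/(nS_{\rm comp})$. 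This yields the compact lower bound
\[
\rho\;\geq\;\tfrac{1}{\sqrt{2}\,n}\,\min\!\Bigl(\Delta_p\,p_{\rm comm}\sqrt{\tilde\gamma/(2\kappa_s)},\;\;p_{\rm comp}\sqrt{r_\kappa}/S_{\rm comp}\Bigr),
\]
exactly as in the subsequent lemma bounding $\rho$.

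Next I would apply Theorem~\ref{thm:synchronization_cost} to obtain $T(K_\varepsilon)\leq \tfrac{C}{n}(p_{\rm comp}+2c_\tau\tau p_{\rm comm})K_\varepsilon$ with probability tending to one; the hypothesis of that theorem is satisfied because $p_{\rm comm}\leq 1/2$ forces $p_{\rm comp}\geq p_{\rm comm}$, and the $\tau>1$ branch covers the boundary. Substituting the above bound on $\rho^{-1}$ and cancelling the $n$ factors gives
\[
T(K_\varepsilon)\;\leq\;\sqrt{2}\,C\,(p_{\rm comp}+2c_\tau\tau p_{\rm comm})\,\max\!\Bigl(\tfrac{\sqrt{2}}{p_{\rm comm}\Delta_p}\sqrt{\kappa_s/\tilde\gamma},\;\tfrac{S_{\rm comp}}{p_{\rm comp}\sqrt{r_\kappa}}\Bigr)\log(\varepsilon^{-1}).
\]
The choice $p_{\rm comm}=\min(1/2,(1+S_{\rm comp}\sqrt{\tilde\gamma/\kappa_s})^{-1})$ is precisely what makes the two arguments of this max comparable: in the regime $S_{\rm comp}\sqrt{\tilde\gamma/\kappa_s}>1$ we have $p_{\rm comp}=S_{\rm comp}\sqrt{\tilde\gamma/\kappa_s}\,p_{\rm comm}$ so the two arguments collapse to a common value, whereas in the opposite regime $p_{\rm comm}=p_{\rm comp}=1/2$ and the communication branch dominates. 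In each case, a direct computation reduces the bound to a constant multiple of $S_{\rm comp}+2c_\tau\tau\sqrt{\kappa_s/\tilde\gamma}$, and the elementary bound $S_{\rm comp}\leq m+\sqrt{m\kappa_s}$ (from $\sqrt{1+L_{i,j}/\sigma_i}\leq 1+\sqrt{L_{i,j}/\sigma_i}$ followed by Cauchy--Schwarz on $\sum_j\sqrt{L_{i,j}/\sigma_i}$) turns this into the $m+\sqrt{m\kappa_s}$ piece of the claimed bound.

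The main obstacle, as usual when balancing two asymmetric rates against two asymmetric cost sources, lies in the final constant bookkeeping. Expanding $(p_{\rm comp}+2c_\tau\tau p_{\rm comm})\max(\cdot,\cdot)$ via the naive $\max(a,b)\leq a+b$ produces cross-terms, in particular a term proportional to $\tau S_{\rm comp}/\sqrt{r_\kappa}$, that do not appear in the target $2C[(m+\sqrt{m\kappa_s})/\sqrt{2r_\kappa}+(1+4c_\tau\tau)\sqrt{\kappa_s/\tilde\gamma}/\Delta_p]$. Absorbing them requires actively exploiting the regime inequalities defining the choice of $p_{\rm comm}$, which let one trade $S_{\rm comp}$ against $\sqrt{\kappa_s/\tilde\gamma}$ in one direction or the other so that all $m$-dependence is funnelled into the $1/\sqrt{2r_\kappa}$ column and all $\tau$-dependence into the $1/\Delta_p$ column, yielding the stated constants.
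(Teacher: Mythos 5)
Your proposal follows the paper's own argument essentially step for step: lower-bound $\rho$ by splitting the minimum in Theorem~\ref{thm:rate_adfs} between communication and virtual edges, combine with the throughput bound of Theorem~\ref{thm:synchronization_cost}, recognize that the resulting $\max$ of an increasing and a decreasing function of $p_{\rm comm}$ is minimized at the crossover (which is exactly the prescribed $p_{\rm comm}$, handled in two regimes via the outer $\min(\cdot,1/2)$), and finish with $S_{\rm comp}\leq m+\sqrt{m\kappa_s}$. The only cosmetic differences are your Cauchy--Schwarz route to the $S_{\rm comp}$ bound (the paper uses concavity of $\sqrt{\cdot}$) and the phrasing of the final bookkeeping: the paper avoids the cross-term issue you flag by evaluating $T_1$ directly at $p^*$ in one regime and at $1/2$ in the other and then simply adding the two resulting bounds (each valid in its own regime, hence their sum valid unconditionally), which is what produces the $(1+4c_\tau\tau)$ coefficient.
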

In heterogeneous settings, $\sigma_i$ and sampling probabilities may be adapted to recover good guarantees, but this is beyond the scope of this paper. Note that taking computing probabilities exactly equal for all nodes is not necessary to ensure convergence, and only slightly slows down convergence. Indeed, it is always possible to analyze a schedule for which all nodes have exactly the same probability of local update by adding a probability of doing nothing for time $1$ as a local update to the nodes that are chosen less frequently. If we denote $p_{\rm wait}$ the probability that we need to add so that all nodes have the same probability of being selected, then $p_{\rm comp} + p_{\rm comm} = 1 - p_{\rm wait}$ so $\theta_{\rm comp}$ will be slightly smaller for a given $p_{\rm comm}$. The actual algorithm can only be faster so this just gives a rough upper bound on the time to convergence.

\begin{proof}
Using Theorem~\ref{thm:synchronization_cost} on the average time per iteration, we know that as long as $p_{\rm comp} > p_{\rm comm}$, the execution time of the algorithm verifies the following bound for some $C > 0$ with high probability:

\begin{equation}
    T(K) \leq \frac{C}{n} \left(p_{\rm comp} +  2\tau p^{\max}_{\rm comm}\right) K
\end{equation}

Algorithm~\ref{algo:sc_adfs} requires $ - \log(1 / \varepsilon) / \log(1 - \rho)$ iterations to reach error $\varepsilon$. Using that $\log(1 + x) \leq x$ for any $x > -1$, we get that using $K_\varepsilon = \log(1 / \varepsilon) \rho^{-1}$ instead also guarantees to make error less than $\varepsilon$. We now optimize the bound in $\rho$:

\begin{equation}
    \frac{T(K_\varepsilon)}{\log\left(\varepsilon^{-1}\right)} \leq \frac{C}{n\rho} \left(p_{\rm comp} + 2\tau p^{\max}_{\rm comm}\right)
\end{equation}

If we rewrite this in terms of $\rho_{\rm comm}$ and $\rho_{\rm comp}$, we obtain:

\begin{equation}
     \frac{T(K_\varepsilon)}{\log\left(\varepsilon^{-1}\right)} \leq C \max \left(T_1(p_{\rm comm}), T_2(p_{\rm comm})\right)
\end{equation}

with 
\begin{equation}
    T_1(p_{\rm comm}) = \frac{1}{n \rho_{\rm comm}}(p_{\rm comp} + 2c_\tau\tau p_{\rm comm}) = \frac{2}{\Delta_p}\left(2c_\tau \tau - 1 + \frac{1}{p_{\rm comm}}\right)\sqrt{\frac{\kappa}{\tilde{\gamma}}}
\end{equation}

and 
\begin{equation}
    T_2(p_{\rm comm}) = S_{\rm comp}\sqrt{\frac{2}{r_\kappa}}  \left( \frac{1 + (2c_\tau \tau - 1)p_{\rm comm}}{1 - p_{\rm comm}}\right) = S_{\rm comp}\sqrt{\frac{2}{r_\kappa}} \left(1 + 2\tau \frac{p_{\rm comm}}{1 - p_{\rm comm}}\right)
\end{equation}

$T_1$ is a continuous decreasing function of $p_{\rm comm}$ with $T_1 \rightarrow \infty$ when $p_{\rm comm} \rightarrow 0$. Similarly, $T_2$ is a continuous increasing function of $p_{\rm comm}$ such that $p_{\rm comm} \rightarrow \infty$ when $p_{\rm comm} \rightarrow 1$. Therefore, the best upper bound on the execution time is given by taking $p_{\rm comm} = p^*$ where $p^*$ is such that $T_1(p^*) = T_2(p^*)$ and so $\rho_{\rm comm}(p^*) = \rho_{\rm comp}(p^*)$.

\begin{equation}
    \frac{T(K_\varepsilon)}{\log\left(\varepsilon^{-1}\right)} \leq C T_1(p^*)
\end{equation}

Then, $p^*$ can be found by finding the root in $]0, 1[$ of a second degree polynomial. In particular, $p^*$ is the solution of:

\begin{equation}
    p_{\rm comp}^2 = p_{\rm comm}^2 \frac{\tilde{\gamma} \Delta_p^2}{2 \kappa r_\kappa}S_{\rm comp}^2 = (1 - p_{\rm comm})^2
\end{equation}

which leads to $p^* = \left(1 + \sqrt{\frac{\tilde{\gamma}}{2\kappa_{\min}}}\Delta_p S_{\rm comp}\right)^{-1}$.

\begin{align*}
    \frac{T(K_\varepsilon)}{\log\left(\varepsilon^{-1}\right)} &\leq 2 \frac{C}{\Delta_p} \left(2c_\tau \tau - 1 + \frac{1}{p^*}\right) \sqrt{\frac{\kappa}{\tilde{\gamma}}} \\
    &\leq 2C \left(2 \tau \frac{c_\tau}{\Delta_p}\sqrt{\frac{\kappa}{\tilde{\gamma}}} + \frac{1}{\sqrt{2 r_\kappa}} S_{\rm comp}
    \right)
\end{align*}

Finally, we use the concavity of the square root to show that:

\begin{align*}
    S_{\rm comp} &= \frac{1}{n}\sum_{i=1}^n \sum_{j=1}^{m} \sqrt{1 + L_{i,j}\sigma_i^{-1}}\\
    &\leq \frac{1}{n}\sum_{i=1}^n m \sqrt{\sum_{j=1}^{m} \frac{1}{m}\left(1 + L_{i,j}\sigma_i^{-1}\right)}\\
    &\leq \frac{1}{n}\sum_{i=1}^n m \sqrt{1 + \frac{1}{m}(\kappa_i - 1)}\\
    &\leq m + \sqrt{m\kappa}
\end{align*}

Yet, this analysis only works as long as $p^* \leq 1/2$. When this constraint is not respected, we know that: $\tilde{\gamma}S_{\rm comp}^2 \leq 2 \kappa r_\kappa$. In this case, we can simply choose $p_{\rm comp} = p_{\rm comm} = \frac{1}{2}$ and then $\rho_{\rm comm} \leq \rho_{\rm comp}$, so 

\begin{equation}
    \frac{T(K_\varepsilon)}{\log\left(\varepsilon^{-1}\right)} \leq C T_1\left(\frac{1}{2}\right) = 2 \frac{C}{\Delta_p} \left(1 + 2 c_\tau \tau\right)\sqrt{\frac{\kappa}{\tilde{\gamma}}}
\end{equation}

The sum of the two bounds is a valid upper bound in all situations, which finishes the proof.
\end{proof} 

\section{Experimental setting}
\label{app:experimental_setting}
\subsection{Experimental Setting}
We detail in this section the exact experimental setting in which simulations were made. All algorithms used out-of-the-box parameters given by theory. Batch algorithms as well as ESDACD were given the exact $\kappa_b$. The datasets we used are the first million samples of the Higgs dataset (11 million samples and 28 attributes) and the Covtype.binary.scale dataset (581,012 samples and 54 attributes). Both datasets are available at \url{https://www.csie.ntu.edu.tw/~cjlin/libsvmtools/datasets/binary.html}. To obtain the local dataset $X_i \in \mathbb{R}^{m \times d}$ of each node, we drew $m$ samples at random from the base dataset, so that datasets of different nodes may overlap. We used the logistic loss with quadratic regularization, meaning that the function at node~$i$ is:
$$f_i: \theta_i \mapsto \sum_{j=1}^m \log\left(1 + \exp(-l_{i,j} X_{i,j}^T\theta_i)\right) + \frac{\sigma_i}{2}\|\theta_i \|^2,$$
where $l_{i,j} \in \{-1, 1\}$ is the label associated with $X_{i,j}$, the $k$-th sample of node $i$. We chose $m = 10^4$ and $\sigma = 1$ for all simulations. Note that local functions are not normalized (not divided by $m$) so this actually corresponds to a regularization value of $\sigma_i = 10^{-4}$ with usual formulations. Computation delays were chosen constant equal to $1$ and communication delays constant equal to $5$.

As said in the main text, plots are shown for \emph{idealized times} in order to abstract implementation details as well as ensure that reported timings were not impacted by the cluster status (available bandwidth for example).
Note that for ADFS, nodes perform the schedule described in Section~\ref{sec:synch_time} and are considered free to start the next iteration as soon as they send their a gradient as long as they already received the neighbor's gradient (non-blocking send). Note that although Algorithm~\ref{algo:sc_adfs} returns vector $\Sigma^{-1}v_t$ to compute the error, we used the vector $\Sigma^{-1}y_t$ instead. Both have similar asymptotic convergence rates but the error was more stable using $\Sigma^{-1}y_t$. The error that we plot is the average error over all nodes at a given time. More specifically, all nodes compute the error at specific iteration number as $F(\Sigma^{-1}y_t)$. Then, we average all these errors and the time reported is the time at which the last node finishes this iteration.

Similarly to Table~\ref{fig:table_speeds}, we assume that computing the dual gradient of a function $f_i$ is as long as computing $m$ proximal operators of $f_{i,j}$ functions. This greatly benefits to MSDA and ESDACD since in the case of logistic regression, the proximal operator for one sample has no analytic solution but can be efficiently computed as the result of a one-dimensional optimization problem~\citep{shalev2013stochastic}. The inner problem corresponding to computing $\nabla f_i^*$ was solved by performing $500$ steps of accelerated gradient descent. For Point-SAGA, ADFS and DSBA, 1D prox were computed using 5 steps of Newton's method (in one dimension). Both used warm-starts, \emph{i.e.} the initial parameter for these inner problems was the solution for the last time the problem was solved. The step-size $\alpha$ of DSBA was chosen as $1 / (4L_{\max})$ instead of $1 / (24L_{\max})$ where $L_{\max} = \max_{i,j} L_{i,j}$. DSBA was unstable for larger values of $\alpha$.

\subsection{Centralized Algorithms}
In this section, we perform a quick comparison with the centralized algorithm Katyusha~\citep{allen2017katyusha}. We assume that the allreduce communication steps take time $\Delta$ where $\Delta$ is the diameter of the graph. We implement the mini-batch version of this algorithm and set the mini-batch size so that $b = 1 + \Delta \tau$, \emph{i.e.}, the algorithm spends as much time computing as communicating (not counting the full gradient steps). Counting computation time in terms of effective passes over the dataset is slightly unfair to Katyusha that has a cheaper per-example cost. Yet, this is only a (small) constant factor in the case of logistic regression.

\begin{figure*}
\centering
\begin{subfigure}{.33\textwidth}
  \centering
  \includegraphics[width=\linewidth]{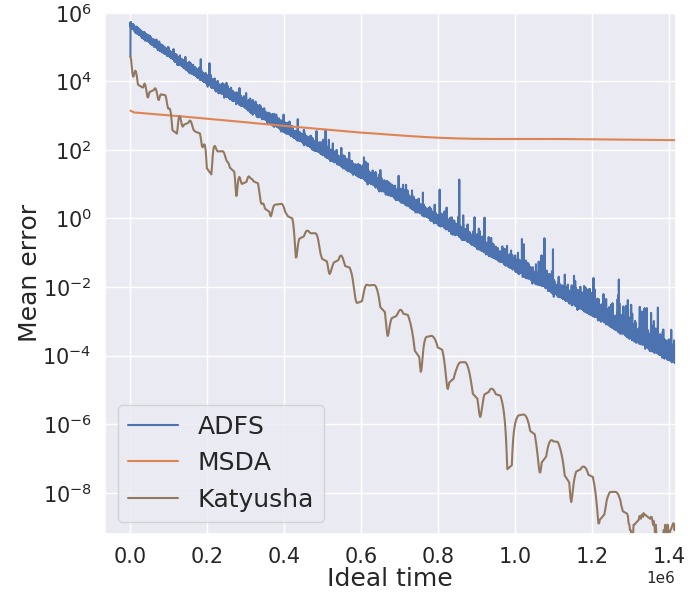}
  \vspace{-15pt}
\caption{Higgs, $\tau=5$}
\label{fig:adfs_kat_tau_5_higgs}
\end{subfigure}%
\begin{subfigure}{.33\textwidth}
  \centering
  \includegraphics[width=\linewidth]{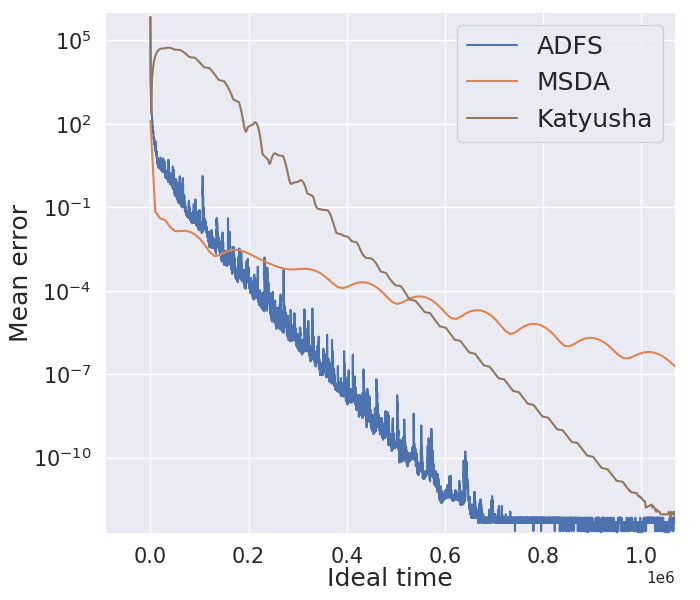}
  \vspace{-15pt}
\caption{Covtype, $\tau=5$}
\label{fig:adfs_kat_tau_5_covtype}
\end{subfigure}%
\begin{subfigure}{.33\textwidth}
  \centering
  \includegraphics[width=\linewidth]{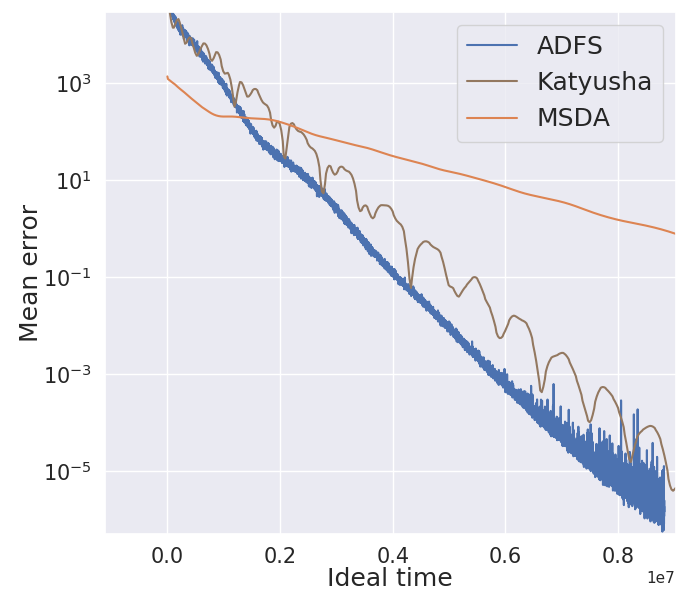}
  \vspace{-15pt}
  \caption{Higgs, $\tau=50$}
\label{fig:adfs_kat_tau_50}
\end{subfigure}
\vspace{5pt}
\caption{Simulations on the logistic regression task with $m = 10^4$ points per node, regularization parameter $\sigma=1$ on grid networks of size $100$.}
\label{fig:katyusha_adfs_simulations}
\end{figure*}
We observe on Figures~\ref{fig:adfs_kat_tau_5_higgs} and \ref{fig:adfs_kat_tau_5_covtype} that Katyusha and ADFS have comparable rates when $\tau = 5$. This is mainly due to the fact that communications are quite fast so the effective mini-batch size is $9100$ in this case (diameter of the graph is $18$ so $91$ per node), which is quite small compared to the $10^6$ total samples. Figure~\ref{fig:adfs_kat_tau_50} shows that ADFS can outperform Katyusha on the Higgs dataset (on which it was slower when taking $\tau=5$) when delays are big ($\tau = 50)$. Indeed, the effective batch size in this case is $91000$, which is about $10\%$ of the dataset and so Katyusha does not take full advantage of the stochastic optimization speedup. Yet, it is still significantly faster than MSDA. Note that in the case of $\tau = 50$, we set $p_{\rm comm}$ such that $\tau p_{\rm comm} = p_{\rm comp}$. This choice slightly reduced the number communications and led to a faster algorithm by reducing communication time and synchronization barriers. Overall, we see that, contrary to existing decentralized methods, ADFS can be competitive with a distributed implementation of Katyusha, especially when delays are high. Yet, a more detailed study reporting actual computing times with fully optimized implementations would be needed to compare the algorithms further. Indeed, some simulation choices favored ADFS (normalized time, neglecting overhead induced by the prox), whereas other favored Katyusha (constant delays, homogeneous setting).

More fundamentally, Katyusha and ADFS are based on two distinct distribution paradigms. On the one hand, centralized algorithms use less noisy gradients because they have an effective mini-batch size of at least $n$. This grants them linear speedup given that the batch size is small enough. Yet, the batch size usually has to be quite high because it needs to grow linearly with the communication time and the diameter of the graph in order to avoid spending more time communicating than computing so centralized approaches are not necessarily the best option on high-latency networks. On the other hand, decentralized algorithms such as ADFS can work with very small batches but they do not get the mini-batch noise reduction from computing on $n$ nodes in parallel the way Katyusha does. Indeed, similarly to ``Local-SGD"~\cite{lin2018don,patel2019communication} approaches, each node locally runs an accelerated variance-reduced algorithm. This confirms that decentralized algorithms, and in particular ADFS, can be well-suited for distributed stochastic optimization with delays.

\subsection{Code}
A Python implementation of ADFS is given in supplementary material. The goal of this code is to show how to implement ADFS and encourage its use as a baseline. The code implements ADFS to solve the Logistic Regression problem on a 2D grid. It generates a synthetic binary classification dataset. Our implementation leverages the fact that Logistic Regression is a linear model to only store 2 scalars per virtual node instead of 2 full vectors, thus showing how to use sparse updates. The code is not optimized and not intended to be particularly fast, but rather to show how to go from the pseudo-code in Algorithm~\ref{algo:sc_adfs} to an actual implementation.

\end{document}